\pdfoutput=1
\RequirePackage{ifpdf}
\ifpdf 
\documentclass[pdftex]{sigma}
\else
\documentclass{sigma}
\fi

\numberwithin{equation}{section}

\newtheorem{Theorem}{Theorem}[section]
\newtheorem*{Theorem*}{Theorem}

\newtheorem{Lemma}[Theorem]{Lemma}
\newtheorem{Proposition}[Theorem]{Proposition}
 { \theoremstyle{definition}
\newtheorem{Definition}[Theorem]{Definition}

\newtheorem{Remark}[Theorem]{Remark} }
\def\i{{\rm i}}


\begin{document}

\renewcommand{\thefootnote}{}

\newcommand{\arXivNumber}{2311.07195}

\renewcommand{\PaperNumber}{056}

\FirstPageHeading

\ShortArticleName{Talbot Effect for the Manakov System on the Torus}

\ArticleName{Talbot Effect for the Manakov System on the Torus\footnote{This paper is a~contribution to the Special Issue on Symmetry, Invariants, and their Applications in honor of Peter J.~Olver. The~full collection is available at \href{https://www.emis.de/journals/SIGMA/Olver.html}{https://www.emis.de/journals/SIGMA/Olver.html}}}

\Author{Zihan YIN~$^{\rm a}$, Jing KANG~$^{\rm a}$, Xiaochuan LIU~$^{\rm b}$ and Changzheng QU~$^{\rm c}$}

\AuthorNameForHeading{Z.~Yin, J.~Kang, X.~Liu and C.~Qu}

\Address{$^{\rm a}$~Center for Nonlinear Studies and School of Mathematics, Northwest University,\\
\hphantom{$^{\rm b}$}~Xi'an 710069, P.R.~China}
\EmailD{\href{mailto:yinzihan@stumail.nwu.edu.cn}{yinzihan@stumail.nwu.edu.cn}, \href{mailto:jingkang@nwu.edu.cn}{jingkang@nwu.edu.cn}}

\Address{$^{\rm b}$~School of Mathematics and Statistics, Xi'an Jiaotong University,\\
\hphantom{$^{\rm b}$}~Xi'an 710049, P.R.~China}
\EmailD{\href{mailto:liuxiaochuan@mail.xjtu.edu.cn}{liuxiaochuan@mail.xjtu.edu.cn}}

\Address{$^{\rm c}$~Center for Nonlinear Studies and Department of Mathematics, Ningbo University,\\
\hphantom{$^{\rm c}$}~Ningbo 315211, P.R.~China}
\EmailD{\href{mailto:quchangzheng@nbu.edu.cn}{quchangzheng@nbu.edu.cn}}

\ArticleDates{Received November 13, 2023, in final form June 17, 2024; Published online June 25, 2024}

\Abstract{In this paper, the Talbot effect for the multi-component linear and nonlinear systems of the dispersive evolution equations on a bounded interval subject to periodic boundary conditions and discontinuous initial profiles is investigated. Firstly, for a class of two-component linear systems satisfying the dispersive quantization conditions, we discuss the fractal solutions at irrational times. Next, the investigation to nonlinear regime is extended, we prove that, for the concrete example of the Manakov system, the solutions of the corresponding periodic initial-boundary value problem subject to initial data of bounded variation are continuous but nowhere differentiable fractal-like curve with Minkowski dimension $3/2$ at irrational times. Finally, numerical experiments for the periodic initial-boundary value problem of the Manakov system, are used to justify how such effects persist into the multi-component nonlinear regime. Furthermore, it is shown in the nonlinear multi-component regime that the interplay of different components may induce subtle different qualitative profile between the jump discontinuities, especially in the case that two nonlinearly coupled components start with different initial profile.}

\Keywords{Talbot effect; dispersive fractalization; dispersive quantization; multi-component dispersive equation; Manakov system}

\Classification{37K55; 35Q51}

\begin{flushright}
\begin{minipage}{60mm}
\it Dedicated to Professor Peter Olver\\
 on the occasion of his 70th birthday
\end{minipage}
\end{flushright}

\renewcommand{\thefootnote}{\arabic{footnote}}
\setcounter{footnote}{0}

\section{Introduction}

This paper is concerned with the Talbot effect of the Manakov system on the torus
\begin{gather}
 \i u_t+u_{xx}+\big(\alpha |u|^2+\beta |v|^2\big)u=0,\nonumber\\
 \i v_t+v_{xx}+\big(\beta |u|^2+\gamma |v|^2\big)v=0,\qquad \alpha, \beta, \gamma\in \mathbb{R},\label{mana}
\end{gather}
subject to the periodic boundary condition, and the initial conditions $u(0, x)=f(x)$, $v(0, x)=g(x)$, with $f(x)$ and $g(x)$ being of bounded variation. The linearization of the Manakov system~\eqref{mana} is the linear free space Schr\"{o}dinger equation
\begin{equation}\label{ls}
\i u_t+u_{xx}=0,
\end{equation}
which is arguably the simplest dispersive (complex-valued) partial differential equation, possessing a quadratic dispersive relation $\omega(k)=k^2$. For simplicity, from hereon we will usually refer to~\eqref{ls} as the ``linear Schr\"{o}dinger equation''. It also arises as the linearization of a variety of important nonlinear systems. The most notably is the nonlinear Schr\"{o}dinger (NLS) equation
\begin{equation*}
\i u_t+u_{xx}+ |u|^2u=0,
\end{equation*}
which arises in nonlinear fibre optics \cite{HT1, HT2} as well as in the modulation theory for water waves~\cite{Za}.
Its integrability was established by Zakharov and Shabat in \cite{ZSh}. The Manakov system~\eqref{mana} is a prototypical model of two-component extension of the NLS equation, which has been derived by Manakov \cite{Man} in nonlinear optics, in particular, in the problem of interaction of waves with differential polarizations \cite{ZB}, and shown to be integrable by Zakharov and Schulman~\cite{ZSc}.

In the early 1990's, Michael Berry and his collaborators \cite{Ber, BK, BMS} discovered that the time evolution of rough initial data on periodic domains through the linear Schr\"{o}dinger equation exhibits radically different behavior depending upon whether the time is a rational or irrational multiple of the length of the space interval. More precisely, given a step function as initial condition, one finds that the resulting solution to the corresponding periodic initial-boundary value problem, also known as the periodic Riemann problem \cite{Whi}, exhibits the phenomenon of dispersive quantization, being piecewise constants at rational times. Whereas, at other ``irrational times'', the solution turns out to be dispersive fractalization, being continuous but nowhere differentiable function with graph of a specific fractal dimension. Berry named this striking dichotomy phenomenon the Talbot effect~\cite{Ber, BK, BMS}, after a optical experiment~\cite{Tal}, conducted in 1836 by William Henry Fox Talbot. Rigorous analytical results and estimates justifying the Talbot effect can be found in the work of Kapitanski and Rodnianski \cite{KR, Rod}, Oskolkov \cite{Osk92, Osk98}, and Taylor \cite{Tay}.

In \cite{CO12, Olv10}, the same Talbot effect of dispersive quantization and fractalization was shown to appear in general periodic linear dispersive equations possessing an ``integral polynomial'' (a~polynomial with integer coefficients) dispersion relation, which included the prototypical linear Korteweg--de Vries (KdV) equation $u_t+u_{xxx}=0$ and the linear Schr\"{o}dinger equation~\eqref{ls}.
It was shown that, a linear dispersive equation admitting a polynomial dispersion relation and subject to periodic boundary conditions will exhibit the revival phenomenon at each rational time, which means that the fundamental solution, i.e., that induced by a delta function initial condition, localizes into a finite linear combination of delta functions. This has the remarkable consequence that the solution, to any initial value problem, at rational times is a finite linear combination of translates of the initial data and hence its value at any point on the periodic domain depends only upon the initial value.
The term ``revival'' is based on the experimentally observed phenomenon of quantum revival \cite{BMS, VVS}, in which an electron that is initially concentrated near a single location of its orbital shell is re-concentrated near a finite number of orbital locations at certain times.
In \cite{OSS}, the revival phenomenon for the linear free space Schr\"{o}dinger equation subject to pseudo-periodic boundary conditions was investigated, see also \cite{BFP} for the same model and for the quasi-periodic linear KdV equation. In \cite{BOPS}, a more general revival phenomenon, that produces dispersively quantized cusped solutions of the periodic Riemann problem for three linear integro-differential equations, including the Benjamin--Ono equation, the intermediate long wave equation and the Smith equation were studied.

Motivated by these linear results, in \cite{CO14}, the authors presented numerical simulations, based on the operator splitting methods, of the periodic Riemann problems for the NLS, KdV and modified KdV (mKdV) equations with step function initial data and periodic boundary conditions. It turns out that the Talbot effect of dispersive quantization and fractalization appears in these nonlinear dispersive evolution equations as well. Following a different line of enquiry, Erdo\u{g}an, Tzirakis and their collaborators established rigorous results on the fractalization for the nonlinear equations at a dense set of times. Quantifying the irrational time fractalization in terms of the estimate on the fractal dimension, their results, on the one hand extend the results of Oskolkov and Rodnianski to a class of nonlinear integer polynomial dispersive equations subject to initial data of bounded variation, and, on the other hand, confirm the numerical observations of fractalization in \cite{CO14}.
Erdo\u{g}an and Tzirakis studied the cubic NLS and KdV equations on a periodic domain with initial data of bounded variation in \cite{ET13-nls} and \cite{ET13-kdv}, respectively. Subsequently, together with Chousionis, they obtained some results on the Minkowski dimension of the fractalization profiles for dispersive linear partial differential equations with monomial dispersion relation \cite{CET}. We refer the reader to the survey texts \cite{ES19, ET16} for irrational time fractalization results. See also the recent survey \cite{Smi}.

To date, the rigorous statement and proofs for the dispersive fractalization and quantization, or more general revival phenomena have almost all concentrated on the scalar equations. More recently, these dichotomy phenomena were shown to extend to linear multi-component dispersive equations, see \cite{YKQ}. For a class of two-component linear systems of dispersive evolution equations, the dispersive quantization conditions, which may yield quantized structures for step-function initial value at rational times, are provided. Furthermore, the numerical simulations, by means of the Fourier spectral method, of the periodic Riemann problems for the Ito system and a~class of two-component coupled KdV system that arise in the models from the shallow stratified liquid were presented. The numerical computations suggest that at rational times the numerical solution profiles of those systems whose associated linear part admits the dispersive quantization conditions will exhibit the approximate quantization~-- a finite number of jump discontinuities between which it is no longer constant, but appear to be smooth curves of exotic species. On the other hand, it was numerically observed that dispersive fractalization occurs for all cases when the quantization property holds.

Inspired by these observations, in the present paper, we are led to study the fractal solutions at irrational times in the multi-component regime. The first topic of this paper is to study the phenomenon of dispersive fractalization at irrational times for the two-component linear systems satisfying the dispersive quantization conditions given in \cite[Theorem~2.2]{YKQ}; see also Theorem~\ref{thm-dq-sic} below, and the main result is given in Theorem \ref{thm-fra-lin} in Section~\ref{s2}. Next, based on the estimate in linear setting, we concentrate our study on the concrete example of the Manakov system~\eqref{mana}, and prove that for almost all times, the solutions of the corresponding periodic initial-boundary value problem subject to initial data of bounded variation are continuous but nowhere differentiable fractal-like curve with Minkowski dimension~$3/2$. The main results and its proof is given in Section~\ref{s3}; see Theorem~\ref{thm-mana}. It's worth mentioning that, compared with the scalar NLS equation, the periodic Fourier transform of the nonlinear coupled term in the Manakov system will bring about a linear coupled term as well. This means that the method used in \cite{ET13-kdv, ET13-nls} will fail in the present multi-component setting.
In view of this, we are led to exploit a~new method in the proof of Theorem \ref{thm-mana}, which is different from the technique has been used for the scalar NLS equation and KdV equation. This technique is effective for the multi-component system involving the coupling of different components.
Finally, with the aim to give a more intuitive description of the qualitative features of the solutions, we present numerical simulations, based on the Fourier spectral method, of the periodic initial-boundary value problems for the Manakov system in Section~\ref{s4}. It is shown that in the nonlinear multi-component regime, the interrelationship between different components will induce subtle different qualitative profile between the jump discontinuities, especially in the case that two nonlinearly coupled components~$u$ and~$v$ start with different initial profile.
Rigorously establishing such observed effects in the Manakov system, as well as other multi-component system, for instance the Ito system, warrant further investigation in the future.

{\bf Notation.} Throughout this paper, $\mathbb{T}$ will denote the torus $\mathbb{R}/2\pi \mathbb{Z}$. We define $\langle \cdot \rangle=\sqrt{1+\vert \cdot \vert^2}$. We write $A \lesssim B$ to denote that there exists an absolute constant $c>0$ such that $A\leq cB$, and let $A \lesssim B^{s-}$ denote $A \lesssim B^{s-\epsilon}$ for any $\epsilon>0$. We further define the Fourier sequence of a $2\pi$-periodic $L^2$-function $u$ defined on $\mathbb{T}$ as
 \[
 \hat{u}(k)=\frac{1}{2\pi}\int_{\mathbb{T}} u(x) {\rm e}^{-\i k x}\,{\rm d}x,\qquad k\in\mathbb{Z}.
 \]
 With this normalization, one has
 \[
 u(x)=\sum_{k}\, {\rm e}^{\i k x}\hat{u}(k).
 \]
 In addition, the results for the fractal solutions $u(t, x)$ of the indicated systems will follow from the estimate in the Besov space $B_{p,\infty}^{s}$ defined via the norm
$
\Vert u\Vert_{B_{p,\infty}^{s}}=\sup_{j \geq 0} 2^{sj} \Vert P_j u\Vert_{L^p},
$
with~$P_j$ being a Littlewood--Paley projection on to the frequencies $\approx 2^j$, the Sobolev space $H^s(\mathbb{T})$ defined as a subspace of $L^2$ via the norm
$
\Vert u\Vert_{H^s}=\sqrt{ \sum_k \langle k\rangle^{2s} \vert \hat{u} (k) \vert ^2},
$
the space $X^{s, b}$ introduced by Bourgain \cite{Bou93-nls, Bou93-kdv, Bou98}, which is defined via the norm
$
\Vert u\Vert_{X^{s, b}} =\Vert \langle k \rangle^{s} \langle\tau-\varphi(k)\rangle^b \hat{u}(\tau, k) \Vert_{L_\tau^2 l_k^2},
$ with~$\varphi(k)$ being the dispersive relation of the corresponding system,
as well as the restricted norm
\[\Vert u\Vert_{X_\delta^{s, b}} =\inf\limits_{\tilde{u}=u,\, t\in [-\delta, \delta]}\Vert \tilde{u} \Vert _{X^{s, b}}.
\]

\section{Fractal solutions of the linear multi-component systems}\label{s2}

 In this section, we first recall some basic results concerning the dispersive quantization phenomena of the solutions at rational times for the periodic Riemann problem of the following two-component linear system of dispersive evolution equations on the interval
 $0\leq x \leq 2\pi$
\begin{gather}
u_t=L_1[u]+L_2[v],\nonumber\\
v_t=L_3[u]+L_4[v],\label{lin-sys1}
\end{gather}
in which, $L_j$, $j=1, 2, 3, 4,$ are constant-coefficient differential operators, characterized by their corresponding purely imaginary Fourier transform
\begin{equation}\label{L-phi}
\hat{L}_j(k)=\i \varphi_j(k),\end{equation}
where $\varphi_j(k)$ are real functions of $k$. In \cite{YKQ}, the solutions of the periodic Riemann problem for system~\eqref{lin-sys1}, subject to the unit step function initial datum
 \begin{equation}\label{iv-s}
 u(0, x)=v(0, x)=\sigma(x)=
\begin{cases}
0,& 0\leq x<\pi,\\
1,& \pi<x<2\pi
\end{cases}
\end{equation}
 are obtained based on two cases.
 Denote
\begin{equation}\label{delta-k}
\Delta(k)=(\varphi_1(k)-\varphi_4(k))^2+4\varphi_2(k)\varphi_3(k).
\end{equation}

{\bf Case 1.} $\Delta(k)\geq0$ and $\Delta(k)\neq0$ for all $k\in\mathbb{Z}$.
 Let
\begin{gather*}
\omega_1(k)=-\frac{1}{2}\bigl(\varphi_1(k)+\varphi_4(k)+\sqrt{\Delta(k)}\bigr),\qquad
 \omega_2(k)=-\frac{1}{2}\bigl(\varphi_1(k)+\varphi_4(k)-\sqrt{\Delta(k)}\bigr),\\
\phi(k)=\frac{\varphi_4(k)-\varphi_1(k)+\sqrt{\Delta(k)}}{2\varphi_2(k)},\qquad \psi(k)=\frac{\varphi_4(k)-\varphi_1(k)-\sqrt{\Delta(k)}}{2\varphi_2(k)}.
\end{gather*}

 The (formal) Fourier expressions of the solutions to the periodic Riemann problem for system~\eqref{lin-sys1} are given by
\begin{gather}
u(t, x)\sim\sum_{k=-\infty}^{+\infty}\bigl(a_k^1{\rm e}^{\i(kx-\omega_1(k) t)}+a_k^2{\rm e}^{\i(kx-\omega_2(k) t)}\bigr),\nonumber\\
v(t, x)\sim\sum_{k=-\infty}^{+\infty}\bigl(\phi(k)a_k^1{\rm e}^{\i(kx-\omega_1(k)t)}+\psi(k)a_k^2{\rm e}^{\i(kx-\omega_2(k)t)}\bigr),\label{uv-s-1}
\end{gather}
where
 \begin{equation*}
a_k^1=\frac{c_k\varphi_2(k)}{\sqrt{\Delta(k)}} (1-\psi(k) ),\qquad a_k^2=\frac{c_k\varphi_2(k)}{\sqrt{\Delta(k)}} (\phi(k)-1 ),
\end{equation*}
 with $c_k$ being the coefficients in the Fourier series of $\sigma(x)$, namely,
\[
c_k=\frac{1}{2\pi}\int_0^{2\pi} \sigma(x){\rm e}^{-\i k x}\,{\rm d}x=
\begin{cases}
\dfrac{1}{2}, &k=0,\\
0, &k\neq0\quad \mathrm{even},\\
 \dfrac{\i}{\pi k}, &k\quad \mathrm{odd}.
\end{cases}
\]

 {\bf Case 2.} $\Delta(k)\equiv0$. The corresponding solutions are
\begin{equation}\label{uv-s-2}
u(t, x)=v(t, x)\sim\sum_{k=-\infty}^{+\infty}c_k{\rm e}^{\i(kx-\omega(k) t)} \qquad \mathrm{with} \quad \omega(k)=-\varphi_1(k)-\varphi_2(k).
\end{equation}

The following lemma underlies the conditions for the corresponding systems which exhibit the dispersive quantization effect at rational times.

\begin{Definition}[\cite{CO12}]
A polynomial $P(k)=c_0+c_1k+\cdots+c_mk^m$ is called an integral polynomial if its coefficients are integers: $c_i\in \mathbb{Z}$, $i=0, \ldots, m$.
\end{Definition}

\begin{Lemma}[\cite{YKQ}]\label{thm-dq-sic}
Suppose $\varphi_j(k)$ in~\eqref{L-phi}, associated with the linear, constant-coefficient differential operators $L_j$, $j=1, 2, 3, 4$, in system~\eqref{lin-sys1} are integral polynomials of $k$ and $\Delta(k)$ in~\eqref{delta-k} are non-negative.
Then at every rational time $t^\ast=\pi p/q$, with $p$ and $0\neq q\in \mathbb{Z}$, the solutions~\eqref{uv-s-1} corresponding to $\Delta(k)>0$ and~\eqref{uv-s-2} corresponding to $\Delta(k)=0$, of the periodic initial-boundary value problem for system~\eqref{lin-sys1}, with initial condition~\eqref{iv-s}, are constants on every subinterval $\pi j/q < x < \pi (j+1)/q$ for $j=0, \ldots, 2q-1$, if and only if the following dispersive quantization conditions are satisfied:
\begin{equation}\label{dq-c1}
\varphi_4=\varphi_1+\alpha \varphi_2,\qquad \varphi_3=\beta \varphi_2,\qquad \alpha, \beta \in \mathbb{R} \quad \mathrm{and} \quad \frac{1}{2}\bigl(\alpha\pm\sqrt{\alpha^2+4\beta}\bigr)\in \mathbb{Z}.
\end{equation}
\end{Lemma}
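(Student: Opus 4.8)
The plan is to diagonalize the Fourier symbol of \eqref{lin-sys1} and reduce the two-component quantization problem to two scalar ones, to which the scalar dispersive quantization theorem of \cite{CO12,Olv10} applies. In Fourier space the system becomes, for each $k$, the $2\times2$ linear ODE $\frac{d}{dt}(\hat u,\hat v)^{\mathsf T}=\mathrm i\,\Phi(k)\,(\hat u,\hat v)^{\mathsf T}$ with symbol matrix $\Phi(k)=\bigl(\begin{smallmatrix}\varphi_1&\varphi_2\\ \varphi_3&\varphi_4\end{smallmatrix}\bigr)$, whose eigenvalues $\tfrac12(\varphi_1+\varphi_4\pm\sqrt{\Delta})=-\omega_{1},-\omega_{2}$ and eigenvector slopes $\phi,\psi$ produce exactly the mode expansion \eqref{uv-s-1}. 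The guiding principle is that a superposition $\sum_k\bigl(a_k^1\mathrm e^{\mathrm i(kx-\omega_1 t)}+a_k^2\mathrm e^{\mathrm i(kx-\omega_2 t)}\bigr)$ is piecewise constant at every rational time iff (i) each effective dispersion relation $\omega_1,\omega_2$ is an integral polynomial, so that the scalar theorem quantizes each mode on the partition $\pi j/q<x<\pi(j+1)/q$, and (ii) the mixing factors $\phi,\psi$, equivalently the normalized weights $a_k^1/c_k$ and $a_k^2/c_k$, are independent of $k$, so that they do not act as genuine Fourier multipliers that would break the piecewise-constant structure.

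For sufficiency I would substitute \eqref{dq-c1} directly. With $\varphi_4-\varphi_1=\alpha\varphi_2$ and $\varphi_3=\beta\varphi_2$ one gets $\Delta=(\alpha^2+4\beta)\varphi_2^2$, hence $\sqrt{\Delta}=\sqrt{\alpha^2+4\beta}\,|\varphi_2|$, and the elementary identities $\phi+\psi=(\varphi_4-\varphi_1)/\varphi_2=\alpha$, $\phi\psi=-\varphi_3/\varphi_2=-\beta$ show that $\phi,\psi$ collapse to the constants $\tfrac12(\alpha\pm\sqrt{\alpha^2+4\beta})$ (the sign of $\varphi_2$ merely relabels the two modes). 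Since $\omega_1=-\varphi_1-\phi\varphi_2$ and $\omega_2=-\varphi_1-\psi\varphi_2$, the integrality hypothesis $\tfrac12(\alpha\pm\sqrt{\alpha^2+4\beta})\in\mathbb Z$ makes both $\omega_1,\omega_2$ integral polynomials. Consequently each of $u,v$ is a fixed constant-coefficient linear combination of two scalar solutions with integral-polynomial dispersion, and the scalar theorem renders each summand constant on every subinterval $\pi j/q<x<\pi(j+1)/q$; a finite linear combination of functions constant on the same partition is again constant there. The degenerate Case~2 ($\Delta\equiv0$) reduces to the single mode \eqref{uv-s-2}, whose dispersion is a sum of integral polynomials and hence quantizes automatically.

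For necessity I would run the two requirements backwards. If quantization forces the mixing factors $\phi,\psi$ to be $k$-independent, then the symmetric functions $\alpha:=\phi+\psi=(\varphi_4-\varphi_1)/\varphi_2$ and $\beta:=-\phi\psi=\varphi_3/\varphi_2$ are constants, yielding at once $\varphi_4=\varphi_1+\alpha\varphi_2$ and $\varphi_3=\beta\varphi_2$. If in addition quantization forces each $\omega_j=-\varphi_1-\phi\varphi_2$ to be an integral polynomial, then, $\varphi_1,\varphi_2$ being integral polynomials, the constants $\phi,\psi=\tfrac12(\alpha\pm\sqrt{\alpha^2+4\beta})$ must be integers, which is the last condition in \eqref{dq-c1}.

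I expect the real obstacle to be establishing, rather than assuming, these two structural requirements from piecewise-constancy alone: that the superposition \eqref{uv-s-1} can be piecewise constant at every rational time only if the multipliers $\phi,\psi$ are $k$-independent and the phases $\omega_1,\omega_2$ are integral polynomials. The argument must separate the two temporal modes $\mathrm e^{-\mathrm i\omega_1 t}$ and $\mathrm e^{-\mathrm i\omega_2 t}$, which carry identical spatial frequencies, by exploiting how they drift relative to one another as $t$ ranges over the rationals, and then appeal to the scalar converse---that a step profile evolved under a non-integral-polynomial symbol, or modulated by a nonconstant multiplier, fails to quantize on the partition of mesh $\pi/q$. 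Carrying this separation out cleanly, while tracking the sign and the content of $\varphi_2$ in the integrality step, is where the technical effort concentrates.
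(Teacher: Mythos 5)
A preliminary point: the paper never proves this lemma --- it is quoted verbatim from \cite{YKQ} --- so there is no in-paper argument to compare yours against, and your proposal must stand on its own. Its sufficiency half does, and is surely the intended argument: under \eqref{dq-c1} one has $\Delta=(\alpha^2+4\beta)\varphi_2^2$, the ratios $\phi(k),\psi(k)$ take only the two integer values $a_{1,2}=\frac{1}{2}\bigl(\alpha\pm\sqrt{\alpha^2+4\beta}\bigr)$ (the sign of $\varphi_2(k)$ only swaps which is which, and the same swap exchanges $\omega_1$ and $\omega_2$, so the expansion \eqref{uv-s-1} is uniform in $k$), the phases $-\omega_{1,2}=\varphi_1+a_{1,2}\varphi_2$ are integral polynomials, and $u$, $v$ are exhibited as fixed linear combinations of two scalar evolutions of $\sigma$ to which the scalar quantization theorem of \cite{CO12} applies; constancy on the common mesh survives linear combination.

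The necessity half is a genuine gap, and not only because you assume rather than derive the two structural requirements (which you concede). Even granting them, your final step is false: from ``$\phi$ constant and $\varphi_1+\phi\varphi_2$ an integral polynomial'' one can conclude only $\phi\in\frac{1}{c}\mathbb{Z}$, where $c$ is the content (the gcd of the coefficients) of $\varphi_2$, not $\phi\in\mathbb{Z}$. This is fatal for the route, not a detail to be tracked. Concretely, take $\varphi_1=\varphi_4=k^2$, $\varphi_2=4\bigl(k^2+1\bigr)$, $\varphi_3=k^2+1$: all four are integral polynomials, $\Delta=16\bigl(k^2+1\bigr)^2>0$ for every $k$, and here $\alpha=0$, $\beta=\frac{1}{4}$, $\phi=-\psi=\frac{1}{2}$, so \eqref{dq-c1} fails. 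Nevertheless $-\omega_1=3k^2+2$ and $-\omega_2=-\bigl(k^2+2\bigr)$ are integral polynomials, and the solutions \eqref{uv-s-1} work out to $u=\sum_k c_k\bigl[\frac{3}{2}{\rm e}^{\i(kx+(3k^2+2)t)}-\frac{1}{2}{\rm e}^{\i(kx-(k^2+2)t)}\bigr]$ and $v=\sum_k c_k\bigl[\frac{3}{4}{\rm e}^{\i(kx+(3k^2+2)t)}+\frac{1}{4}{\rm e}^{\i(kx-(k^2+2)t)}\bigr]$, which at every $t^\ast=\pi p/q$ are constant on each interval $\pi j/q<x<\pi(j+1)/q$, since each sum is a finite combination of translates $\sigma(x+\pi l/q)$, exactly as in the sufficiency argument. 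So quantization of one fixed admissible pair $(\varphi_1,\varphi_2)$ simply does not imply \eqref{dq-c1}: the ``only if'' can only be true when read as ranging over all integral polynomials $\varphi_1,\varphi_2$ compatible with the structural relations (equivalently, after normalizing $\varphi_2$ to have content $1$), and a correct necessity proof must, for each $(\alpha,\beta)$ violating the integer condition, construct some $\varphi_1,\varphi_2$ and a rational time at which quantization fails on the prescribed mesh. Your plan, which argues from a single fixed system, cannot close this; indeed your own (correct) observation that Case 2 ``quantizes automatically'' already contradicts a literal ``only if'' when $\Delta\equiv 0$.
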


In view of Lemma \ref{thm-dq-sic}, we find that the solutions of the underlying periodic Riemann problem will admit the dispersive quantization phenomena at rational times if and only if the functions~$\varphi_j(k)$ in~\eqref{L-phi} associated with the system are integral polynomials of $k$ and satisfy the dispersive quantization conditions~\eqref{dq-c1}. On the other hand, strong evidence by previous numerical simulations implies that dispersive fractalization occurs for all cases when the quantization appears. In light of this, we will turn our attention to the fractal solutions at irrational times for the these systems which have been proved to admit the dispersive quantization solutions at rational times. The main result is given in Theorem \ref{thm-fra-lin} below. To state it, we need to introduce the definition of fractal dimension.

\begin{Definition}[\cite{ET16}] 
The upper Minkowski (also known as fractal) dimension, $\overline{\mathrm{dim}}(E)$, of a~bounded set $E$ is defined by
 \[
 \lim\sup\limits_{\epsilon \rightarrow 0}\frac{\log(\mathcal{N}(E, \epsilon))}{\log \left(\frac{1}{\epsilon}\right)}, \]
 where $\mathcal{N}(E, \epsilon)$ is the minimum number of $\epsilon$-balls required to cover $E$.
\end{Definition}

 \begin{Theorem}\label{thm-fra-lin}
Suppose $L_1$, $L_2$ be scalar, constant-coefficient differential operators with purely imaginary Fourier transform $\hat{L}_j(k)=\i \varphi_j(k)$, $j=1, 2$. Let $\varphi_1(k)$, $\varphi_2(k)$ be integral polynomials of order $n$ and $m$ $(n, m \geq 2)$, respectively, and $\alpha, \beta\in \mathbb{Z}$ satisfying $\big(\alpha\pm \sqrt{\alpha^2+4\beta}\big)/2\in \mathbb{Z}$. Consider the following periodic initial-boundary value problem of the two-component linear dispersive evolution equation on the torus
\begin{gather*}
u_t=L_1[u]+L_2[v],\\
v_t=\beta L_2[u]+(L_1+\alpha L_2)[v],\qquad t\in \mathbb{R}, \quad x\in \mathbb{T},\\
u(0, x)=f(x),\qquad v(0, x)=g(x).
\end{gather*}
Assume that $f$ and $g$ are of bounded variation, we have
\begin{itemize}\itemsep=0pt
\item[$(\rm i)$] $u(t, x)$ and $v(t, x)$ are continuous function of $x$ at each irrational time;
\item[$(\rm ii)$] Let $d=\mathrm{max}(n, m)$. If in addition $f, g\notin \bigcup_{\epsilon>0}H^{r_0+\epsilon}$, for some $r_0\in \bigl[\frac{1}{2}, \frac{1}{2}+2^{-d}\bigr)$, and~$\varphi_1(k)$,~$\varphi_2(k)$ are not odd polynomials,
then both the real part and the imaginary part of the solutions $u(t, x)$ and $v(t, x)$ have fractal dimension $D\in\bigl[2+2^{1-d}-2r_0,2-2^{1-d}\bigr]$. If~$\varphi_1(k)$,~$\varphi_2(k)$ are odd, the lower bound above holds for the real-valued solutions.
\end{itemize}
\end{Theorem}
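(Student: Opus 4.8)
The plan is to eliminate the coupling by a constant, frequency-independent change of variables and then reduce everything to the scalar fractalization theory. Taking Fourier coefficients in $x$, the system becomes $\partial_t(\hat u,\hat v)^{\mathsf{T}}=\i\bigl(\varphi_1(k) I+\varphi_2(k) A\bigr)(\hat u,\hat v)^{\mathsf{T}}$, where $A=\left(\begin{smallmatrix}0&1\\ \beta&\alpha\end{smallmatrix}\right)$ is the companion matrix of $\mu^2-\alpha\mu-\beta$. Its roots $\mu_\pm=\bigl(\alpha\pm\sqrt{\alpha^2+4\beta}\bigr)/2$ are integers by hypothesis, with $k$-independent eigenvectors $(1,\mu_\pm)^{\mathsf{T}}$. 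Since the dimension assertion requires $\mu_+\neq\mu_-$, I work in the diagonalizable regime $\alpha^2+4\beta>0$, i.e.\ $\Delta(k)>0$, which is Case~1 of the quantization analysis. Then a fixed invertible matrix $T$ with columns $(1,\mu_+)^{\mathsf{T}}$, $(1,\mu_-)^{\mathsf{T}}$, independent of $k$ and $t$, diagonalizes $A$; setting $(p,q)^{\mathsf{T}}=T^{-1}(u,v)^{\mathsf{T}}$ decouples the problem into the two scalar linear dispersive equations $\hat p_t=\i\omega_+(k)\hat p$ and $\hat q_t=\i\omega_-(k)\hat q$, with $\omega_\pm(k)=\varphi_1(k)+\mu_\pm\varphi_2(k)$. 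As $\mu_\pm\in\mathbb{Z}$, each $\omega_\pm$ is an integral polynomial of degree at most $d=\max(n,m)$, and $p(0,\cdot)$, $q(0,\cdot)$, being constant-coefficient combinations of $f$ and $g$, are again of bounded variation.

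With the system decoupled, both assertions reduce to the corresponding statements for a single linear dispersive equation with integral-polynomial dispersion acting on BV data. For such a scalar equation the exponential-sum (Weyl and Gauss sum) estimates underlying the analysis of Chousionis--Erdo\u{g}an--Tzirakis \cite{CET} give, at every irrational time, a uniformly convergent Fourier series, hence continuity, together with a quantitative H\"older/Besov modulus yielding the upper graph-dimension bound $2-2^{1-d}$; a matching lower bound $2+2^{1-d}-2r_0$ follows from the failure of the data to belong to $H^{r_0+\epsilon}$ and the parity of the dispersion relation. Applying this to $p$ and $q$ yields their continuity and the dimension bounds for the real and imaginary parts, whence assertion $(\rm i)$ for $u=p+q$ and $v=\mu_+p+\mu_-q$ is immediate.

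It remains to transfer the dimension bounds in $(\rm ii)$ back through $T$. The upper bound is painless, since H\"older/Besov regularity is stable under constant-coefficient linear combinations, so $u$ and $v$ inherit $D\le 2-2^{1-d}$. The lower bound is the crux and the main obstacle: a priori the superposition $u=p+q$ could suffer cancellation that lowers the graph dimension. I would rule this out by a high-frequency $L^2$-energy estimate. Because $\omega_+-\omega_-=(\mu_+-\mu_-)\varphi_2$ is a nonconstant polynomial, the phases $\mathrm{e}^{\i\omega_\pm(k)t}$ decorrelate as $k$ varies, so the cross term in $\Vert P_N u\Vert_{L^2}^2$ is of lower order and one obtains $\Vert P_N u\Vert_{L^2}^2\gtrsim\Vert P_N p\Vert_{L^2}^2+\Vert P_N q\Vert_{L^2}^2$ along the scales $N$ that detect irregularity. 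Since $f,g\notin\bigcup_{\epsilon>0}H^{r_0+\epsilon}$ forces at least one of $p(0,\cdot)$, $q(0,\cdot)$ to fail the same membership, and the irregular component dominates the high frequencies, the scalar lower-bound criterion applies to $u$, and to $v$ as well because $\mu_+\neq\mu_-$. Finally, the hypothesis that $\varphi_1$, $\varphi_2$ are not odd guarantees that $\omega_\pm$ are not odd, which breaks the conjugation symmetry $\omega(-k)=-\omega(k)$ and delivers the lower bound for both the real and the imaginary parts; when $\varphi_1$, $\varphi_2$ are odd only the real part of real-valued data retains it, exactly as in the scalar case.
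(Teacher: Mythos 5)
Your reduction is not in fact a different route from the paper's: the paper never says ``diagonalize'', but its explicit solution representation \eqref{u-fkgk} is precisely your spectral decomposition, with $a_1=\mu_+$, $a_2=\mu_-$, $P_j(k)=\varphi_1(k)+a_j\varphi_2(k)=\omega_{\pm}(k)$, and with your $p_0$, $q_0$ equal to the fixed combinations $(g-a_2f)/(a_1-a_2)$ and $(a_1f-g)/(a_1-a_2)$. Part (i) and the upper bound in (ii) are proved there with exactly the ingredients you invoke: the Weyl-sum estimate (Lemma~\ref{fra-lin-lem1}) applied to the kernels $H_t^j$, the representation of BV data by Stieltjes measures, $B_{\infty,\infty}^{\alpha}=C^{\alpha}$, and $C^{\alpha}\Rightarrow D\le 2-\alpha$; the lower bound is obtained there from Lemma~\ref{fra-lin-lem2}, the embedding $B_{1,\infty}^{r_1}\cap B_{\infty,\infty}^{r_2}\subset H^r$ for $r_1+r_2>2r$, and Lemma~\ref{fra-lin-lem3}. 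Like you, the paper implicitly restricts to $\alpha^2+4\beta>0$, since all its formulas divide by $a_1-a_2$. So the two arguments coincide up to the step you yourself identify as the crux.

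At that crux your proposal has a genuine gap, and it is the same gap the paper conceals by applying Lemma~\ref{fra-lin-lem2} --- a statement about a single propagator ${\rm e}^{\i tP(-\i\partial_x)}g$ --- directly to the superposition $u={\rm e}^{\i tP_1(-\i\partial_x)}p_0+{\rm e}^{\i tP_2(-\i\partial_x)}q_0$. First, at a \emph{fixed} irrational $t$, nonconstancy of $\omega_+-\omega_-=(a_1-a_2)\varphi_2$ gives no control of the cross term $\sum_{k\sim N}\langle k\rangle^{2r}{\rm e}^{\i t(a_1-a_2)\varphi_2(k)}\hat p_0(k)\overline{\hat q_0(k)}$, because the phases of $\hat p_0\overline{\hat q_0}$ are unknown and may align with ${\rm e}^{\i t(a_1-a_2)\varphi_2(k)}$; the only available mechanism is averaging in $t$, as in the proof of Lemma~\ref{fra-lin-lem2} in \cite{ET16}, which yields your inequality only for almost every $t$. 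Second, and fatally for the statement in the both-parts form, the averaging does not survive the split into real and imaginary parts: there the cross terms carry the phases $P_1(k)+P_2(-k)$ and $P_2(k)+P_1(-k)$, and the hypothesis that $\varphi_1$, $\varphi_2$ are not odd does \emph{not} prevent these from vanishing identically. Concretely, take $\varphi_1(k)=k^3-k^2$, $\varphi_2(k)=k^2$, $\alpha=2$, $\beta=3$, so that $a_1=3$, $a_2=-1$, $P_1(k)=k^3+2k^2$ and $P_2(k)=k^3-2k^2=-P_1(-k)$; with data $f=\i h$, $g=2h+\i h$ for a real step function $h$, one gets $p_0=\tfrac12(1+\i)h$, $q_0=-\tfrac12(1-\i)h$, so $\overline{p_0}=-q_0$, $\overline{q_0}=-p_0$, and therefore $\overline{u(t)}=-u(t)$ for all $t$, i.e.\ $\operatorname{Re}u\equiv 0$, whose graph has dimension $1<2+2^{1-d}-2r_0=5/4$, even though every hypothesis of part (ii) is satisfied. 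Hence no refinement of your decorrelation estimate can close the gap as stated: what your (time-averaged) scheme can prove is the lower bound for almost every $t$ and for \emph{either} the real or the imaginary part --- which is exactly how the nonlinear analogue, Theorem~\ref{thm-mana}(ii), is phrased --- unless one imposes a joint nondegeneracy condition on the pair $(P_1,P_2)$ excluding $P_1(k)+P_2(-k)\equiv 0$, a condition that has no counterpart in the scalar theory and that neither you nor the paper formulates.
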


The proofs of theorems concerning the fractal solutions of the linear Schr\"{o}dinger equation and more general scalar linear evolution equation with higher-order dispersive relation were given respectively in \cite{Osk92, Rod} and \cite{ES19} (see also in \cite{ET16}). We now study the fractal solutions in the two-component linear system setting.
As in \cite{ES19, ET16}, to prove Theorem \ref{thm-fra-lin}, we need the following preliminary lemmas.

First, the following lemma given in \cite{ET16} is a corollary of a well-known result from number theory by Montgomery \cite{Mon} and a theorem by Khinchin \cite{Khi} and L$\mathrm{\acute{e}}$vy \cite{Lev}; see also \cite[Theorems~2.19 and 2.20]{ET16}.
\begin{Lemma}[\cite{ET16}]\label{fra-lin-lem1}
Let $P(k)$ be an integral polynomial with order $d\geq 2$. For each irrational time $t$, and for any $\epsilon>0$, we have
\[
\sup\limits_{x}\Biggl|\sum_{k=1}^{N}{\rm e}^{\i (P(k)t+k x)} \Biggr|\lesssim N^{1-2^{1-d}+\epsilon}
\]
for all $N$.
\end{Lemma}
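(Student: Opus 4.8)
The plan rests on two classical ingredients: Weyl's inequality for polynomial exponential sums, which converts the problem into a question about rational approximation of the leading coefficient, and the Khinchin--Lévy control on the growth of continued-fraction denominators, which supplies the approximation we need. I first rewrite the phase as $g(k)=\frac{1}{2\pi}\bigl(P(k)t+kx\bigr)$, so that the sum becomes a degree-$d$ polynomial exponential sum $\sum_{k=1}^{N}{\rm e}^{2\pi\i g(k)}$ whose leading coefficient is $\beta:=c_d t/(2\pi)$, with $c_d\in\mathbb{Z}\setminus\{0\}$ the leading coefficient of $P$; since $t$ is irrational, $\beta$ is irrational. The decisive structural feature is that Weyl's inequality, in the form that if $|\beta-a/q|\le q^{-2}$ with $(a,q)=1$ then
\[
\Bigl|\sum_{k=1}^{N}{\rm e}^{2\pi\i g(k)}\Bigr|\lesssim_{d,\epsilon} N^{1+\epsilon}\Bigl(\tfrac1q+\tfrac1N+\tfrac{q}{N^d}\Bigr)^{2^{1-d}},
\]
produces a bound that depends only on $\beta$ and is \emph{uniform in the lower-order coefficients} of $g$. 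Since $x$ enters only through the linear coefficient of $g$, this bound is automatically uniform in $x$, and the supremum $\sup_x$ therefore costs nothing.

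Next I would select the approximation $a/q$ to $\beta$. Writing $p_n/q_n$ for the convergents of $\beta$, one has $|\beta-p_n/q_n|<q_n^{-2}$ for every $n$, so each convergent is admissible in Weyl's inequality. Given $N$, I would take the index $n$ with $q_n\le N^{d-1}<q_{n+1}$. The upper bound gives at once $q_n N^{-d}\le N^{-1}$. For the lower bound I invoke the Diophantine input: by the Khinchin--Lévy theorem, for almost every $t$ the convergent denominators grow regularly, $q_{n+1}\lesssim_\epsilon q_n^{1+\epsilon}$. Combined with $q_{n+1}>N^{d-1}$ this yields $q_n\gtrsim N^{(d-1)/(1+\epsilon)}\ge N^{1-\epsilon}$ for every $d\ge2$, hence $q_n^{-1}\lesssim N^{-1+\epsilon}$. (This is the precise sense in which the statement is to be read: the Khinchin--Lévy route establishes it on a full-measure set of irrational $t$, with the implied constant depending on $t$ and $\epsilon$, in agreement with \cite[Theorems~2.19 and~2.20]{ET16}.)

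Assembling the pieces, with $q=q_n$ the three terms satisfy $q^{-1}+N^{-1}+qN^{-d}\lesssim N^{-1+\epsilon}$, so Weyl's inequality yields
\[
\sup_x\Bigl|\sum_{k=1}^{N}{\rm e}^{\i(P(k)t+kx)}\Bigr|\lesssim N^{1+\epsilon}\bigl(N^{-1+\epsilon}\bigr)^{2^{1-d}}=N^{1-2^{1-d}+\epsilon+2^{1-d}\epsilon},
\]
and relabelling the arbitrary losses into a single $N^{0+}$ factor gives the claimed exponent $1-2^{1-d}+\epsilon$.

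I expect the sole genuine obstacle to be the Diophantine step. Weyl's inequality is effective only when $\beta$ admits a rational approximation with denominator lying in the window $[N^{1-\epsilon},N^{d-1}]$; for Liouville-type $t$ no convergent need fall in this window for certain $N$, and there the sum can reach size $\gg N^{1-2^{1-d}+\epsilon}$ no matter how the constant is chosen. It is exactly the Khinchin--Lévy bound $q_{n+1}\lesssim_\epsilon q_n^{1+\epsilon}$ that forbids such gaps off a null set, and it is this control, rather than the (otherwise routine) application of Weyl differencing, that pins down the exponent $1-2^{1-d}$ and determines the full-measure set on which the estimate holds.
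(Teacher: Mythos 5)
Your proof follows exactly the route the paper indicates for this lemma: the paper offers no argument of its own, quoting the statement from \cite{ET16} as a corollary of Montgomery's Weyl-sum estimate together with the Khinchin--L\'evy theorem on continued fractions, and your reconstruction via Weyl's inequality (uniform in the lower-order coefficients, hence in $x$) plus the selection of a convergent denominator in the window $\bigl[N^{1-\epsilon},N^{d-1}\bigr]$ is precisely that intended proof. Your caveat is also accurate: the Diophantine step only holds for almost every $t$ (and the bound genuinely fails for Liouville-type times, e.g., for $d=2$ one gets sums of size $\approx N^{3/4}$ at $N\sim q_n^2$ from rational Gauss sums), which matches the actual content of \cite[Theorems~2.19 and~2.20]{ET16}, so the lemma's phrase ``for each irrational time'' must indeed be read in that almost-every sense, with the implicit constant depending on $t$ and $\epsilon$.
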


The next two lemmas will be used to estimate the lower bound for the fractal dimension in the theorem. The first one is given by Erdo\u{g}an and Tzirakis; see \cite[Lemma~2.18]{ET16}.
\begin{Lemma}[\cite{ET16}]\label{fra-lin-lem2}
Let $P(k)$ be an integral polynomial, which is not odd, and $P(0)=0$. Let $g\colon \mathbb{T}\rightarrow\mathbb{C}$ be of bounded variation. Assume that
\[
r_0 := \sup \{s\colon g\in H^s\}\in\left[\frac{1}{2},1\right).
\]
Then, for almost every $t$, neither the real nor the imaginary parts of ${\rm e}^{\i tP(-\i\partial_x)}g$ belong to $H^r$ for~${r>r_0}$.
\end{Lemma}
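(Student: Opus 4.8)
The plan is to pass to the Fourier side, isolate the obstruction to $H^r$-regularity as a diagonal term that diverges once $r>r_0$, and then show that the competing off-diagonal term cannot cancel it for almost every $t$. Write $u(t,x)={\rm e}^{\i tP(-\i\partial_x)}g$, so that $\hat u(t,k)={\rm e}^{\i tP(k)}\hat g(k)$; since $P$ is integral, $P(k)\in\mathbb Z$, the flow is $2\pi$-periodic in $t$ and we may regard $t\in\mathbb T$. Computing the Fourier coefficients of $\mathrm{Re}\,u=(u+\bar u)/2$ gives, at frequency $k$, the coefficient $\tfrac12\big({\rm e}^{\i tP(k)}\hat g(k)+{\rm e}^{-\i tP(-k)}\overline{\hat g(-k)}\big)$, whence
\[
\langle k\rangle^{2r}\big|\widehat{\mathrm{Re}\,u}(k)\big|^2
=\tfrac14\langle k\rangle^{2r}\big(|\hat g(k)|^2+|\hat g(-k)|^2\big)
+\tfrac12\langle k\rangle^{2r}\,\mathrm{Re}\big({\rm e}^{\i tQ(k)}\hat g(k)\hat g(-k)\big),
\]
where $Q(k):=P(k)+P(-k)$. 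Because $P$ is not odd and $P(0)=0$, the polynomial $Q$ is a nonconstant even integral polynomial; this is exactly where the ``not odd'' hypothesis enters, and it is what makes the off-diagonal term genuinely oscillate in $t$. The imaginary part is identical with the sign of the second term reversed.

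Summing over $|k|\le N$ splits $S_N(t):=\sum_{|k|\le N}\langle k\rangle^{2r}|\widehat{\mathrm{Re}\,u}(k)|^2$ into a $t$-independent diagonal part $A_N=\tfrac12\sum_{|k|\le N}\langle k\rangle^{2r}|\hat g(k)|^2$ and a cross part $B_N(t)=\tfrac12\sum_{|k|\le N}\langle k\rangle^{2r}\mathrm{Re}\big({\rm e}^{\i tQ(k)}d_k\big)$, where $d_k=\hat g(k)\hat g(-k)$. Since $r>r_0$ forces $g\notin H^r$, we have $A_N\to\infty$. I would first reduce to a single exponent: it suffices to show that $\{t:\mathrm{Re}\,u(t,\cdot)\in H^r\}$ is null for each fixed $r$ in the window $(r_0,(1+r_0)/2)$, which is nonempty precisely because $r_0<1$. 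Indeed, if $\mathrm{Re}\,u(t,\cdot)\in H^r$ for some $r>r_0$, then it lies in $H^{r'}$ for any $r'\le r$; choosing a sequence $r_n\downarrow r_0$ inside the window, the bad set is contained in the countable union $\bigcup_n\{t:\mathrm{Re}\,u(t,\cdot)\in H^{r_n}\}$, so nullity of each piece gives the claim.

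The heart of the matter is to control the cross term after averaging in $t$. Fix $r\in(r_0,(1+r_0)/2)$ and set $s=2r-1<r_0$, so that $\sum_k\langle k\rangle^{2s}|\hat g(k)|^2<\infty$. For a measurable $E\subset\mathbb T$, integrating term by term and using $\int_E{\rm e}^{\i tQ(k)}\,{\rm d}t=2\pi\,\widehat{\mathbf{1}_E}(-Q(k))$ with $Q(k)\in\mathbb Z$,
\[
\int_E B_N(t)\,{\rm d}t=\pi\sum_{|k|\le N}\langle k\rangle^{2r}\,\mathrm{Re}\big(d_k\,\widehat{\mathbf{1}_E}(-Q(k))\big)
\le\pi\Big(\sum_{|k|\le N}\langle k\rangle^{4r}|d_k|^2\Big)^{1/2}\Big(\sum_{|k|\le N}\big|\widehat{\mathbf{1}_E}(-Q(k))\big|^2\Big)^{1/2}.
\]
Applying the bounded-variation decay $|\hat g(k)|\lesssim\langle k\rangle^{-1}$ to one factor of $d_k$, the first sum is $\lesssim\sum_k\langle k\rangle^{4r-2}|\hat g(k)|^2=\sum_k\langle k\rangle^{2s}|\hat g(k)|^2<\infty$, uniformly in $N$. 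For the second sum, since $Q$ is a nonconstant polynomial each integer value is attained at most $\deg Q$ times, so by Parseval $\sum_{|k|\le N}|\widehat{\mathbf{1}_E}(-Q(k))|^2\le\deg Q\sum_n|\widehat{\mathbf{1}_E}(n)|^2=\deg Q\cdot\tfrac{|E|}{2\pi}<\infty$. Hence $\big|\int_E B_N\,{\rm d}t\big|\le C(E)$ uniformly in $N$.

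To finish, suppose $\{t:\mathrm{Re}\,u(t,\cdot)\in H^r\}$ had positive measure. Writing it as the increasing union of $E_M=\{t:\sup_N S_N(t)\le M\}$, some $E_M$ has positive measure, and on it $\int_{E_M}S_N\,{\rm d}t\le M|E_M|$ for every $N$. On the other hand the decomposition and the uniform cross-term bound give $\int_{E_M}S_N\,{\rm d}t=A_N|E_M|+\int_{E_M}B_N\,{\rm d}t\ge A_N|E_M|-C(E_M)\to\infty$, a contradiction; the imaginary part is handled verbatim. I expect the main obstacle to be precisely this $\infty-\infty$ competition: both $A_N$ and $\sum\langle k\rangle^{2r}|d_k|$ diverge, so pointwise-in-$t$ cancellation is a priori possible, and it is only the oscillation ${\rm e}^{\i tQ(k)}$ with $Q$ nonconstant—tamed upon integrating against $\mathbf{1}_{E_M}$ through Parseval and the bounded multiplicity of $Q$—that rules it out for almost every $t$.
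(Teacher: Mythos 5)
Your proposal is correct, and since the paper does not prove this lemma itself but cites it from [ET16, Lemma~2.18], the relevant comparison is with that source: you follow essentially the same route, expanding $\langle k\rangle^{2r}\vert\widehat{\operatorname{Re}u}(k)\vert^2$ into a divergent $t$-independent diagonal part plus a cross term oscillating as ${\rm e}^{\i tQ(k)}$ with $Q(k)=P(k)+P(-k)$ a nonconstant even integral polynomial (exactly where ``not odd'' and $P(0)=0$ enter), then taming the cross term via Parseval in $t$ together with the multiplicity bound $\leq\deg Q$ on the fibers of $Q$, the bounded-variation decay $\vert\hat g(k)\vert\lesssim\langle k\rangle^{-1}$, the window $r_0<r<(1+r_0)/2$ (nonempty precisely because $r_0<1$), and the countable reduction in $r$. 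The only cosmetic difference is the endgame: you integrate $S_N$ against $\mathbf{1}_{E_M}$ on a sublevel set of positive measure and derive a contradiction by duality, whereas [ET16] shows directly that the cross-term series converges in $L^2_t(\mathbb{T})$, hence is finite for almost every $t$, and concludes from the monotonicity of the nonnegative partial sums.
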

In fact, as claimed in \cite{ET16}, if $P$ is odd and $g$ is a real-valued function, then Lemma \ref{fra-lin-lem2} still holds for the real-valued ${\rm e}^{\i tP(-\i\partial_x)}g$.
The second lemma as follows was proved by Deliu and Jawerth \cite{DJ}.
\begin{Lemma}[\cite{DJ}]\label{fra-lin-lem3}
The graph of a continuous function $f\colon \mathbb{T}\rightarrow\mathbb{R}$ has fractal dimension ${D \geq 2-s}$, provided that $f\notin \bigcup_{\epsilon>0} B_{1, \infty}^{s+\epsilon}$.
\end{Lemma}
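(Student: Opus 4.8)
Since the graph of any continuous function on $\mathbb{T}$ surjects onto the domain and hence already has upper Minkowski dimension at least $1$, the inequality $D\geq 2-s$ is automatic when $s\geq 1$; I may therefore assume $0<s<1$. The plan is to pass from the Littlewood--Paley definition of $B_{1,\infty}^{s}$ to its equivalent description via the $L^1$ modulus of smoothness, and then to bound the covering number of the graph from below by this modulus. For $0<s<1$ one has the classical equivalence
\[
\|f\|_{B_{1,\infty}^{s}}\approx \|f\|_{L^1}+\sup_{h>0} h^{-s}\,\omega_1(f,h),\qquad \omega_1(f,h):=\sup_{|t|\leq h}\int_{\mathbb{T}}|f(x+t)-f(x)|\,\mathrm{d}x.
\]
As $f$ is continuous on the compact torus it lies in $L^1$, so only the difference term controls membership, and the hypothesis $f\notin\bigcup_{\epsilon>0}B_{1,\infty}^{s+\epsilon}$ becomes the statement that $\omega_1(f,h)\not\lesssim h^{s+\epsilon}$ for every $\epsilon>0$; equivalently, for each fixed $\epsilon>0$ there is a sequence $h_n\downarrow 0$ with $\omega_1(f,h_n)\geq h_n^{s+\epsilon}$.

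Next I would set up the box count. Partitioning $\mathbb{T}$ into $N=2\pi/h$ consecutive intervals $I_i$ of length $h$ and writing $\mathrm{osc}_{I}(f)=\sup_I f-\inf_I f$, covering the portion of the graph lying above $I_i$ by $h$-balls (equivalently $h$-mesh boxes, which alters the count by a bounded factor only) requires at least $\mathrm{osc}_{I_i}(f)/h$ of them, since the graph has vertical extent $\mathrm{osc}_{I_i}(f)$ there; summing gives
\[
\mathcal{N}(\mathrm{graph}\,f,h)\gtrsim \frac1h\sum_i \mathrm{osc}_{I_i}(f).
\]
In the reverse direction, for $|t|\leq h$ and $x\in I_i$ the point $x+t$ lies in $I_i\cup I_{i+1}$, so $|f(x+t)-f(x)|\leq \mathrm{osc}_{\overline{I_i}}(f)+\mathrm{osc}_{\overline{I_{i+1}}}(f)$; integrating over each $I_i$ and summing yields
\[
\omega_1(f,h)\leq 2h\sum_i \mathrm{osc}_{I_i}(f).
\]
Chaining the two displays produces the crucial comparison $\mathcal{N}(\mathrm{graph}\,f,h)\gtrsim \omega_1(f,h)/h^{2}$.

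Finally I would insert the hypothesis. Fix $\epsilon>0$ and take $h_n\downarrow 0$ with $\omega_1(f,h_n)\geq h_n^{s+\epsilon}$; then
\[
\mathcal{N}(\mathrm{graph}\,f,h_n)\gtrsim h_n^{s+\epsilon-2}=h_n^{-(2-s-\epsilon)},
\]
so along this sequence $\log\mathcal{N}(\mathrm{graph}\,f,h_n)/\log(1/h_n)\geq 2-s-\epsilon-o(1)$, whence $\overline{\mathrm{dim}}(\mathrm{graph}\,f)=\limsup_{h\to0}\log\mathcal{N}(\mathrm{graph}\,f,h)/\log(1/h)\geq 2-s-\epsilon$. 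Letting $\epsilon\downarrow 0$ gives $D\geq 2-s$. The step demanding genuine care, and the natural main obstacle, is the first reduction: the equivalence between the Littlewood--Paley $B_{1,\infty}^{s}$ norm and the modulus-of-smoothness expression is standard for $0<s<1$ but is more delicate at the endpoint $p=1$ than in the Hilbertian range, and one must confirm that continuity places $f$ in $L^1$ so that the difference term alone governs membership. The two geometric inequalities are elementary once the oscillation bookkeeping over adjacent intervals is carried out carefully.
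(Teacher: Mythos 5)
Your proof is correct. The paper itself contains no argument for this lemma --- it is quoted verbatim from Deliu and Jawerth \cite{DJ} --- so there is no internal proof to compare against; what you have reconstructed is essentially the classical argument from that reference: the first-order $L^1$ modulus-of-smoothness characterization of $B_{1,\infty}^{s}$ for $0<s<1$, the lower bound for the covering number of the graph by the oscillation sum $\frac{1}{h}\sum_i \mathrm{osc}_{I_i}(f)$ (where continuity matters, since the intermediate value theorem is what guarantees $f(I_i)$ is a full interval of length $\mathrm{osc}_{I_i}(f)$, so that each $h$-ball can absorb vertical extent at most $2h$), and the reverse bound $\omega_1(f,h)\lesssim h\sum_i \mathrm{osc}_{I_i}(f)$. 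Three small bookkeeping points are worth making explicit, none of which affects correctness: for two-sided increments $|t|\le h$ the point $x+t$ may fall in $I_{i-1}\cup I_i\cup I_{i+1}$, so your constant $2$ should be $3$; in the box-count lower bound a single $h$-ball has horizontal extent $2h$ and hence meets at most three consecutive columns, so summing the per-column counts overcounts by a bounded factor, which is exactly what licenses the summation step you state without comment; and when you invoke the modulus characterization at smoothness $s+\epsilon$ you need $s+\epsilon<1$, which holds for all sufficiently small $\epsilon$, and only those matter in the limit $\epsilon\downarrow 0$ (your preliminary reduction disposing of $s\ge 1$ handles the rest). Also note that the sequence $h_n\downarrow 0$ with $\omega_1(f,h_n)\ge h_n^{s+\epsilon}$ exists because $\omega_1(f,\cdot)$ is bounded on the compact torus, forcing any sequence witnessing $\sup_h h^{-(s+\epsilon)}\omega_1(f,h)=\infty$ to tend to zero --- a half-line of justification worth including. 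With these details filled in, your argument is a complete and self-contained proof of the cited result, in the range of $s$ actually used in the paper ($s=2r_0-2^{1-d}\in(0,1)$).
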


\begin{proof} [\bf{Proof of Theorem \ref{thm-fra-lin}}]
(i) First of all, given the initial data $f$ and $g$ of bounded variation, referring back to forms of the solutions $u(t, x)$ and $v(t, x)$~\eqref{uv-s-1}, we are led to
\begin{gather}
u(t, x) \sim \hat{f}(0)+\frac{a_1}{a_1-a_2}\sum_{k\neq 0}{\rm e}^{\i (P_2(k)t+k x)}\hat{f}(k)-\frac{a_2}{a_1-a_2}\sum_{k\neq 0}{\rm e}^{\i (P_1(k)t+k x)}\hat{f}(k)\nonumber\\
\hphantom{u(t, x) \sim \hat{f}(0)}{} +\frac{1}{a_1-a_2}\sum_{k\neq 0}{\rm e}^{\i (P_1(k)t+k x)}\hat{g}(k)-\frac{1}{a_1-a_2}\sum_{k\neq 0}{\rm e}^{\i (P_2(k)t+k x)}\hat{g}(k),\nonumber\\
v(t, x) \sim \hat{g}(0)+\frac{a_1a_2}{a_1-a_2}\sum_{k\neq 0}{\rm e}^{\i (P_2(k)t+k x)}\hat{f}(k)-\frac{a_1a_2}{a_1-a_2}\sum_{k\neq 0}{\rm e}^{\i (P_1(k)t+k x)}\hat{f}(k)\nonumber\\
\hphantom{u(t, x) \sim \hat{f}(0)}{}+\frac{a_1}{a_1-a_2}\sum_{k\neq 0}{\rm e}^{\i (P_1(k)t+k x)}\hat{g}(k)-\frac{a_2}{a_1-a_2}\sum_{k\neq 0}{\rm e}^{\i (P_2(k)t+k x)}\hat{g}(k),\label{u-fkgk}
\end{gather}
where $a_1=\big(\alpha+\sqrt{\alpha^2+4\beta}\big)/2\in\mathbb{Z}$, $a_2=\big(\alpha-\sqrt{\alpha^2+4\beta}\big)/2\in\mathbb{Z}$, $P_1(k)=-\omega_1(k)=\varphi_1(k)+a_1\varphi_2(k)$, $P_2(k)=-\omega_2(k)=\varphi_1(k)+a_2\varphi_2(k)$, and thus both $P_1(k)$ and $P_2(k)$ are integral polynomials of order $d\geq 2$. Moreover, $\hat{f}(k)$ and $\hat{g}(k)$ in \eqref{u-fkgk} are the Fourier transform of~$f$ and~$g$, respectively. Therefore, they can be rewritten as
\begin{gather}
\hat{f}(k)=\frac{1}{2\pi}\int_{\mathbb{T}} f(y){\rm e}^{-\i k y}\,{\rm d}y=\frac{1}{2\pi\i k}\int_{\mathbb{T}} {\rm e}^{-\i k y}\,{\rm d}f(y),\nonumber\\
\hat{g}(k)=\frac{1}{2\pi}\int_{\mathbb{T}} g(y){\rm e}^{-\i k y}\,{\rm d}y=\frac{1}{2\pi\i k}\int_{\mathbb{T}} {\rm e}^{-\i k y}\,{\rm d}g(y),\label{fkgk-dfdg}
\end{gather}
where ${\rm d}f$ and ${\rm d}g$ are the Lebesgue--Stieltjes measure associated with $f$ and $g$, respectively.

Next, in view of~\eqref{u-fkgk}, applying the similar arguments as introduced in \cite{ES19, ET16}, we introduce, for each $j=1, 2$,
\[
H_{N, t}^j (x)=\sum_{0<|k|\leq N} \frac{{\rm e}^{\i(P_j(k)t+kx)}}{k}=\sum_{k=1}^{N}\frac{{\rm e}^{\i (P_j(k)t+k x)}-{\rm e}^{\i(P_j(-k)t-kx)}}{k},
\]
and then use Lemma \ref{fra-lin-lem1}, to deduce that, for each irrational time $t$, the sequence $H_{N, t}^j$ converges uniformly to the continuous function
\[
H_t^j(x)=\sum_{k\neq0}^{}\frac{{\rm e}^{\i(P_j(k)t+k x)}}{k},\qquad j=1, 2.
\]
Using the summation by parts formula, we further deduce that
for any $\epsilon>0$, and any $l=1, 2, \ldots$
\[
\Biggl\Vert \sum_{2^{l-1}\leq \vert k\vert<2^l}\frac{{\rm e}^{\i(P_j(k)t+k x)}}{k}\Biggr\Vert_{L_x^{\infty}}\lesssim2^{-l(2^{1-d}-\epsilon)},\qquad j=1, 2.
\]
We thus have, for each irrational time $t$,
\[
H_t^j(x)\in \bigcap_{\epsilon>0}\,B_{\infty, \infty}^{2^{1-d}-\epsilon}(\mathbb{T}),\qquad j=1, 2.
\]
Consequently,
\[
C_1H_t^1(x)+C_2H_t^2(x)\in \bigcap_{\epsilon>0}\,B_{\infty, \infty}^{2^{1-d}-\epsilon}(\mathbb{T})
\]
holds for arbitrary $C_1, C_2\in \mathbb{R}$. Note that $C^\alpha(\mathbb{T})$ coincides with $B_{\infty, \infty}^{\alpha}(\mathbb{T})$ for $0<\alpha<1$, see Triebel \cite{Tri}. Therefore, for each irrational time $t$,
\[
C_1H_t^1(x)+C_2H_t^2(x)\in \bigcap_{\epsilon>0}\,C^{2^{1-d}-\epsilon}(\mathbb{T}).
\]

Finally, combining~\eqref{u-fkgk} with~\eqref{fkgk-dfdg}, and using the uniform convergence of the sequence $H_{N, t}^j$, we deduce that
\begin{gather*}
u(t, x) \sim \hat{f}(0)-\i\biggl[\biggl(\frac{a_1}{a_1-a_2}H_t^2-\frac{a_2}{a_1-a_2}H_t^1\biggr)\ast {\rm d}f+\frac{1}{a_1-a_2}\bigl(H_t^1-H_t^2\bigr)\ast {\rm d}g\biggr]\\
\hphantom{u(t, x) \sim }{} \in \bigcap_{\epsilon>0}\,C^{2^{1-d}-\epsilon}(\mathbb{T}),\\
v(t, x) \sim \hat{g}(0)-\i\biggl[\frac{a_1a_2}{a_1-a_2}\bigl(H_t^2-H_t^1\bigr)\ast {\rm d}f+\biggl(\frac{a_1}{a_1-a_2} H_t^1-\frac{a_2}{a_1-a_2}H_t^2\biggr)\ast {\rm d}g\biggr]\\
\hphantom{v(t, x) \sim }{}\in \bigcap_{\epsilon>0}\,C^{2^{1-d}-\epsilon}(\mathbb{T}).
\end{gather*}
Since the convolution of a finite measure with a $C^\alpha$, function is $C^\alpha$. Part (i) is thereby proved.

(ii) First of all, recall that if $f\colon \mathbb{T}\rightarrow\mathbb{R}\in C^\alpha$, then the graph of $f$ has fractal dimension $D\leq 2-\alpha$, therefore, the graphs of the real and imaginary part of the solutions $u$ and $v$ have dimension at most $2-2^{1-d}$ for each irrational time.

Next, as for the lower bound for the fractal dimension in the theorem, in view of \eqref{u-fkgk} and Lemma \ref{fra-lin-lem2}, we find that, under the hypothesis of theorem, the real part and the imaginary part of $u$ and $v$ (considering only the real-valued solution when $\varphi_1(k)$, $\varphi_2(k)$ and then $P_1(k)$, $P_2(k)$ are odd) satisfy
\[
\operatorname{Re} u,\operatorname{Re} v,\operatorname{Im} u, \operatorname{Im} v\notin\bigcup_{\epsilon>0}^{}B_{1,\infty}^{2r_0-2^{1-d}+\epsilon}(\mathbb{T}),
\]
where we used the relation
\[
H^r(\mathbb{T})\supset B_{1,\infty}^{r_1}(\mathbb{T})\bigcap B_{\infty, \infty}^{r_2}(\mathbb{T}),
\]
for $r_1+r_2>2r$; see \cite{ET16}. Consequently, the lower bound for the fractal dimension follows immediately from Lemma \ref{fra-lin-lem3}, completing the proof of the theorem.
\end{proof}

\section{Fractal solutions of the Manakov system}\label{s3}

In the preceding section, we study the dispersive fractal phenomena at irrational times of the periodic initial-boundary value problem of the two-component linear systems whose periodic Riemann problems admit the dispersive quantization solutions at rational times.
The present section will concentrate on the fractal solutions of the periodic initial-boundary value problem in the context of the Manakov system~\eqref{mana}.

Note that the fractal solutions for the scalar NLS equation were studied by Erdo\u{g}an and Tzirakis \cite{ET13-nls}. Motivated by their results, we will investigate the periodic initial-boundary value problem of Manakov system~\eqref{mana}, subject to periodic boundary condition and initial datum of bounded variation. The main result for fractal solutions at irrational times are summarized in the following theorem. Further interpretation of this result, by means of numerical experimentation, will appear in next section.

\begin{Theorem}\label{thm-mana}
Consider the periodic initial-boundary value problem of Manakov system on the torus,
\begin{gather}
\i u_t+u_{xx}+\bigl(|u|^2+|v|^2\bigr)u=0, \nonumber\\
\i v_t+v_{xx}+\bigl(|u|^2+|v|^2\bigr)v=0,\qquad t\in \mathbb{R},\quad x\in \mathbb{T},\nonumber\\
u(0, x)=f(x), \qquad v(0, x)=g(x). \label{ibv-mana}
\end{gather}
Assume that the initial datum $f$ and $g$ are of bounded variation, then we have
\begin{itemize}\itemsep=0pt
\item[$({\rm i})$] $u(t, x)$ and $v(t, x)$ are continuous function of $x$ at each irrational time.
\item[$({\rm ii})$] If the initial data $f, g\notin\bigcup_{\epsilon>0}^{}H^{\frac{1}{2}+\epsilon}$, then for each irrational time, either the real part or the imaginary part of the graph of $u(t,x)$ and $v(t,x)$ have upper Minkowski dimension $3/2$.
\end{itemize}
\end{Theorem}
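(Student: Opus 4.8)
The plan is to treat the Manakov system as a nonlinear perturbation of the linear Schr\"{o}dinger equation, and to show that the fractalization is governed entirely by the linear evolution while the nonlinear contribution is smoother. Writing $u(t,x)={\rm e}^{\i t\partial_x^2}f+(u-{\rm e}^{\i t\partial_x^2}f)$, and similarly for $v$, the strategy is to isolate the ``linear flow of the initial data'' term, which is exactly the object controlled by Lemma~\ref{fra-lin-lem1}, Lemma~\ref{fra-lin-lem2} and Lemma~\ref{fra-lin-lem3} with $P(k)=k^2$ (so $d=2$, giving $2^{1-d}=1/2$ and dimension bounds centred on $3/2$), and to prove that the remainder, the Duhamel nonlinear term, lands in a smoother Sobolev space $H^s$ with $s>1/2$. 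Since a function in $H^{1/2+}$ has a graph of fractal dimension $3/2-$ by the same Besov embedding argument used in Theorem~\ref{thm-fra-lin}, adding such a smooth remainder cannot change the Minkowski dimension of the graph, so the dimension is inherited from the linear part.

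First I would set up the Duhamel formula for \eqref{ibv-mana}, writing $u=\mathrm{e}^{\i t\partial_x^2}f-\i\int_0^t \mathrm{e}^{\i(t-s)\partial_x^2}N_u(s)\,{\rm d}s$ with $N_u=(|u|^2+|v|^2)u$ and symmetrically for $v$. For part (i), the linear flow term is continuous at irrational times by the argument of Theorem~\ref{thm-fra-lin} part (i) specialised to $P(k)=k^2$ (the $H^j_{N,t}$ sums converge uniformly by Lemma~\ref{fra-lin-lem1}), while I would show the nonlinear Duhamel term lies in $C^0$ by establishing it lies in $H^s$ for some $s>1/2$, which embeds into continuous functions. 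For part (ii), the linear term ${\rm e}^{\i t\partial_x^2}f$ has, by Lemma~\ref{fra-lin-lem2} (with $P(k)=k^2$, which is not odd and satisfies $P(0)=0$) and Lemma~\ref{fra-lin-lem3}, a graph of dimension bounded below by $2-(2r_0-2^{1-d})=3/2$ for almost every $t$ when $r_0=1/2$; combined with the upper bound $2-2^{1-d}=3/2$ from the $C^{1/2-}$ regularity of the linear flow, the linear part has dimension exactly $3/2$. The crux is then the smoothing claim: the nonlinear remainder belongs to $H^s$ with $s>1/2$.

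The hard part will be this nonlinear smoothing estimate, and it is precisely where the multi-component coupling complicates matters relative to the scalar NLS treatment of \cite{ET13-nls}. The plan is to work in the Bourgain spaces $X^{s,b}$ adapted to the dispersion $\varphi(k)=k^2$, using the restricted norm $\|\cdot\|_{X^{s,b}_\delta}$ on a short time interval $[-\delta,\delta]$ and then iterating over $[0,T]$. I would prove a trilinear estimate of the form $\|N_u\|_{X^{s,b-1}_\delta}\lesssim \|u\|_{X^{s,b}_\delta}\|u\|_{X^{s,b}_\delta}\|u\|_{X^{s,b}_\delta}+\cdots$ summing over the three types of products $|u|^2u$, $|v|^2u$, and the mixed terms, to obtain local well-posedness in $H^s$ and, more importantly, an estimate showing the Duhamel term gains regularity, landing in $H^{s'}$ for some $s'>1/2$ whenever the data are in $H^{s}$, $s>1/2$. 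As flagged in the introduction, the periodic Fourier transform of the coupled nonlinearity produces an extra \emph{linear} resonant contribution (the $k'=k''$ diagonal in the convolution of $|u|^2$), absent in the analysis of \cite{ET13-nls, ET13-kdv}; the remedy is a gauge-type transformation $u\mapsto{\rm e}^{\i\theta(t)}u$, $v\mapsto{\rm e}^{\i\theta(t)}v$ removing the common mean-value term $\|u\|_{L^2}^2+\|v\|_{L^2}^2$ (which is conserved, so $\theta$ is linear in $t$), after which the surviving nonlinearity is genuinely trilinear and its Duhamel integral is smoothing. Because the gauge factor is spatially constant, it does not alter the graph's fractal dimension, so the dimension statement transfers back to the original $u$, $v$.
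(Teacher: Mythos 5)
Your overall strategy --- split the solution into a linear evolution of the data plus a Duhamel remainder, prove the remainder is smoother than $H^{1/2}$, and read the dimension off the linear part --- is the same as the paper's. The gap is the step where you claim a scalar gauge transformation removes the resonant linear terms; this is exactly the point the paper singles out as the obstruction to transplanting the scalar NLS argument of \cite{ET13-nls}, and your fix does not work. Writing out the resonances of the coupled term $\widehat{|v|^2u}(t,k)=\sum_{k_1,k_2}\hat v(t,k_1)\overline{\hat v(t,k_2)}\hat u(t,k-k_1+k_2)$, there are \emph{two} families: $k_2=k_1$, which yields the diagonal term $\frac{1}{2\pi}\Vert v\Vert_{L^2}^2\,\hat u(t,k)$, and $k_1=k$, which yields the \emph{off-diagonal} term $\frac{1}{2\pi}\langle v,u\rangle_{L^2}\,\hat v(t,k)$; symmetrically, the $v$-equation picks up $\frac{1}{2\pi}\langle u,v\rangle_{L^2}\,\hat u(t,k)$. (In the scalar case both resonance families are proportional to $\hat u(k)$, which is why the phase gauge works there; here they are not.) A common phase ${\rm e}^{\i\theta(t)}$ applied to both components leaves these cross terms untouched --- indeed it cannot even remove the diagonal ones, since their coefficients $\frac{1}{\pi}\Vert f\Vert_{L^2}^2+\frac{1}{2\pi}\Vert g\Vert_{L^2}^2$ and $\frac{1}{\pi}\Vert g\Vert_{L^2}^2+\frac{1}{2\pi}\Vert f\Vert_{L^2}^2$ differ unless $\Vert f\Vert_{L^2}=\Vert g\Vert_{L^2}$ --- and no choice of scalar phases for the two components can remove a term in the $u$-equation proportional to $\hat v$. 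These surviving terms are linear with constant coefficients (the inner products are conserved in time), and they have \emph{no} smoothing: the derivative gain in Lemma~\ref{R} comes from the non-resonant restriction $k_1\neq k$, $k_2\neq k_1$ in~\eqref{ruv}, whereas the Duhamel integral of a term $c\,\hat v(s,k)$ gains nothing --- e.g., if $v$ were a free wave ${\rm e}^{\i s\partial_x^2}g$, the integral equals $ct\,{\rm e}^{\i t\partial_x^2}g$, exactly as rough as $g$. So your remainder is not in $H^{1/2+}$, and both part (i) and part (ii) collapse as argued.

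The repair requires diagonalizing the full constant-coefficient $2\times 2$ resonant system rather than gauging phases. The paper does this by deriving a second-order ODE in time for each of $\hat u$, $\hat v$ and applying Duhamel twice, which produces a representation with two phase functions $P_{1,2}(k)=k^2+\frac{3}{4\pi}\bigl(\Vert f\Vert_{L^2}^2+\Vert g\Vert_{L^2}^2\bigr)\mp\frac{1}{2}\sqrt{\Delta}$ and linear parts $\mathcal{L}^u[f,g]$, $\mathcal{L}^v[f,g]$ mixing $\hat f$ and $\hat g$ with both phases; the smoothing estimates (your trilinear step, via Lemmas~\ref{n-xsb-1} and~\ref{R}) are then applied only to the genuinely non-resonant and $\rho$-type pieces, and the fractal-dimension argument must be run on the two-frequency linear combination, as in Theorem~\ref{thm-fra-lin}, not on ${\rm e}^{\i t\partial_x^2}f$ alone. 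Equivalently, you could replace your scalar gauge by a constant \emph{matrix} change of dependent variables mixing $u$ and $v$ that diagonalizes the resonant matrix (this is possible since $\varphi_2\varphi_4\geq 0$ makes $\Delta\geq 0$); your proposal only decouples in the special case $f=g$ (or $\langle f,g\rangle_{L^2}=0$), which is the situation covered by the paper's Remark following the proof, not the general theorem.
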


\subsection{Proof of Theorem \ref{thm-mana}}

The proof of this theorem relies on the following preliminary lemmas, some of which were originally proved in the frame of the KdV equation \cite{Bou93-kdv, Bou98} and the NLS equation \cite{ET13-nls}; see also~\cite{ET16}, and remain valid in present setting.

The first lemma is a direct corollary of \cite[Lemma 4.6]{ET16}, which together with the second lemma will be used to describe the behavior of the nonlinear part evolution under the $X_\delta^{s, b}$ norm.

\begin{Lemma}[\cite{ET16}]\label{n-xsb-1}
Denote
 \[
 W_{t}^{\omega(k)}F(t, x):=\sum_{k}{\rm e}^{-\i \omega(k)t}\hat{F}(\tau, k){\rm e}^{\i kx},
 \]
where $\omega(k)=P(k)+\alpha$, with $P(k)$ being an integral polynomial with order $d\geq 2$ and $\alpha\in\mathbb{R}$.
Let $-\frac{1}{2}<b' \leq 0$ and $b=b'+1$. Then for any $\delta<1$,
\[
\biggl\Vert \int_0^{t} \, W_{t-\tau}^{\omega(k)}F(\tau, x)\,{\rm d}\tau \biggr\Vert_{X_\delta^{s, b}} \lesssim \Vert F\Vert_{X_\delta^{s, b'}}.
\]
\end{Lemma}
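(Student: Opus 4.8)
The plan is to recognize this as the standard $X^{s,b}$ energy (Duhamel) estimate, whose proof is essentially insensitive to the precise dispersion relation; this is exactly why it descends as a direct corollary of the Schr\"odinger case treated in \cite{ET16}. First I would pass from the restricted norm $X_\delta^{s,b}$ to the global norm $X^{s,b}$. Fix a cutoff $\psi\in C_c^\infty(\mathbb{R})$ with $\psi\equiv 1$ on $[-1,1]$; since $\delta<1$, on $t\in[-\delta,\delta]$ the Duhamel term coincides with $\psi(t)\int_0^t W_{t-\tau}^{\omega(k)}F(\tau,x)\,{\rm d}\tau$, so it suffices to bound this globally defined object and then take the infimum over extensions of $F$ to recover the restricted estimate.

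Next I would conjugate away the linear propagator. Writing $\int_0^t W_{t-\tau}^{\omega(k)}F\,{\rm d}\tau=W_t^{\omega(k)}\int_0^t W_{-\tau}^{\omega(k)}F\,{\rm d}\tau$ and using that $u\mapsto W_{-t}^{\omega(k)}u$ carries $X^{s,b}$ isometrically onto the flat Bourgain space with weight $\langle k\rangle^s\langle\tau\rangle^b$ (the modulation $\tau+\omega(k)$ becomes the bare frequency $\tau$), the estimate decouples in $k$: with $\langle k\rangle^s$ a passive multiplier, it reduces to the scalar-in-time bound $\bigl\Vert\psi(t)\int_0^t h(s)\,{\rm d}s\bigr\Vert_{H_t^b}\lesssim\Vert h\Vert_{H_t^{b'}}$, uniformly in $k$, where $h$ is the $k$-th time-dependent Fourier coefficient of $W_{-t}^{\omega(k)}F$. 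Because both the constant shift $\alpha$ and the integral-polynomial structure of $P(k)$ vanish under this conjugation, they play no role, which is precisely the mechanism making the claim a corollary of the fixed-dispersion lemma in \cite{ET16}.

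The heart of the matter is then the one-dimensional antiderivative estimate. On the Fourier side $\int_0^t h(s)\,{\rm d}s=\int\frac{{\rm e}^{\i t\lambda}-1}{\i\lambda}\hat h(\lambda)\,{\rm d}\lambda$, and I would split according to the modulation size $|\lambda|$. For $|\lambda|\gtrsim1$ I write $\frac{{\rm e}^{\i t\lambda}-1}{\i\lambda}$ as the two pieces $\frac{{\rm e}^{\i t\lambda}}{\i\lambda}$ and $-\frac{1}{\i\lambda}$: multiplication by $\psi(t)$ becomes convolution against the rapidly decaying $\hat\psi$, while the factor $\langle\lambda\rangle^{-1}$ supplies exactly the gain matching $b=b'+1$, the constraint $b'>-\tfrac12$ ensuring the resulting Cauchy--Schwarz integrals converge. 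For $|\lambda|\lesssim1$ I Taylor-expand $\frac{{\rm e}^{\i t\lambda}-1}{\i\lambda}=\sum_{n\geq1}\frac{\i^{n-1}t^{n}\lambda^{n-1}}{n!}$, so each term is the Schwartz function $\psi(t)t^n\in H_t^b$ times $\lambda^{n-1}$ with $|\lambda|\leq1$, and $\int_{|\lambda|\lesssim1}|\hat h(\lambda)|\,{\rm d}\lambda\lesssim\Vert h\Vert_{H_t^{b'}}$ again by Cauchy--Schwarz using $b'>-\tfrac12$, with the series in $n$ summable. Reassembling in $\ell_k^2$ and integrating the weighted $L_\tau^2$ norm recovers the claimed $X^{s,b}$ bound.

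The main obstacle is the sharp bookkeeping at the near-endpoint regularity: one must check that the single derivative gained from the $1/\lambda$ multiplier exactly compensates the full unit jump from $b'$ up to $b=b'+1$, while the low-modulation Taylor series remains summable. Both hinge on the strict inequality $b'>-\tfrac12$, which is what keeps the integrals $\int|\hat h(\lambda)|\langle\lambda\rangle^{-1}\,{\rm d}\lambda$ and $\int_{|\lambda|\lesssim1}|\hat h(\lambda)|\,{\rm d}\lambda$ finite. Tracking this threshold carefully, rather than any feature of the dispersion relation, is the delicate part of the argument.
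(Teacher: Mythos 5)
Your proof is correct, but it takes a different route from the paper in one structural sense: the paper offers no proof of this lemma at all~--- it is quoted as a direct corollary of \cite[Lemma~4.6]{ET16}, the entire (implicit) content of the reduction being the observation you also make, namely that after conjugating by the group $W_t^{\omega(k)}$ the modulation weight $\langle\tau-\omega(k)\rangle$ becomes $\langle\tau\rangle$, so neither the constant shift $\alpha$ (which merely translates $\tau$ by a fixed amount, an isometry of the weighted $L^2_\tau$ norm) nor the integral-polynomial structure of $P(k)$ enters the estimate. What you do instead is reconstitute, from scratch, the standard Bourgain-space Duhamel estimate that underlies the cited lemma: cutoff in time to pass between $X^{s,b}_\delta$ and $X^{s,b}$, conjugation to the flat space, reduction to the one-dimensional antiderivative bound $\bigl\Vert\psi(t)\int_0^t h\,{\rm d}s\bigr\Vert_{H^b_t}\lesssim\Vert h\Vert_{H^{b'}_t}$, and the high/low modulation split in which the constant-in-$t$ piece $\psi(t)\int_{|\lambda|\geq1}\hat h(\lambda)/(\i\lambda)\,{\rm d}\lambda$ is exactly where $b'>-\tfrac12$ (equivalently $2b>1$) is needed for the Cauchy--Schwarz integral $\int|\lambda|^{-2}\langle\lambda\rangle^{2-2b}\,{\rm d}\lambda$ to converge, and the low-modulation Taylor series is summable against $\Vert\psi(t)t^n\Vert_{H^b}/n!$. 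All of these steps are sound, including the infimum-over-extensions argument for the restricted norm (valid because for $t\in[-\delta,\delta]$ the Duhamel integral only samples the extension on $[-\delta,\delta]$). What the paper's route buys is brevity: a one-line reduction to a fixed-dispersion lemma. What yours buys is self-containedness, at the cost of reproducing an argument the paper deliberately outsources; in a referee's eyes both are acceptable, and your write-up correctly identifies the single mechanism (invariance of the estimate under the conjugation) that justifies the paper's ``direct corollary'' claim.
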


In addition, to prove the theorem, we need to establish the following lemma which is corollary of Proposition 1 in \cite{ET13-nls}.

\begin{Lemma}\label{R}
 Denote
 \begin{equation}\label{ruv}
 \widehat{R[u, v]}(t, k):=\sum_{k_1\neq k,\, k_2\neq k_1} \hat{u}(t, k_1)\overline{\hat{u}(t, k_2)}\hat{v}(t, k-k_1+k_2).\end{equation}
 Then, for fixed $s>0$ and $a<\min \big(2s, \frac{1}{2} \big)$, we have
\[ \Vert R[u, v] \Vert_{X^{s+a, b-1}} \lesssim \Vert u\Vert_{X^{s, b}}^2\Vert v\Vert_{X^{s, b}},\]
provided that $0<b-\frac{1}{2}$ is sufficiently small. The same inequality holds for the restricted norms.
\end{Lemma}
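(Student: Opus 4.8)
The plan is to reduce the trilinear estimate on $R[u,v]$ to a multiplier bound that can be handled by Cauchy--Schwarz and a careful count of resonances, following the strategy used by Erdo\u{g}an and Tzirakis for the scalar NLS. First I would pass to the Fourier side: writing out $\Vert R[u,v]\Vert_{X^{s+a,b-1}}^2$ via the definition of the $X^{s,b}$ norm, the quantity becomes a weighted $\ell^2_k L^2_\tau$ sum over the frequency $k$ and dual time variable $\tau$, where the summand is the convolution in $(\tau,k)$ of the three factors $\hat u(\tau_1,k_1)$, $\overline{\hat u(\tau_2,k_2)}$, $\hat v(\tau_3,k-k_1+k_2)$ subject to the constraints $k_1\neq k$ and $k_2\neq k_1$. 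The key structural observation — the reason the two constraints are imposed — is that they remove exactly the resonant contributions in which the phase $\tau-\varphi(k)$ fails to be controlled by the phases of the three inputs. The plan is to extract from the denominator $\langle\tau-P(k)\rangle^{1-b}$ (with $P(k)=k^2$ here) enough decay, using the algebraic identity for the resonance function
\[
P(k)-P(k_1)+P(k_2)-P(k-k_1+k_2)=2(k-k_1)(k_2-k_1),
\]
which is nonzero precisely because $k_1\neq k$ and $k_2\neq k_1$; this is the gain that powers the whole estimate.

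Next I would set $u=\langle k\rangle^{-s}\langle\tau-P(k)\rangle^{-b}U$ and similarly for $v$, so that the target inequality is equivalent to an $L^2$ boundedness statement for a trilinear form with explicit multiplier
\[
M=\frac{\langle k\rangle^{s+a}}{\langle k_1\rangle^s\langle k_2\rangle^s\langle k-k_1+k_2\rangle^s}\cdot\frac{1}{\langle\tau-P(k)\rangle^{1-b}\,\langle\sigma_1\rangle^b\langle\sigma_2\rangle^b\langle\sigma_3\rangle^b},
\]
where $\sigma_i$ denote the three modulations $\tau_i-P(k_i)$. I would then apply Cauchy--Schwarz in the $(\tau,k)$ variables to peel off one factor, reducing matters to showing that a certain sum of $M^2$ over the remaining frequencies is bounded uniformly. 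The largest among $\{|\tau-P(k)|,|\sigma_1|,|\sigma_2|,|\sigma_3|\}$ dominates the resonance function $2(k-k_1)(k_2-k_1)$, so in each case one of the modulation weights can be traded for a gain of $\langle(k-k_1)(k_2-k_1)\rangle^{b-1/2-}$, and this is what must absorb the derivative loss coming from $\langle k\rangle^{s+a}$ in the numerator.

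The main obstacle, as the paper itself flags, is that in the Manakov setting the periodic Fourier transform of the nonlinear term produces a linear coupled term in addition to the genuinely trilinear part; the operator $R[u,v]$ here is precisely the trilinear remainder after that linear piece has been split off, and the two exclusions $k_1\neq k,\ k_2\neq k_1$ are exactly what guarantees the resonance function does not vanish. Consequently the delicate step is the summation/counting estimate: after reducing to $\sup_k\sum_{k_1,k_2}M^2$, one must show this is $\lesssim 1$ for $a<\min(2s,\tfrac12)$. Here the constraint $a<\tfrac12$ enters through the divisor-type bound on the number of factorizations of the resonance function, and $a<2s$ enters through the need to redistribute the $s$-regularity among the three inputs when one of the frequencies $k_1,k_2,k-k_1+k_2$ is comparable to $k$. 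I would carry out the case analysis according to which modulation is largest, applying in each case the elementary sum
\[
\sum_{n}\frac{1}{\langle n\rangle^{1+2(b-1/2)}}<\infty
\]
together with the divisor bound, and finally note that since all the reductions localize in time only through the weights $\langle\sigma_i\rangle^{-b}$, the identical argument applies verbatim to the restricted norms $X_\delta^{s,b}$, giving the last sentence of the statement.
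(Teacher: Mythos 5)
Your proposal is correct and follows essentially the same route as the paper: both pass to the Fourier side, normalize $f_1=\langle k\rangle^s\big\langle \tau-k^2\big\rangle^b|\hat{u}|$, $f_2=\langle k\rangle^s\big\langle \tau-k^2\big\rangle^b|\hat{v}|$, and apply Cauchy--Schwarz to split the estimate into a uniform bound on $\sup_{k,\tau}\int\sum M^2$ times a convolution factor $\bigl\Vert f_1^2\ast f_1^2\ast f_2^2\bigr\Vert_{l_k^1L_\tau^1}$, which is controlled by Young's inequality. The only difference is one of packaging: the paper imports the key multiplier bound $\sup_{k,\tau}\int\sum M^2\lesssim 1$ (for $0<b-\frac{1}{2}$ small and $0\leq a<\min\big(2s,\frac{1}{2}\big)$) directly from the proof of Proposition~1 of Erdo\u{g}an--Tzirakis, whereas you propose to re-derive it via the resonance factorization $2(k-k_1)(k_1-k_2)$, the case analysis on the largest modulation, and divisor counting---which is precisely the argument carried out in that cited proof.
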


\begin{proof}
According to the definition of $R[u, v]$~\eqref{ruv}, we arrive at
\begin{gather*}
\Vert R[u, v] \Vert_{X^{s+a, b-1}}^2\\
\qquad {}
=\biggl\Vert \int_{\tau_1, \tau_2}\sum_{k_1\neq k,\,k_2\neq k_1}\frac{\langle k\rangle^{s+a}\hat{u}(\tau_1, k_1)\overline{\hat{u}(\tau_2, k_2)}\hat{v}(\tau-\tau_1+\tau_2, k-k_1+k_2)}{\langle \tau-k^2\rangle^{1-b}}\,{\rm d}\tau_1{\rm d}\tau_2 \biggr\Vert_{L_{\tau}^2l_k^2}^2.
\end{gather*}

Let
\[
f_1(\tau, k)=\vert \hat{u}(\tau, k)\vert\langle k\rangle^s\big\langle \tau-k^2\big\rangle^b,\qquad
f_2(\tau, k)=\vert \hat{v}(\tau, k)\vert \langle k\rangle^s\big\langle \tau-k^2\big\rangle^b.
\]
It suffices to prove that
\begin{gather*}
I=\biggl\Vert \int_{\tau_1, \tau_2}\sum_{k_1\neq k,\,k_2\neq k_1} M(\tau_1, \tau_2, \tau, k_1, k_2, k)f_1(\tau_1, k_1) f_1(\tau_2, k_2)\\
\hphantom{I =\biggl\Vert \int_{\tau_1, \tau_2}\sum_{k_1\neq k,\,k_2\neq k_1} }{}\times
f_2(\tau-\tau_1+\tau_2, k-k_1+k_2)\,{\rm d}\tau_1{\rm d}\tau_2 \biggr\Vert_{L_{\tau}^2l_k^2 }^2\\
\hphantom{I}{} \lesssim \Vert f_1 \Vert_{L^2}^4\Vert f_2 \Vert_{L^2}^2=\Vert u \Vert_{X^{s, b}}^4\Vert v \Vert_{X^{s, b}}^2,
\end{gather*}
where
\begin{gather*}
M(\tau_1, \tau_2, \tau, k_1, k_2, k) =\frac{\langle k\rangle^{s+a}\langle k_1\rangle^{-s}\langle k_2\rangle^{-s}\langle k-k_1+k_2\rangle^{-s}}{\langle \tau-k^2\rangle^{1-b}\langle \tau_1-k_1^2\rangle^b\langle \tau_2-k_2^2\rangle^b\langle \tau-\tau_1+\tau_2-(k-k_1+k_2)^2\rangle^b}.
\end{gather*}

To verify the claim, first by Cauchy--Schwarz inequality, we estimate the above norm
\begin{gather*}
I\leq \sup_{k, \tau}\biggl(\int_{\tau_1, \tau_2}^{}\sum_{k_1\neq k,\, k_2\neq k_1}^{}M^2(\tau_1, \tau_2,\tau, k_1, k_2, k)\,{\rm d}\tau_1{\rm d}\tau_2\biggr)\\
\hphantom{I\leq}{} \times\biggl\Vert \int_{\tau_1, \tau_2}^{}\sum_{k_1\neq k,\, k_2\neq k_1}^{}f_1^2(\tau_1, k_1)f_1^2(\tau_2, k_2)f_2^2(\tau-\tau_1+\tau_2, k-k_1+k_2)\,{\rm d}\tau_1{\rm d}\tau_2 \biggr\Vert_{L_{\tau}^1l_k^1},
\end{gather*}
where, on the one hand,
directly using the estimate consequence given in the proof of Proposition~1 in~\cite[pp.~1087--1088]{ET13-nls}, one has
\[
\sup_{k,\tau}\bigg(\int_{\tau_1,\tau_2}^{}\sum_{k_1\neq k,\,k_2\neq k_1}^{}M^2(\tau_1,\tau_2,\tau, k_1, k_2, k)\,{\rm d}\tau_1{\rm d}\tau_2\bigg)\lesssim 1,
\]
provided $0<b-1/2$ sufficiently small for any given $s>0$, and $0\leq a<\min(2s, 1/2)$, on the other hand, by
Young inequality,
\begin{gather*}
\biggl\Vert \int_{\tau_1, \tau_2}^{}\sum_{k_1\neq k,\, k_2\neq k_1}^{}f_1^2(\tau_1, k_1)f_1^2(\tau_2, k_2)f_2^2(\tau-\tau_1+\tau_2, k-k_1+k_2)\,{\rm d}\tau_1{\rm d}\tau_2 \biggr\Vert_{L_{\tau}^1l_k^1}\\
\qquad{}=\bigl\Vert f_1^2\ast f_1^2\ast f_2^2 \bigr\Vert_{l_k^1L_{\tau}^1} \leq \Vert f_1 \Vert_{L_2}^4 \Vert f_2 \Vert_{L_2}^2=\Vert u \Vert_{X^{s, b}}^4 \Vert v \Vert_{X^{s, b}}^2,
\end{gather*}
verifying the claim. This completes the proof.
\end{proof}

\begin{proof}[Proof of Theorem \ref{thm-mana}]
(i) First of all, we claim that the $L^2$-norm of the solutions~$u(t, x)$ and~$v(t, x)$ to the system~\eqref{mana} on a periodic domain as well as $\langle u, v\rangle_{L^2}$ and $\langle v, u\rangle_{L^2}$ are constants in time. In fact, for instance,{\samepage
\begin{gather*}
\frac{{\rm d}}{{\rm d}t}\Vert u \Vert _{L^2}^2=\frac{{\rm d}}{{\rm d}t}\int_\mathbb{T}\vert u \vert^2\,{\rm d}x=\int_\mathbb{T}(u_t\bar{u}+u\bar{u}_t)\,{\rm d}x\\
\hphantom{\frac{{\rm d}}{{\rm d}t}\Vert u \Vert _{L^2}^2}{} =\i \int_\mathbb{T}(u_{xx}\bar{u}-\bar{u}_{xx}u)\,{\rm d}x=\i \int_\mathbb{T}(u_{x}\bar{u}-\bar{u}_{x}u)_x\,{\rm d}x=0,\\
\frac{{\rm d}}{{\rm d}t}\langle u, v\rangle_{L^2}^2=\frac{{\rm d}}{{\rm d}t}\int_\mathbb{T}\,\bar{u}v\,{\rm d}x=\int_\mathbb{T}(\bar{u}_tv+\bar{u}v_t)\,{\rm d}x\\
\hphantom{\frac{{\rm d}}{{\rm d}t}\langle u, v\rangle_{L^2}^2}{}=\i \int_\mathbb{T}(\bar{u}v_{xx}-\bar{u}_{xx}v)\,{\rm d}x=\i \int_\mathbb{T}(\bar{u}v_x-\bar{u}_xv)_x\,{\rm d}x=0,
\end{gather*}
due to the periodicity of our problem, verifying the claim.}

By applying the Fourier transform of system~\eqref{mana}, we arrive at the following system involving the associated Fourier transforms $\hat{u}(t, k)$ and $\hat{v}(t, k)$,
\begin{gather}
\hat{u}_t=-\i \big(k^2\hat{u}-\widehat{|u|^2u}-\widehat{|v|^2u}\big),\nonumber\\
\hat{v}_t=-\i \big(k^2\hat{v}-\widehat{|u|^2v}-\widehat{|v|^2v}\big).\label{mana-fourier}
\end{gather}
Using Plancherel's theorem and the conservation of the $L^2$-norm, we deduce that
\begin{gather*}
\widehat{|u|^2u}(t, k)=\sum_{k_1, k_2}\hat{u}(t, k_1)\overline{\hat{u}(t, k_2)}\hat{u}(t, k-k_1+k_2)\\
\hphantom{\widehat{|u|^2u}(t, k)}{}=\frac{1}{\pi}\|f\|_{L^2}^2\hat{u}(t, k)+\widehat{\rho[u, u]}(t, k)+\widehat{R[u, u]}(t, k),\\
\widehat{|u|^2v}(t, k)=\sum_{k_1, k_2}\hat{u}(t, k_1)\overline{\hat{u}(t, k_2)}\hat{v}(t, k-k_1+k_2)\\
\hphantom{\widehat{|u|^2v}(t, k)}{}=\frac{1}{2\pi}\big(\langle f, g\rangle_{L^2}^2\hat{u}(t, k)+\|f\|_{L^2}^2\hat{v}(t, k)\big)+\widehat{\rho[u, v]}(t, k)+\widehat{R[u, v]}(t, k),
\end{gather*}
where $\widehat{\rho[u, v]}(t, k):=-|\hat{u}(t, k)|^2\hat{v}(t, k)$ and $\widehat{R[u, v]}(t, k)$ is defined in~\eqref{ruv}. Therefore, it follows from~\eqref{mana-fourier} that
\begin{gather}
\hat{u}_t=\i\varphi_1(k)\hat{u}+\i\varphi_2(k)\hat{v}+\widehat{N_1[u, v]},\nonumber\\
\hat{v}_t=\i\varphi_3(k)\hat{v}+\i\varphi_4(k)\hat{u}+\widehat{N_2[u, v]},\label{mana-fourier-N12}
\end{gather}
where
\begin{gather*}
\varphi_1(k)=-k^2+\frac{1}{\pi}\|f\|_{L^2}^2+\frac{1}{2\pi}\|g\|_{L^2}^2,\qquad
\varphi_2(k)=\frac{1}{2\pi}\langle g, f\rangle_{L^2}^2,\\
\varphi_3(k)=-k^2+\frac{1}{\pi}\|g\|_{L^2}^2+\frac{1}{2\pi}\|f\|_{L^2}^2,\qquad \varphi_4(k)=\frac{1}{2\pi}\langle f, g\rangle_{L^2}^2,
\end{gather*}
and
\begin{gather}
N_1[u, v]=\rho[u, u]+\rho[v, u]+R[u, u]+R[v, u],\nonumber\\
N_2[u, v]=\rho[u, v]+\rho[v, v]+R[u, v]+R[v, v].\label{N12}
\end{gather}
In view of~\eqref{mana-fourier-N12}, we further arrive at
\begin{equation}\label{utt}
\hat{u}_{tt}-\i(\varphi_1+\varphi_3)\hat{u}_t+({\varphi_2\varphi_4-\varphi_1\varphi_3})\hat{u}-\big(\hat{N}_{1}\big)_t+\i\varphi_3\hat{N}_1-\i\varphi_2\hat{N}_2=0,
\end{equation}
and, similarly,
\begin{equation}\label{vtt}
\hat{v}_{tt}-\i(\varphi_1+\varphi_3)\hat{v}_t+({\varphi_2\varphi_4-\varphi_1\varphi_3})\hat{v}-\big(\hat{N}_{2}\big)_t+\i\varphi_1\hat{N}_2-\i\varphi_4\hat{N}_1=0.
\end{equation}

Further set
\[
\omega_1(k)=-\frac{\varphi_1+\varphi_3+\sqrt{\Delta(k)}}{2}\qquad \mathrm{and}\qquad
\omega_2(k)=-\frac{\varphi_1+\varphi_3-\sqrt{\Delta(k)}}{2},\]
 with $\Delta(k)=(\varphi_1-\varphi_3)^2+4\varphi_2\varphi_4$.
Applying Duhamel's formula to the equation~\eqref{utt} twice, we finally arrive at
\begin{gather}
\hat{u}(t, k)={\rm e}^{-\i\omega_2 t}\hat{f}+\frac{{\rm e}^{-\i\omega_1 t}-{\rm e}^{-\i\omega_2 t}}{\i(\omega_2-\omega_1)}\big(\i (\varphi_1+\omega_2)\hat{f}+\i \varphi_2\hat{g}+\widehat{N_1[u, v]}(0, k)\big)\nonumber\\
\hphantom{\hat{u}(t, k)=}{}+\int_{0}^{t}{\rm e}^{\i\omega_2(\tau_2-t)}\int_{0}^{\tau_2}A(\tau_1, k){\rm e}^{\i\omega_1(\tau_1-\tau_2)}\,{\rm d}\tau_1{\rm d}\tau_2,\label{u-hat-1}
\end{gather}
 where $A(t, k)=\big(\hat{N}_{1}\big)_t-\i\varphi_3\hat{N}_1+\i\varphi_2\hat{N}_2$. About the second term, we have
\begin{gather*}
\int_{0}^{t} {\rm e}^{\i\omega_2(\tau_2-t)}\int_{0}^{\tau_2} A(\tau_1, k){\rm e}^{\i\omega_1(\tau_1-\tau_2)}\,{\rm d}\tau_1{\rm d}\tau_2\\
={\rm e}^{-\i\omega_2 t}\int_{0}^{t} {\rm e}^{\i(\omega_2-\omega_1)\tau_2 }\biggl[\int_{0}^{\tau_2} \big({\rm e}^{\i \omega_1\tau_1}\hat{N_1}\big)_{\tau_1}\,{\rm d}\tau_1-\i\int_{0}^{\tau_2} \big((\omega_1+\varphi_3)\hat{N_1}-\varphi_2\hat{N_2}\big){\rm e}^{\i \omega_1\tau_1}\,{\rm d}\tau_1\biggr]{\rm d}\tau_2\\
=\int_{0}^{t} {\rm e}^{\i\omega_2(\tau_2-t) }\hat{N_1}(\tau_2, k)\,{\rm d}\tau_2-\frac{{\rm e}^{-\i\omega_1 t}-{\rm e}^{-\i\omega_2 t}}{\i(\omega_2-\omega_1)}\widehat{N_1[u, v]}(0, k)\\
\hphantom{=}{} -\i \int_{0}^{t} {\rm e}^{\i \omega_2(\tau_2-t) }\,\int_{0}^{\tau_2} \big((\omega_1+\varphi_3)\hat{N_1}-\varphi_2\hat{N_2}\big){\rm e}^{\i \omega_1(\tau_1-\tau_2)}\,{\rm d}\tau_1{\rm d}\tau_2.
\end{gather*}
 Substituting it into~\eqref{u-hat-1} yields
\begin{gather}
\hat{u}(t, k)={\rm e}^{-\i\omega_2(k)t}\hat{f}(k)+\frac{{\rm e}^{-\i\omega_1(k)t}-{\rm e}^{-\i\omega_2(k)t}}{\omega_2-\omega_1}\big[(\varphi_1(k)+\omega_2(k))\hat{f}(k)+\varphi_2(k)\hat{g}(k)\big]\nonumber\\
\quad{}-\i\int_{0}^{t} {\rm e}^{\i\omega_2(k)(\tau_2-t)}\int_{0}^{\tau_2} \big[(\omega_1(k)+\varphi_3(k))\hat{N}_1(\tau_1, k)-\varphi_2(k)\hat{N}_2(\tau_1, k)\big]{\rm e}^{\i\omega_1(k)(\tau_1-\tau_2)}\,{\rm d}\tau_1{\rm d}\tau_2\nonumber\\
\quad{} +\int_{0}^{t} {\rm e}^{\i\omega_2(k)(\tau-t)}\hat{N}_1(\tau, k)\,{\rm d}\tau.\label{u-fou}
\end{gather}
 Similarly, solving $\hat{v}(t, k)$ from~\eqref{vtt} gives rise to
\begin{gather*}
\hat{v}(t, k)={\rm e}^{-\i\omega_2(k)t}\hat{g}(k)+\frac{{\rm e}^{-\i\omega_1(k)t}-{\rm e}^{-\i\omega_2(k)t}}{\omega_2-\omega_1}\bigl[(\varphi_3(k)+\omega_2(k))\hat{f}(k)+\varphi_4(k)\hat{g}(k)\bigr]\\
\quad{}-\i\int_{0}^{t} {\rm e}^{\i\omega_2(k)(\tau_2-t)}\int_{0}^{\tau_2} \bigl[(\omega_1(k)+\varphi_1(k))\hat{N}_2(\tau_1, k)-\varphi_4(k)\hat{N}_1(\tau_1, k)\bigr]{\rm e}^{\i\omega_1(k)(\tau_1-\tau_2)}\,{\rm d}\tau_1{\rm d}\tau_2\\
\quad{} +\int_{0}^{t} {\rm e}^{\i\omega_2(k)(\tau-t)}\hat{N}_2(\tau, k)\,{\rm d}\tau.
\end{gather*}
Referring back the definition of $\varphi_i(k)$, $i=1, 2, 3, 4$, it is easy to see that not only $\varphi_2$ and $\varphi_4$, but also $\Delta(k)$ are constants independent of $k$, which, together with the definition of $\omega_j(k)$, $j=1, 2$, implies $\omega_1+\varphi_l$, $l=1, 3$, and $\omega_2-\omega_1$ are constants only depending on $\|f\|_{L^2}$, $\|g\|_{L^2}$, $\langle f, g \rangle_{L^2}$ and $\langle g, f \rangle_{L^2}$ as well. Therefore, the first and second term of the first integral in~\eqref{u-fou} can be rewritten as
\begin{gather*}
\int_{0}^{t} {\rm e}^{\i\omega_2(k)(\tau_2-t)}\int_{0}^{\tau_2} (\omega_1(k)+\varphi_3(k))\hat{N}_1(\tau_1, k){\rm e}^{\i\omega_1(k)(\tau_1-\tau_2)}\,{\rm d}\tau_1{\rm d}\tau_2\\
\qquad {}=-\i\frac{\omega_1+\varphi_3}{\omega_2-\omega_1}\biggl(\int_{0}^{t} {\rm e}^{\i\omega_1(k)(\tau-t)}\hat{N}_1(\tau, k)\,{\rm d}\tau-\int_{0}^{t} {\rm e}^{\i\omega_2(k)(\tau-t)}\hat{N}_1(\tau, k)\,{\rm d}\tau\biggr),\\
\int_{0}^{t} {\rm e}^{\i\omega_2(k)(\tau_2-t)}\int_{0}^{\tau_2}\,\varphi_2(k)\hat{N}_2(\tau_1, k){\rm e}^{\i\omega_1(k)(\tau_1-\tau_2)}\,{\rm d}\tau_1{\rm d}\tau_2\\
\qquad {}=-\i\frac{\varphi_2}{\omega_2-\omega_1}\biggl(\int_{0}^{t} {\rm e}^{\i\omega_1(k)(\tau-t)}\hat{N}_2(\tau, k)\,{\rm d}\tau-\int_{0}^{t} {\rm e}^{\i\omega_2(k)(\tau-t)}\hat{N}_2(\tau, k)\,{\rm d}\tau\biggr).
\end{gather*}
By Fourier inversion, the solution of~\eqref{ibv-mana} can be expressed as
\[u(t, x)=\sum_{k}\hat{u}(t, k){\rm e}^{\i kx},\qquad v(t, x)=\sum_{k}\hat{v}(t, k){\rm e}^{\i kx}.\]
We further decompose $u(t, x)$ as
\[u(t, x)=\mathcal{L}^u[f, g](t, x)+\mathcal{N}^u[u, v](t, x),\]
where the linear part
\begin{gather}
\mathcal{L}^u[f, g](t, x)=\sum_k\biggl[ {\rm e}^{-\i \omega_2(k)t}\hat{f}(k)+\big({\rm e}^{-\i\omega_1(k)t}-{\rm e}^{-\i\omega_2(k)t}\big)\nonumber\\
\hphantom{\mathcal{L}^u[f, g](t, x)=\sum_k\biggl[}{}\times \biggl(\frac{\varphi_1+\omega_2}{\omega_2-\omega_1}\hat{f}(k)+\frac{\varphi_2}{\omega_2-\omega_1}\hat{g}(k)\biggr)\biggr]{\rm e}^{\i kx}\label{lu}
\end{gather}
and the nonlinear part $\mathcal{N}^u$ can be expressed as a multiplier operator of the form
\begin{gather*}
\mathcal{N}^u[u, v](t, x)
=\frac{\varphi_3+\omega_2}{\omega_2-\omega_1}\int_{0}^{t} W_{t-\tau}^{\omega_2(k)}N_1(\tau, x)\,{\rm d}\tau-\frac{\varphi_3+\omega_1}{\omega_2-\omega_1}\int_{0}^{t} W_{t-\tau}^{\omega_1(k)}N_1(\tau, x)\,{\rm d}\tau\\
\hphantom{\mathcal{N}^u[u, v](t, x)=}{} +\frac{\varphi_2}{\omega_2-\omega_1}\biggl(\int_{0}^{t} W_{t-\tau}^{\omega_1(k)}N_2(\tau, x)\,{\rm d}\tau-\int_{0}^{t} W_{t-\tau}^{\omega_2(k)}N_2(\tau, x)\,{\rm d}\tau\biggr)
\end{gather*}
with $N_i[u, v](t, x)$ defined in~\eqref{N12}. Similarly, the solution $v(t, x)$ becomes
\[v(t, x)=\mathcal{L}^v[f, g](t, x)+\mathcal{N}^v[u, v](t, x),\]
where
\begin{gather}
\mathcal{L}^v[f, g](t, x)=\sum_k\biggl[ {\rm e}^{-\i \omega_2(k)t}\hat{g}(k)+\big({\rm e}^{-\i\omega_1(k)t}-{\rm e}^{-\i\omega_2(k)t}\big)\nonumber\\
\hphantom{\mathcal{L}^v[f, g](t, x)=\sum_k\biggl[}{}\times\biggl(\frac{\varphi_3+\omega_2}{\omega_2-\omega_1} \hat{g}(k)+\frac{\varphi_4}{\omega_2-\omega_1}\hat{f}(k)\biggr)\biggr]{\rm e}^{\i kx}\label{lv}
\end{gather}
and
\begin{gather*}
\mathcal{N}^v[u, v](t, x)
=\frac{\varphi_3+\omega_2}{\omega_2-\omega_1}\int_{0}^{t} W_{t-\tau}^{\omega_2(k)}N_2(\tau, x)\,{\rm d}\tau-\frac{\varphi_3+\omega_1}{\omega_2-\omega_1}\int_{0}^{t} W_{t-\tau}^{\omega_1(k)}N_2(\tau, x)\,{\rm d}\tau\\
\hphantom{\mathcal{N}^v[u, v](t, x)=}{} +\frac{\varphi_4}{\omega_2-\omega_1}\biggl(\int_{0}^{t} W_{t-\tau}^{\omega_1(k)}N_1(\tau, x)\,{\rm d}\tau-\int_{0}^{t} W_{t-\tau}^{\omega_2(k)}N_1(\tau, x)\,{\rm d}\tau\biggr).
\end{gather*}

We now estimate the integral in the nonlinear term. Note first that since $\Delta(k)$ is in fact constant independent on $k$, both $\omega_1$ and $\omega_2$ are second-order integral polynomials with the addition of respective constants. Thus, using Lemmas \ref{n-xsb-1} and \ref{R} successively, one has, for any $s>0$, $\delta<1$ and $b>1/2$, when $0\leq t\leq \delta$,
\begin{gather*}
\biggl\Vert \int_0^t W_{t-\tau}^{\omega_i(k)}R[u, u](\tau, x)\,{\rm d}\tau\biggr\Vert_{H^{s+a}}\lesssim \biggl\Vert \int_0^t W_{t-\tau}^{\omega_i(k)}R[u, u](\tau, x)\,{\rm d}\tau \biggr\Vert_{X_\delta^{s+a, b}}\\
\hphantom{\biggl\Vert \int_0^t W_{t-\tau}^{\omega_i(k)}R[u, u](\tau, x)\,{\rm d}\tau\biggr\Vert_{H^{s+a}}}{}\lesssim\Vert R[u, u] \Vert_{X_\delta^{s+a, b-1}}\lesssim\Vert u\Vert_{X_\delta^{s, b}}^3,\qquad i=1, 2.
\end{gather*}
While,
\begin{gather*}
\biggl\Vert \int_0^t W_{t-\tau}^{\omega_i(k)}R[v, u](\tau, x)\,{\rm d}\tau\biggr\Vert_{H^{s+a}}\lesssim \biggl\Vert \int_0^t W_{t-\tau}^{\omega_i(k)}R[v, u](\tau, x)\,{\rm d}\tau \biggr\Vert_{X_\delta^{s+a, b}}\\
\hphantom{\biggl\Vert \int_0^t W_{t-\tau}^{\omega_i(k)}R[v, u](\tau, x)\,{\rm d}\tau\biggr\Vert_{H^{s+a}}}{}\lesssim\Vert R[v, u] \Vert_{X_\delta^{s+a, b-1}}\lesssim\Vert v\Vert_{X_\delta^{s, b}}^2\Vert u\Vert_{X_\delta^{s, b}},\qquad i=1, 2.
\end{gather*}
On the other hand, for any $0\leq a\leq 2s$,
\[\Vert \rho[u, u] \Vert_{H^{s+a}}=\sqrt{\sum_k <k>^{2(s+a)}\vert \hat{u}(k) \vert^6}\lesssim \Vert u \Vert_{H^s}^3.\]
Similarly,
\[\Vert \rho[v, u] \Vert_{H^{s+a}}=\sqrt{\sum_k <k>^{2(s+a)}\vert \hat{v}(k) \vert^4\vert \hat{u}(k) \vert^2}\lesssim \Vert v \Vert_{H^s}^2\Vert u \Vert_{H^s}.\]
We thus conclude, for $i=1, 2$,
\begin{gather*}
\biggl\Vert \int_0^t W_{t-\tau}^{\omega_i(k)}N_1(\tau, x)\,{\rm d}\tau\biggr\Vert_{H^{s+a}}\\
\qquad{} \lesssim \int_0^t\,\Vert u\Vert_{H^s}^3\,{\rm d}\tau+\int_0^t\,\Vert v\Vert_{H^s}^2\Vert u\Vert_{H^s}\,{\rm d}\tau+\Vert u\Vert_{X_\delta^{s, b}}^3+\Vert v\Vert_{X_\delta^{s, b}}^2\Vert u\Vert_{X_\delta^{s, b}}\\
\qquad{} \lesssim \Vert u\Vert_{X_\delta^{s, b}}^3+\Vert v\Vert_{X_\delta^{s, b}}^2\Vert u\Vert_{X_\delta^{s, b}},
\end{gather*}
and
\begin{gather*}
\biggl\Vert \int_0^t W_{t-\tau}^{\omega_i(k)}N_2(\tau, x)\,{\rm d}\tau\biggr\Vert_{H^{s+a}}\\
\qquad {} \lesssim \int_0^t\,\Vert v\Vert_{H^s}^3\,{\rm d}\tau+\int_0^t\,\Vert u\Vert_{H^s}^2\Vert v\Vert_{H^s}\,{\rm d}\tau+\Vert v\Vert_{X_\delta^{s, b}}^3+\Vert u\Vert_{X_\delta^{s, b}}^2\Vert v\Vert_{X_\delta^{s, b}}\\
\qquad {} \lesssim \Vert v\Vert_{X_\delta^{s, b}}^3+\Vert u\Vert_{X_\delta^{s, b}}^2\Vert v\Vert_{X_\delta^{s, b}},
\end{gather*}
Collecting above, we arrive at, for $0<t<\delta <1$ (where $[0, \delta]$ is the local existence interval),
\begin{gather*}
\Vert u-\mathcal{L}^u \Vert_{H^{s+a}}
\lesssim\sum_{i, j=1}^2 \biggl\Vert \int_0^t W_{t-\tau}^{\omega_i(k)}N_j(\tau, x)\,{\rm d}\tau\biggr\Vert_{H^{s+a}}\\
\hphantom{\Vert u-\mathcal{L}^u \Vert_{H^{s+a}}}{}\lesssim \Vert u \Vert_{X_{\delta}^{s, b}}^3+\Vert u \Vert_{X_{\delta}^{s, b}}\Vert v \Vert_{X_{\delta}^{s, b}}^2+\Vert v \Vert_{X_{\delta}^{s, b}}^3+\Vert v \Vert_{X_{\delta}^{s, b}}\Vert u \Vert_{X_{\delta}^{s, b}}^2.
\end{gather*}
In analogy with the above estimate, we find
\begin{gather*}
\Vert v-\mathcal{L}^v \Vert_{H^{s+a}}
\lesssim\sum_{i, j=1}^2 \biggl\Vert \int_0^t W_{t-\tau}^{\omega_i(k)}N_j(\tau, x)\,{\rm d}\tau\biggr\Vert_{H^{s+a}}\\
\hphantom{\Vert v-\mathcal{L}^v \Vert_{H^{s+a}}}{}\lesssim \Vert u \Vert_{X_{\delta}^{s, b}}^3+\Vert u \Vert_{X_{\delta}^{s, b}}\Vert v \Vert_{X_{\delta}^{s, b}}^2+\Vert v \Vert_{X_{\delta}^{s, b}}^3+\Vert v \Vert_{X_{\delta}^{s, b}}\Vert u \Vert_{X_{\delta}^{s, b}}^2,
\end{gather*}
and further conclude
\begin{equation}\label{uv-f-xsb}
\Vert u-\mathcal{L}^u \Vert_{H^{s+a}}+\Vert v-\mathcal{L}^v \Vert_{H^{s+a}}\lesssim \bigl( \Vert u \Vert_{X_{\delta}^{s, b}}+\Vert v \Vert_{X_{\delta}^{s, b}}\bigr)^3.
\end{equation}

Next, according to the locally well-posedness in $X_{\delta}^{s, b}(\mathbb{T})$ and global well-posedness in $H^s(\mathbb{T})$ for system~\eqref{mana} given in Propositions \ref{loc} and \ref{glo} below, and using the fact that the local existence time $\delta$ depends on the $L^2$-norm of the initial data, i.e., $\delta=\delta( \Vert f \Vert_{L^2}, \Vert g \Vert_{L^2} )<1$, we deduce by standard iteration argument that
\[
\Vert u \Vert_{H^{s}}+\Vert v \Vert_{H^{s}}\leq C{\rm e}^{C|t|} (\Vert f \Vert_{H^s}+\Vert g \Vert_{H^s}) := T(t)
\]
holds for any $s>0$. Therefore, fix $t$ large, one has $
\Vert u(r) \Vert_{H^{s}}+\Vert v(r) \Vert_{H^{s}}\lesssim T(r)\leq T(t)$ holds for $r\leq t$.

Set $J=t/\delta$. Applying~\eqref{uv-f-xsb} repeatedly produces, for any $1\leq j\leq J$,
\begin{gather*}
\Vert u(j\delta)-\mathcal{L}^u [u((j-1)\delta),v((j-1)\delta)](\delta)\Vert_{H^{s+a}}\\
\qquad\quad{}+\Vert v(j\delta)-\mathcal{L}^v [v((j-1)\delta),u((j-1)\delta)](\delta)\Vert_{H^{s+a}}\\
\qquad{}\lesssim \big( \Vert u((j-1)\delta) \Vert_{X_{\delta}^{s, b}}+\Vert v((j-1)\delta) \Vert_{X_{\delta}^{s, b}}\big)^3\\
\qquad{}\lesssim (\Vert u((j-1)\delta) \Vert_{H^s}+\Vert v((j-1)\delta) \Vert_{H^s})^3= T^3(t).
\end{gather*}
Therefore,
\begin{gather*}
\Vert u(t)-\mathcal{L}^u [f, g](t) \Vert_{H^{s+a}}+\Vert v(t)-\mathcal{L}^v [f, g](t) \Vert_{H^{s+a}}\\
\quad{}\leq \sum_{j=1}^{J}(\Vert L^u[u(j\delta), v(j\delta)]((J-j)\delta)-L^u[u((j-1)\delta), v((j-1)\delta)]((J-j+1)\delta) \Vert_{H^{s+a}}\\
\quad{}\hphantom{\leq \sum_{j=1}^{J}}{} +\Vert L^v[v(j\delta), u(j\delta)]((J-j)\delta)-L^v[v((j-1)\delta), u((j-1)\delta)]((J-j+1)\delta) \Vert_{H^{s+a}})\\
\quad{}\lesssim \sum_{j=1}^{J}(\Vert u(j\delta)-L^u[u((j-1)\delta), v((j-1)\delta)](\delta) \Vert_{H^{s+a}}\\
\quad{}\hphantom{\lesssim \sum_{j=1}^{J}}{} +\Vert v(j\delta)-L^v[v((j-1)\delta), u((j-1)\delta)](\delta) \Vert_{H^{s+a}})\\
\quad{}\lesssim JT^3(t)=tT^3(t)/\delta,
\end{gather*}
where the relations
$\varphi_1+\varphi_3+\omega_1+\omega_2=0$ and $(\varphi_1+\omega_1)(\varphi_1+\omega_2)(\varphi_1+\omega_1)+\varphi_2\varphi_4=0$ have been used in the second inequality.
It implies that, if the initial data $f, g\in H^s$, then $\mathcal{N}^u,\mathcal{N}^v\in C_{t\in \mathbb{R}}^0H_{x\in \mathbb{T}}^{s+a}$ holds for any $s>0$ and $a<\min\big(2s, \frac{1}{2}\big)$. In particular, if $f$ and $g$ are of bounded variation, i.e., $f, g\in\bigcap_{\epsilon>0}H^{\frac{1}{2}-\epsilon}$, and hence
\begin{equation*}
\mathcal{N}^u,\mathcal{N}^v \in \bigcap_{\epsilon>0}C_{t\in\mathbb{R}}^{0}H_{x\in\mathbb{T}}^{1-\epsilon }\subset \bigcap_{\epsilon>0}C_{t\in\mathbb{R}}^{0}C_{x\in\mathbb{T}}^{\frac{1}{2}-\epsilon},
\end{equation*}
where we use the relation $H^{\alpha+1/2}\subset C^\alpha$, for $0<\alpha<1/2$, again.

Finally, we turn our attention to the estimate of the linear part $\mathcal{L}^u[f, g](t, x)$ and $\mathcal{L}^v[f, g](t, x)$. Referring back to the definition of $\omega_i(k)$, $i=1, 2$, and $\varphi_j(k)$, $j=1, 2, 3, 4$, using the fact that $\varphi_m+\omega_n$ ($m=1, 3$, $n=1, 2$), $\omega_2-\omega_1$, $\varphi_2$ and $\varphi_4$ are indeed constants depending on $\Vert f \Vert_{L^2}$, $\Vert g \Vert_{L^2}$, $\langle f, g \rangle_{L^2}$ and $\langle g, f \rangle_{L^2}$ again, and denoting
\begin{alignat*}{5}
&C_1=\frac{\varphi_1+\omega_1}{\omega_1-\omega_2},\qquad && C_2=\frac{\varphi_1+\omega_2}{\omega_2-\omega_1},\qquad && C_3=\frac{\varphi_2}{\omega_2-\omega_1},\qquad && C_4=\frac{\varphi_2}{\omega_1-\omega_2}, &\\
&D_1=\frac{\varphi_3+\omega_1}{\omega_1-\omega_2},\qquad && D_2=\frac{\varphi_3+\omega_2}{\omega_2-\omega_1},\qquad && D_3=\frac{\varphi_4}{\omega_2-\omega_1},\qquad && D_4=\frac{\varphi_4}{\omega_1-\omega_2}, &
\end{alignat*}
one can rewrite the linear terms $\mathcal{L}^u[f, g](t, x)$ and $\mathcal{L}^v[f, g](t, x)$ in~\eqref{lu} and~\eqref{lv} as
\begin{gather*}
\mathcal{L}^u[f, g](t, x)=C_1\sum_k {\rm e}^{\i(P_1(k) t+k x)}\hat{f}(k)+C_2\sum_k {\rm e}^{\i(P_2(k) t+k x)}\hat{f}(k)\\
\hphantom{\mathcal{L}^u[f, g](t, x)=}{}+C_3\sum_k {\rm e}^{\i(P_2(k) t+k x)}\hat{g}(k)+C_4\sum_k {\rm e}^{\i(P_1(k) t+k x)}\hat{g}(k),\\
\mathcal{L}^v[f, g](t, x)=D_1\sum_k {\rm e}^{\i(P_2(k) t+k x)}\hat{f}(k)+D_2\sum_k {\rm e}^{\i(P_1(k) t+k x)}\hat{f}(k)\\
\hphantom{\mathcal{L}^u[f, g](t, x)=}{+}D_3\sum_k {\rm e}^{\i(P_1(k) t+k x)}\hat{g}(k)+D_4\sum_k {\rm e}^{\i(P_2(k) t+k x)}\hat{g}(k),
\end{gather*}
where
\begin{gather*}
P_1(k)=-\omega_1(k)=k^2+\frac{3}{4\pi}\big(\Vert f \Vert^2+\Vert g \Vert^2\big)-\frac{1}{2}\sqrt{\Delta},\\
P_2(k)=-\omega_2(k)=k^2+\frac{3}{4\pi}\big(\Vert f \Vert^2+\Vert g \Vert^2\big)+\frac{1}{2}\sqrt{\Delta},
\end{gather*}
and thus, both $P_1(k)$ and $P_2(k)$ are second-order integral polynomials with zero-order term. It is easy to verify that Lemma \ref{fra-lin-lem1} still holds for the integral polynomial $P(k)$ added with a~constant. Estimating them in a similar manner as used to prove Theorem \ref{thm-fra-lin} shows that at each irrational time,
\[
\mathcal{L}^u,\mathcal{L}^v \in \bigcap_{\epsilon>0}C^{\frac{1}{2}-\epsilon}(\mathbb{T}).
\]
This, when combined with the results for the nonlinear part completes the proof of part (i) of the theorem.

 (ii) The proof of part (ii) is a direct consequence of the fractal dimension result of linear system given in Theorem \ref{thm-fra-lin}, together with the results in part (i), and the fact that $f\colon \mathbb{T}\rightarrow\mathbb{R}\in C^\alpha$ then its fractal dimension $D\leq 2-\alpha$. The theorem is thereby proved.
\end{proof}

\begin{Remark}
If the initial data $f(x)=g(x)$, we are able to show that the coupling of different component will have less effect on the dynamic behavior of the system. We denote $P=\Vert f \Vert_{L^2}^2/\pi$, with the initial data $f\in H^s$, $s>0$. Under the change of variables
\[u(t, x)\rightarrow (u(t, x)+v(t, x)){\rm e}^{4\i Pt},\qquad v(t, x)\rightarrow (u(t, x)-v(t, x)){\rm e}^{2\i Pt},\]
 system~\eqref{mana} is converted into
\begin{gather}
\i(u+v)_t+(u+v)_{xx}+2\big(\vert u \vert^2+\vert v \vert^2\big)(u+v)-4P(u+v)=0,\nonumber\\
\i(u-v)_t+(u-v)_{xx}+2\big(\vert u \vert^2+\vert v \vert^2\big)(u-v)-2P(u-v)=0.\label{mana-tran}
\end{gather}
Moreover, by Plancherel's theorem, we deduce that
\[
 \widehat{\vert u \vert^2v}(k)
 =\sum_{k_1, k_2}\hat{u}(k_1)\overline{\hat{u}(k_2)}\hat{v}(k-k_1+k_2)=\frac{1}{2}P(\hat{u}(k)+\hat{v}(k))+\widehat{\rho[u, v]}(k)+\widehat{R[u, v]}(k),
\]
where $\widehat{\rho[u, v]}(k):=-\vert \hat{u}(k)\vert^2\hat{v}(k)$ and $\widehat{R[u, v]}(k)$ is defined in~\eqref{ruv}.
Therefore, system~\eqref{mana-tran} becomes
\begin{gather*}
\i u_t+u_{xx}+2N_1(u, v)=0,\\
\i v_t+v_{xx}+2N_2(u, v))=0,
\end{gather*}
where $N_1(u, v)=\rho[u, u]+R[u, u]+\rho[v, u]+R[v, u]$, $N_2(u, v)=\rho[u, v]+R[u, v]+\rho[v, v]+R[v, v]$.
This allows us to safely arrive at the following solution decomposition by directly applying the Duhamel formula,
\begin{gather*}
u={\rm e}^{\i t \partial_{xx}}f(x)+2\i\int_0^t {\rm e}^{\i(t-\tau)\partial_{xx}}N_1(u, v)\,{\rm d}\tau,\\
v={\rm e}^{\i t \partial_{xx}}f(x)+2\i\int_0^t {\rm e}^{\i(t-\tau)\partial_{xx}}N_2(u, v)\,{\rm d}\tau.
\end{gather*}
In such a special setting, one can deduce the conclusion of Theorem \ref{thm-mana} by directly applying the similar arguments used for scalar NLS equation in~\cite{ET13-nls}.
\end{Remark}

\subsection{The local and global well-posedness of the Manakov system on the torus}

The following two propositions deal with the locally well-posedness in $X_\delta^{s, b}$ and global well-posedness in $H^{s}(\mathbb{T})$ for periodic initial-boundary value problem~\eqref{ibv-mana}, respectively, which have been used in the proof of the main theorem.

\begin{Definition}[\cite{ET16}]
We say a dispersive equation is locally well-posedness in $H^s(\mathbb{T})$ if there is a Banach space
$X_\delta \subset C_t^0H_x^s([-\delta, \delta]\times \mathbb{T})$ so that for any initial data $f\in H^s(\mathbb{T})$ there exist $\delta=\delta( \Vert f \Vert_{H^s} )>0$ and a unique solution in $X_\delta$. We also require that the data-to-solution map is continuous from $H^s$ to $X_\delta$. We say that an equation is global well-posedness if the local solutions can be extended to a solution in $C_t^0H_x^s([-T, T]\times \mathbb{T})$ for any $T>0$.
\end{Definition}

\begin{Proposition}\label{loc}
For any $s\geq 0$, with $f, g\in H^s(T)$, the periodic initial-boundary value problem~\eqref{ibv-mana} is locally well-posedness in $X_\delta^{s, b}$ for any $\frac{1}{2}<b<\frac{5}{8}$.
\end{Proposition}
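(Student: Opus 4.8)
The plan is to establish local well-posedness for the Manakov system \eqref{ibv-mana} by the standard Bourgain space contraction mapping argument, treating the system as a fixed-point problem for the Duhamel-iterated nonlinear map. First I would write the system in integral form: using the Duhamel formula adapted to the free Schr\"{o}dinger propagator $W_t^{k^2}={\rm e}^{\i t\partial_{xx}}$, the solution $(u,v)$ solves
\[
u=W_t^{k^2}f-\i\int_0^t W_{t-\tau}^{k^2}\bigl((|u|^2+|v|^2)u\bigr)(\tau)\,{\rm d}\tau,
\]
with the analogous equation for $v$, where I absorb the dispersion $k^2$ into the phase $\omega(k)=k^2$ appearing in the $X^{s,b}$ norm. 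The goal is then to show that the map sending $(u,v)$ to the right-hand sides is a contraction on a ball in $X_\delta^{s,b}\times X_\delta^{s,b}$ for suitable $b\in(1/2,5/8)$ and sufficiently small $\delta$.

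The core of the argument rests on the two preliminary lemmas already available. Lemma~\ref{n-xsb-1} controls the Duhamel integral operator, giving the bound
\[
\Bigl\Vert\int_0^t W_{t-\tau}^{k^2}F(\tau,x)\,{\rm d}\tau\Bigr\Vert_{X_\delta^{s,b}}\lesssim\Vert F\Vert_{X_\delta^{s,b-1}}
\]
for $b=b'+1$ with $-\tfrac12<b'\le0$, which reduces everything to estimating the nonlinearity in $X_\delta^{s,b-1}$. The trilinear term $|u|^2u$ decomposes as in the proof of Theorem~\ref{thm-mana} into the resonant piece (proportional to $\Vert f\Vert_{L^2}^2\,\hat u$), the diagonal correction $\rho[u,u]$, and the genuinely nonlinear remainder $R[u,u]$. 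The resonant and $\rho$ pieces are harmless: the former is linear and can be absorbed into the phase or bounded trivially, and $\rho[u,u]$ is estimated pointwise in frequency. The key nonlinear estimate is exactly Lemma~\ref{R}, which supplies
\[
\Vert R[u,v]\Vert_{X^{s+a,b-1}}\lesssim\Vert u\Vert_{X^{s,b}}^2\Vert v\Vert_{X^{s,b}}
\]
for $0<b-\tfrac12$ small and $a<\min(2s,\tfrac12)$; taking $a=0$ and the cross terms $R[u,v]$, $R[v,u]$ into account handles all the mixed products arising from the coupling $(|u|^2+|v|^2)u$ and $(|u|^2+|v|^2)v$. Combining these with Lemma~\ref{n-xsb-1} yields a trilinear bound of the form $\bigl\Vert\int_0^t W_{t-\tau}^{k^2}\mathcal N\,{\rm d}\tau\bigr\Vert_{X_\delta^{s,b}}\lesssim(\Vert u\Vert_{X_\delta^{s,b}}+\Vert v\Vert_{X_\delta^{s,b}})^3$, which is the multiplicative structure needed for a contraction.

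With these estimates in hand, I would close the argument by the usual scheme: choose a ball of radius $\sim\Vert f\Vert_{H^s}+\Vert g\Vert_{H^s}$, insert a factor of $\delta^{\theta}$ with $\theta>0$ coming from the time-localization in the restricted norm $X_\delta^{s,b}$ (this gain in $\delta$ is what forces the smallness and gives $\delta=\delta(\Vert f\Vert_{H^s},\Vert g\Vert_{H^s})$), and verify both the self-mapping and Lipschitz (difference) estimates so that Banach's fixed-point theorem applies. The embedding $X_\delta^{s,b}\hookrightarrow C_t^0H_x^s$ for $b>1/2$ then places the solution in the required space and gives continuity of the data-to-solution map. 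The main obstacle I anticipate is purely bookkeeping rather than conceptual: one must carefully track that the coupling produces only the four types of terms $R[u,u]$, $R[u,v]$, $R[v,u]$, $R[v,v]$ together with their $\rho$-counterparts, and confirm that the restriction $b<5/8$ (rather than merely $b>1/2$) is exactly what Lemma~\ref{R}'s hypothesis ``$0<b-\tfrac12$ sufficiently small'' demands, so that the trilinear estimate and the $\delta$-gain coexist. Since Lemma~\ref{R} explicitly states that the same inequality holds for the restricted norms, no additional harmonic-analysis input beyond the two cited lemmas is required, and the proof is essentially a verification that the Manakov nonlinearity falls within their scope.
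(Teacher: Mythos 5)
Your overall architecture (Duhamel formula, Lemma~\ref{n-xsb-1}, contraction in $X_\delta^{s,b}\times X_\delta^{s,b}$) matches the paper's, but the engine you chose for the trilinear estimate --- the resonant decomposition plus Lemma~\ref{R} --- is not the one the paper uses, and as written it has a genuine gap: it cannot produce the factor $\delta^{\theta}$, $\theta>0$, that your own closing argument requires. Lemma~\ref{R} is pinned at the endpoint pair of exponents: the left-hand side sits in $X^{s+a,b-1}$ and the right-hand side in $X^{s,b}$, with the same $b>\tfrac12$ as your contraction norm. Any attempt to manufacture smallness by time-localization, $\Vert F\Vert_{X_\delta^{s,b-1}}\lesssim \delta^{\epsilon}\Vert F\Vert_{X_\delta^{s,b-1+\epsilon}}$, forces you to apply Lemma~\ref{R} with parameter $b+\epsilon$, whose right-hand side $\Vert u\Vert_{X^{s,b+\epsilon}}$ is \emph{not} controlled by the ball norm $\Vert u\Vert_{X_\delta^{s,b}}$. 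Without the $\delta^{\theta}$ factor the self-mapping bound reads $\Vert\Phi(u,v)\Vert\lesssim \Vert f\Vert_{H^s}+\Vert g\Vert_{H^s}+(\Vert u\Vert+\Vert v\Vert)^3$, which closes only for small data. The paper avoids this entirely: it never decomposes the nonlinearity, but estimates $\vert u\vert^2 v$ directly with Lemma~\ref{uv-xsb}, $\Vert \vert u\vert^2v\Vert_{X_\delta^{s,-3/8}}\lesssim\Vert u\Vert_{X_\delta^{0,3/8}}^2\Vert v\Vert_{X_\delta^{s,3/8}}$, whose exponents $\pm\tfrac38$ lie strictly inside $\bigl(-\tfrac12,\tfrac12\bigr)$; the gaps $b-1<-\tfrac38$ and $\tfrac38<b$ are exactly what yields $\delta^{1-b-}$, and the hypothesis $b<\tfrac58$ is precisely the condition $b-1<-\tfrac38$. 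So your claim that $b<\tfrac58$ is ``what Lemma~\ref{R}'s hypothesis demands'' misidentifies the source of that restriction.

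There are two further problems with the decomposition route. First, the identity $\widehat{\vert u\vert^2u}=\tfrac{1}{\pi}\Vert f\Vert_{L^2}^2\hat u+\widehat{\rho[u,u]}+\widehat{R[u,u]}$ (and its coupled analogue, whose resonant part is $\tfrac{1}{2\pi}\bigl(\langle f,g\rangle_{L^2}^2\hat u+\Vert f\Vert_{L^2}^2\hat v\bigr)$) uses conservation of $\Vert u(t)\Vert_{L^2}$ and $\langle u,v\rangle_{L^2}$, which holds for \emph{solutions} but not for arbitrary elements of the ball in a fixed-point iteration. For iterates the coefficients are time-dependent, so they can neither be ``absorbed into the phase'' (that absorption, and the resulting linear coupling of $\hat u$ and $\hat v$, is exactly what forces the diagonalization machinery in the proof of Theorem~\ref{thm-mana}) nor ``bounded trivially,'' since multiplication by a merely bounded function of $t$ is not bounded on $X^{s,b-1}$. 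Second, Lemma~\ref{R} requires $s>0$: your choice $a=0$ needs $0<\min\bigl(2s,\tfrac12\bigr)$, so the endpoint $s=0$ of Proposition~\ref{loc} is not covered, whereas Lemma~\ref{uv-xsb} holds for all $s\geq0$. In short, the Manakov nonlinearity does \emph{not} need the resonant decomposition for well-posedness --- that decomposition is the tool for the smoothing estimate behind Theorem~\ref{thm-mana} --- and importing it here both breaks the contraction scheme and shrinks the admissible range of $s$.
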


The proof of Proposition \ref{loc} relies on the following two lemmas. The first one is concerned with the behavior of the linear group with respect to $X_\delta^{s, b}$ norm, and the next one is a sharp estimate for such norm, which is a corollary of \cite[Proposition 3.26]{ET16}.

\begin{Lemma}\label{l-xsb}
For $0<\delta \leq 1$, $s, b\in \mathbb{R}$, we have
\[ \Vert {\rm e}^{\i t \partial_{xx}} f\Vert_{X_\delta^{s, b}} \lesssim \Vert f\Vert_{H^s}.\]
\end{Lemma}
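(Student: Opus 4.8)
The plan is to reduce this restricted-norm estimate to an explicit computation for a time-localized version of the free evolution. The essential point to appreciate first is that $ {\rm e}^{\i t\partial_{xx}}f $ itself does \emph{not} lie in $X^{s,b}$: its spacetime Fourier transform is a measure supported on the paraboloid $\tau=k^2$, so it has no $L^2_\tau$ density and infinite $X^{s,b}$ norm. The restricted norm $\Vert\cdot\Vert_{X_\delta^{s,b}}$ is precisely the device that circumvents this, by allowing us to replace $ {\rm e}^{\i t\partial_{xx}}f $ with any extension agreeing with it on $[-\delta,\delta]$. Accordingly, I would fix once and for all a cutoff $\eta\in C_c^\infty(\mathbb{R})$ with $\eta\equiv 1$ on $[-1,1]$. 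Since $0<\delta\le 1$, the function $\eta(t)\,{\rm e}^{\i t\partial_{xx}}f$ coincides with $ {\rm e}^{\i t\partial_{xx}}f $ on $[-\delta,\delta]$, so by the definition of the restricted norm it suffices to prove $\Vert \eta(t)\,{\rm e}^{\i t\partial_{xx}}f\Vert_{X^{s,b}}\lesssim \Vert f\Vert_{H^s}$, with an implied constant independent of $f$ and of $\delta$.

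Next I would compute the spacetime Fourier transform explicitly. Writing $ {\rm e}^{\i t\partial_{xx}}f=\sum_k \hat f(k)\,{\rm e}^{\i(kx-k^2 t)} $, the spatial transform isolates the frequency $k$, while the temporal transform acts only on the factor $\eta(t)\,{\rm e}^{-\i k^2 t}$ and produces $\hat\eta$ evaluated at $\tau-k^2$. Hence the full transform factorizes as $\hat f(k)$ times a single Schwartz function of the variable $\tau-k^2$, and in particular $\big|\widehat{\eta\,{\rm e}^{\i t\partial_{xx}}f}(\tau,k)\big|=|\hat f(k)|\,\big|\hat\eta(\tau-k^2)\big|$.

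Substituting this into the definition of the $X^{s,b}$ norm and changing variables $\sigma=\tau-k^2$ in the inner integral decouples the $k$-sum from the $\tau$-integral:
\[
\Vert \eta\,{\rm e}^{\i t\partial_{xx}}f\Vert_{X^{s,b}}^2
=\Bigl(\sum_k \langle k\rangle^{2s}|\hat f(k)|^2\Bigr)\int_{\mathbb{R}}\langle\sigma\rangle^{2b}\,\big|\hat\eta(\sigma)\big|^2\,{\rm d}\sigma
=\Vert f\Vert_{H^s}^2\int_{\mathbb{R}}\langle\sigma\rangle^{2b}\,\big|\hat\eta(\sigma)\big|^2\,{\rm d}\sigma .
\]
The remaining $\sigma$-integral is a finite constant depending only on $\eta$ and $b$, because $\hat\eta$ is Schwartz and therefore decays faster than any power of $\langle\sigma\rangle$; this is exactly the point at which the estimate holds for \emph{every} $b\in\mathbb{R}$ with no restriction on the regularity index.

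There is no genuine obstacle here, as this is the standard ``free-solution bound'' for Bourgain spaces; the only things to watch are the conceptual necessity of the cutoff (without it the norm is infinite) and the routine Fourier bookkeeping, including matching the sign convention so that the weight argument is $\tau-k^2$. I would then conclude by chaining the two bounds,
\[
\Vert {\rm e}^{\i t\partial_{xx}}f\Vert_{X_\delta^{s,b}}\le \Vert \eta\,{\rm e}^{\i t\partial_{xx}}f\Vert_{X^{s,b}}\lesssim \Vert f\Vert_{H^s},
\]
the implied constant depending only on $\eta$ and $b$ and hence uniform over $0<\delta\le 1$, which is what the lemma asserts.
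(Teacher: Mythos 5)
Your proof is correct, and it is the canonical argument: localize with a fixed smooth cutoff $\eta\equiv 1$ on $[-1,1]$ (noting, rightly, that without the cutoff the global free solution has no finite $X^{s,b}$ norm, since its spacetime transform is a measure on the parabola), compute the transform as $\hat f(k)\,\hat\eta(\tau-\varphi(k))$, and factor the norm to get a constant $\bigl(\int\langle\sigma\rangle^{2b}|\hat\eta(\sigma)|^2\,{\rm d}\sigma\bigr)^{1/2}$ depending only on $\eta$ and $b$, hence uniform in $\delta\in(0,1]$. For comparison: the paper offers no proof of this lemma at all, quoting it as a standard fact about Bourgain spaces (in the spirit of the results it cites from Erdo\u{g}an--Tzirakis), so your write-up simply supplies the standard argument the paper implicitly relies on; there is no divergence of method to report, and no gap in your reasoning.
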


\begin{Lemma}\label{uv-xsb}
 For any $s\geq 0$ we have
\[ \Vert \vert u\vert^2 v \Vert_{X_\delta^{s, -\frac{3}{8}}} \lesssim \Vert u\Vert_{X_\delta^{0, \frac{3}{8} }}^2\Vert v\Vert_{X_\delta^{s, \frac{3}{8}}}.\]
\end{Lemma}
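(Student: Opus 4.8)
The plan is to obtain this bound as a consequence of the sharp periodic $L^4$ Strichartz inequality recorded in \cite[Proposition~3.26]{ET16}, which on $\mathbb{T}$ takes the form $\Vert u\Vert_{L_{t,x}^4}\lesssim\Vert u\Vert_{X^{0,3/8}}$, combined with a duality argument. First I would pass from the restricted spaces $X_\delta^{s,b}$ to the global ones: choosing near-optimal extensions of $u$ and $v$ and using the standard extension properties of the $X^{s,b}$ scale (admissible since $3/8>0$ and $-3/8<0$), it is enough to prove $\Vert |u|^2 v\Vert_{X^{s,-3/8}}\lesssim\Vert u\Vert_{X^{0,3/8}}^2\Vert v\Vert_{X^{s,3/8}}$ on $\mathbb{T}\times\mathbb{R}$.

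Since $X^{s,-3/8}=\bigl(X^{-s,3/8}\bigr)^{*}$, the estimate is equivalent, by duality and Parseval, to the uniform control of the quadrilinear form $\bigl|\int_{\mathbb{R}}\int_{\mathbb{T}} u\,\bar u\,v\,\bar w\,\mathrm{d}x\,\mathrm{d}t\bigr|$ over all $w$ with $\Vert w\Vert_{X^{-s,3/8}}\le 1$. Expanding on the Fourier side, the spatial frequencies of $u$, $\bar u$, $v$ and $w$ satisfy $k=k_1-k_2+k_3$, so the output weight can be split by the elementary bound $\langle k\rangle^{s}\lesssim\langle k_1\rangle^{s}+\langle k_2\rangle^{s}+\langle k_3\rangle^{s}$; in the range where the $v$-frequency $k_3$ is the largest one has $\langle k\rangle^{s}\lesssim\langle k_3\rangle^{s}$ and the derivative is absorbed into $\Vert v\Vert_{X^{s,3/8}}$.

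With the weight in place I would apply Hölder in the space-time variables to dominate the form by $\Vert u\Vert_{L^4}\,\Vert u\Vert_{L^4}\,\bigl\Vert\langle\partial_x\rangle^{s}v\bigr\Vert_{L^4}\,\bigl\Vert\langle\partial_x\rangle^{-s}w\bigr\Vert_{L^4}$, and then apply the $L^4$ Strichartz bound to each of the four factors, turning them into $\Vert u\Vert_{X^{0,3/8}}$, $\Vert v\Vert_{X^{s,3/8}}$ and $\Vert w\Vert_{X^{-s,3/8}}$. Because $b=3/8$ is exactly the threshold at which the periodic $L^4$ estimate holds, no derivative is wasted and the claimed inequality closes; feeding the same chain through the extension characterization returns the restricted-norm statement.

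The main obstacle is the distribution of the weight $\langle k\rangle^{s}$. On the right-hand side only $v$ (and the dual function $w$) carry Sobolev regularity, while the two copies of $u$ are measured in $X^{0,3/8}$; the Hölder argument above therefore closes immediately only when the largest interacting frequency is that of $v$. In the complementary regime, where a frequency of $u$ dominates, the weight cannot be moved onto an $X^{0}$ factor by Hölder alone, and one must instead exploit the dispersive gain carried by the modulations $\langle\tau_j-k_j^2\rangle$ in the resonance analysis behind \cite[Proposition~3.26]{ET16} to redistribute the high frequency; this is the technical core of the estimate.
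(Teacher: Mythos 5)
Your reduction to global norms, the duality set-up, and the treatment of the regime where the $v$-frequency dominates are all fine; indeed this is exactly the mechanism behind the estimate the paper cites (\cite[Proposition~3.26]{ET16}, via the periodic $L^4$ bound $\Vert u\Vert_{L^4_{t,x}}\lesssim\Vert u\Vert_{X^{0,3/8}}$). The problem is the case you defer in your last paragraph---highest frequency on a $u$-factor---which you propose to close by ``exploiting the dispersive gain carried by the modulations.'' That is not a proof, and in fact no such argument can exist, because in that regime the asymmetric inequality of Lemma~\ref{uv-xsb} is false for every $s>0$. Concretely (using the convention $\langle\tau-k^2\rangle$ from the proof of Lemma~\ref{R}, so that ${\rm e}^{\i(kx+k^2t)}$ has zero modulation), let $\psi$ be a smooth bump equal to $1$ on $[-\delta,\delta]$ and put
\[
u(t,x)=\psi(t)\bigl({\rm e}^{\i(Nx+N^2t)}+1\bigr),\qquad v(t,x)=\psi(t).
\]
Then $\Vert u\Vert_{X^{0,3/8}}\lesssim\Vert\psi\Vert_{H^{3/8}}=O(1)$ and $\Vert v\Vert_{X^{s,3/8}}=O(1)$ uniformly in $N$, but the only contribution of $u\bar u v$ at spatial frequency $N$ is $\psi^3(t)\,{\rm e}^{\i(Nx+N^2t)}$, a time-localized free wave of frequency $N$, whose $X^{s,-3/8}$ norm is $N^s\Vert\psi^3\Vert_{H^{-3/8}}\sim N^s$. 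So the left-hand side grows like $N^s$ while the right-hand side stays bounded; restricting to $[-\delta,\delta]$ does not help, since every extension of $|u|^2v$ agrees on $[-\delta,\delta]$ with this fixed nonzero free wave at frequency $N$, hence still has restricted norm $\gtrsim_\delta N^s$. The reason your hoped-for modulation gain is unavailable is that this configuration ($k_1=k=N$, $k_2=k_3=0$) is fully resonant: $k^2-k_1^2+k_2^2-k_3^2=2(k_1-k_2)(k_3-k_2)=0$, so all four modulations are simultaneously $O(1)$.

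Note that this does not contradict the scalar estimate in \cite[Proposition~3.26]{ET16}: there all three slots carry the same function, so the factor $\Vert u\Vert_{X^{s,3/8}}$ on the right automatically majorizes the high-frequency piece (both sides are $\sim N^s$ in the example above). What your frequency-splitting argument genuinely proves---the hard case now closes by placing $\langle k\rangle^{s}$ on whichever factor carries the dominant frequency---is the symmetric version
\[
\Vert |u|^2v\Vert_{X^{s,-3/8}_\delta}\lesssim \Vert u\Vert^2_{X^{0,3/8}_\delta}\Vert v\Vert_{X^{s,3/8}_\delta}+\Vert u\Vert_{X^{s,3/8}_\delta}\Vert u\Vert_{X^{0,3/8}_\delta}\Vert v\Vert_{X^{0,3/8}_\delta},
\]
and this is the statement that should replace Lemma~\ref{uv-xsb}. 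It also suffices for everything the paper uses the lemma for: in the proof of Proposition~\ref{loc} the right-hand side is in any case enlarged to products of full $X^{s,b}_\delta$ norms, and in Proposition~\ref{glo} the extra term retains the required structure ($X^{0,b}$-norm coefficients multiplying a single $X^{s,b}$-norm), so both well-posedness results go through verbatim with the corrected lemma.
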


\begin{proof}[Proof of Proposition \ref{loc}]
Applying the Duhamel formula to system~\eqref{ibv-mana}, we arrive at
\begin{gather*}
u(t, x)={\rm e}^{\i t\partial_{xx}}f+\i \int_{0}^{t}{\rm e}^{\i(t-\tau)\partial_{xx}}\big(\vert u \vert^2+\vert v\vert^2\big)u(\tau, x)\,{\rm d}\tau,\\
v(t, x)={\rm e}^{\i t\partial_{xx}}g+\i \int_{0}^{t}{\rm e}^{\i(t-\tau)\partial_{xx}}\big(\vert u \vert^2+\vert v\vert^2\big)v(\tau, x)\,{\rm d}\tau.
\end{gather*}
Therefore,
\begin{align*}
\Vert u(t, x) \Vert_{X_{\delta}^{s, b}}
\lesssim{}& \Vert {\rm e}^{\i t\partial_{xx}}f \Vert_{X_{\delta}^{s, b}}+\biggl\Vert \int_{0}^{t}{\rm e}^{\i(t-\tau)\partial_{xx}}\vert u \vert^2u\,{\rm d}\tau\biggr\Vert_{X_{\delta}^{s, b}}+\biggl\Vert \int_{0}^{t}{\rm e}^{\i(t-\tau)\partial_{xx}}\vert v \vert^2u\,{\rm d}\tau\biggr\Vert_{X_{\delta}^{s, b}}\\
:={}& \mathrm{I}+\mathrm{II}+\mathrm{III}.
\end{align*}
Thanks to Lemma \ref{l-xsb}, we first estimate, for $0<\delta\leq 1$, $s, b \in \mathbb{R}$,
\[\mathrm{I} \lesssim \Vert f \Vert_{H^s}.\]
 Next, using Lemmas~\ref{n-xsb-1} and~\ref{uv-xsb} successively, we deduce that, for $1/2<b<5/8$,
\begin{gather*}
\mathrm{II} \lesssim \Vert \vert u \vert^2u \Vert_{X_{\delta}^{s, b-1}}\lesssim \delta^{\frac{5}{8}-b}\Vert \vert u \vert^2u \hphantom{\mathrm{II}}{}\Vert_{X_{\delta}^{s, -\frac{3}{8}}}\lesssim \delta^{\frac{5}{8}-b}\Vert u \Vert_{X_{\delta}^{0, \frac{3}{8}}}^2\Vert u \Vert_{X_{\delta}^{s, \frac{3}{8}}}\\
\hphantom{\mathrm{II}}{}\lesssim \delta^{1-b-}\Vert u \Vert_{X_{\delta}^{0, b}}^2\Vert u \Vert_{X_{\delta}^{s, b}}\lesssim \delta^{1-b-}\Vert u \Vert_{X_{\delta}^{s, b}}^3.
\end{gather*}
Similarly, for $1/2<b<5/8$,
\begin{gather*}
\mathrm{III} \lesssim \Vert \vert v \vert^2u \Vert_{X_{\delta}^{s, b-1}}\lesssim \delta^{\frac{5}{8}-b}\Vert \vert v \vert^2u \Vert_{X_{\delta}^{s, -\frac{3}{8}}}\lesssim \delta^{\frac{5}{8}-b}\Vert v \Vert_{X_{\delta}^{0, \frac{3}{8}}}^2\Vert u \Vert_{X_{\delta}^{s, \frac{3}{8}}}\\
\hphantom{\mathrm{III}}{}\lesssim \delta^{1-b-}\Vert v \Vert_{X_{\delta}^{0, b}}^2\Vert u \Vert_{X_{\delta}^{s, b}}\lesssim \delta^{1-b-}\Vert v \Vert_{X_{\delta}^{s, b}}^2\Vert u \Vert_{X_{\delta}^{s, b}}.
\end{gather*}
Collecting above immediately yields
\[
\Vert u(t, x) \Vert_{X_{\delta}^{s, b}} \lesssim \Vert f \Vert_{H^s}+ \delta^{1-b-}\big(\Vert u \Vert_{X_{\delta}^{s, b}}^3+\Vert v \Vert_{X_{\delta}^{s, b}}^2\Vert u \Vert_{X_{\delta}^{s, b}}\big).
\]
Similarly,
\[
\Vert v(t, x) \Vert_{X_{\delta}^{s, b}} \lesssim \Vert g \Vert_{H^s}+ \delta^{1-b-}\big(\Vert v \Vert_{X_{\delta}^{s, b}}^3+\Vert u \Vert_{X_{\delta}^{s, b}}^2\Vert v \Vert_{X_{\delta}^{s, b}}\big),
\]
which immediately leads to
\[
\Vert u(t, x) \Vert_{X_{\delta}^{s, b}}+\Vert v(t, x) \Vert_{X_{\delta}^{s, b}} \lesssim \Vert f \Vert_{H^s}+\Vert g \Vert_{H^s}+\delta^{1-b-}\big(\Vert u \Vert_{X_{\delta}^{s, b}}+\Vert v \Vert_{X_{\delta}^{s, b}}\big)^3.
\]

Accordingly, we define two closed balls
\[
B_{\delta}^1=\bigl\{ u\in X_{\delta}^{s,b}\mid \| u\|_{X_{\delta}^{s,b}}\lesssim \| f\|_{H^s} \bigr\} \qquad \mathrm{and}\qquad
B_{\delta}^2=\bigl\{ v\in X_{\delta}^{s,b}\mid \| v\|_{X_{\delta}^{s,b}}\lesssim \| g\|_{H^s} \bigr\}.
\]
Denote $B_{\delta}=B_{\delta}^1\cap B_{\delta}^2$, and thus $B_{\delta}$ is a nonempty closed subset. When
\[
\delta\sim (\| f\|_{H^s}+\| g\|_{H^s})^{-\frac{2}{1-b}+},
\]
the data-to-solution map is a contraction. By the Banach fixed point theorem, there exists a~unique solution. Since $b>1/2$, we know that the solutions are in fact continuous functions with values in $H^s(\mathbb{T})$.
\end{proof}

Furthermore, we have the result on global well-posedness.

\begin{Proposition}\label{glo}
The periodic initial-boundary value problem~\eqref{ibv-mana} is global well-posedness in~$H^{s}(\mathbb{T})$ for any $s\geq 0$.
\end{Proposition}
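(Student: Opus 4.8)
The plan is to promote the local theory of Proposition~\ref{loc} to a global one by combining conservation of mass with the $L^2$-subcritical structure already visible in the trilinear estimate. First I would record that each component conserves its $L^2$-norm: the computation performed in the proof of Theorem~\ref{thm-mana}(i) gives $\frac{{\rm d}}{{\rm d}t}\Vert u(t)\Vert_{L^2}^2=\frac{{\rm d}}{{\rm d}t}\Vert v(t)\Vert_{L^2}^2=0$, because the cubic factor $|u|^2+|v|^2$ is real and hence does not contribute to the evolution of the mass; thus $\Vert u(t)\Vert_{L^2}=\Vert f\Vert_{L^2}$ and $\Vert v(t)\Vert_{L^2}=\Vert g\Vert_{L^2}$ for all $t$ in the existence interval.

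The decisive observation is that the local existence time can be taken to depend only on the $L^2$-norms of the data. This is where the precise form of Lemma~\ref{uv-xsb} matters: in
\[
\bigl\Vert |u|^2 v \bigr\Vert_{X_\delta^{s,-\frac{3}{8}}}\lesssim \Vert u\Vert_{X_\delta^{0,\frac{3}{8}}}^2\Vert v\Vert_{X_\delta^{s,\frac{3}{8}}},
\]
two of the three factors are measured in the $L^2$-based norm $X^{0,3/8}$ and only one in the higher norm $X^{s,3/8}$. Retaining this split (rather than bounding $\Vert u\Vert_{X_\delta^{0,b}}$ crudely by $\Vert u\Vert_{X_\delta^{s,b}}$ as in the statement of Proposition~\ref{loc}) and feeding it through Lemma~\ref{n-xsb-1}, the Duhamel iterate obeys
\[
\Vert u\Vert_{X_\delta^{s,b}}+\Vert v\Vert_{X_\delta^{s,b}}\lesssim \Vert f\Vert_{H^s}+\Vert g\Vert_{H^s}+\delta^{1-b-}\bigl(\Vert u\Vert_{X_\delta^{0,b}}^2+\Vert v\Vert_{X_\delta^{0,b}}^2\bigr)\bigl(\Vert u\Vert_{X_\delta^{s,b}}+\Vert v\Vert_{X_\delta^{s,b}}\bigr),
\]
for $\frac{1}{2}<b<\frac{5}{8}$ and $\delta<1$. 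At the level $s=0$ the $X_\delta^{0,b}$-norms are controlled by $\Vert f\Vert_{L^2}$ and $\Vert g\Vert_{L^2}$, so choosing $\delta\sim\bigl(\Vert f\Vert_{L^2}+\Vert g\Vert_{L^2}\bigr)^{-\frac{2}{1-b}+}$ makes the prefactor small; by mass conservation this threshold is the same on every subinterval.

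With this uniform time step the argument splits into two standard stages. Taking $s=0$, the contraction closes with a $\delta$ that never shrinks, so iterating the local solution extends it to all times and yields global well-posedness in $L^2(\mathbb{T})$. For $s>0$, freezing the (now bounded) $L^2$-factors turns the displayed estimate into one that is \emph{linear} in $\Vert u\Vert_{X_\delta^{s,b}}+\Vert v\Vert_{X_\delta^{s,b}}$, so each step amplifies the $H^s$-norm by at most a fixed constant $C=C\bigl(\Vert f\Vert_{L^2},\Vert g\Vert_{L^2}\bigr)$ (using $X^{s,b}\hookrightarrow C_t^0H_x^s$ for $b>\frac{1}{2}$). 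Iterating over the $J=t/\delta$ subintervals then produces the exponential bound
\[
\Vert u(t)\Vert_{H^s}+\Vert v(t)\Vert_{H^s}\lesssim C^{t/\delta}\bigl(\Vert f\Vert_{H^s}+\Vert g\Vert_{H^s}\bigr)\lesssim {\rm e}^{c|t|}\bigl(\Vert f\Vert_{H^s}+\Vert g\Vert_{H^s}\bigr),
\]
which is exactly the quantity $T(t)$ invoked in the proof of Theorem~\ref{thm-mana}(i). As this stays finite for every finite $t$, the solution cannot blow up in finite time and extends to $C_t^0H_x^s([-T,T]\times\mathbb{T})$ for arbitrary $T>0$.

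I expect the main obstacle to be the second stage, namely decoupling the local existence time from the $H^s$-norm. This rests entirely on exploiting the genuine $L^2$-bilinear structure of Lemma~\ref{uv-xsb} instead of the cruder cubic bound used to state Proposition~\ref{loc}, and on verifying that the smallness threshold for $\delta$ depends only on the conserved masses and so remains uniform across the whole iteration; once that is in place, the exponential growth of the $H^s$-norm, and hence global existence, follow routinely.
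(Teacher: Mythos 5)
Your proposal is correct and follows essentially the same route as the paper: the paper likewise uses the $L^2$-level local theory together with the mixed trilinear bound (two factors in $X_\delta^{0,b}$, one in $X_\delta^{s,b}$) to obtain an estimate linear in the $H^s$-level norm, takes a time step $\delta_1$ depending only on the conserved $L^2$-norms, and iterates to get the exponential bound $C^{|t|}\bigl(\Vert f\Vert_{H^s}+\Vert g\Vert_{H^s}\bigr)$. The only detail the paper adds is that the a priori bound is first derived for smooth solutions and then extended to general $H^s$ data by a standard approximation argument.
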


\begin{proof}
Firstly, by the local theory in $L^2$, we can find $\delta_0$ depending on $\Vert f \Vert_{L^2}$ so that,
\[
\Vert u \Vert_{X_{\delta_0}^{0, b}}\lesssim \Vert f \Vert_{L^2},\qquad
\Vert v \Vert_{X_{\delta_0}^{0, b}}\lesssim \Vert g \Vert_{L^2}
\]
hold for $1/2<b<5/8$.
Note that for any smooth solutions $u$ and $v$, the bound
\begin{gather*}
\Vert u(t, x) \Vert_{X_{\delta}^{s, b}}+\Vert v(t, x) \Vert_{X_{\delta}^{s, b}}
\lesssim \Vert f \Vert_{H^s}+\Vert g \Vert_{H^s} \\
\qquad {}+\delta^{1-b-}\big(\Vert u \Vert_{X_{\delta}^{0, b}}^2\Vert u \Vert_{X_{\delta}^{s, b}}+\Vert v \Vert_{X_{\delta}^{0, b}}^2\Vert u \Vert_{X_{\delta}^{s, b}}+\Vert v \Vert_{X_{\delta}^{0, b}}^2\Vert v \Vert_{X_{\delta}^{s, b}}+\Vert u \Vert_{X_{\delta}^{0, b}}^2\Vert v \Vert_{X_{\delta}^{s, b}}\big)
\end{gather*}
holds for any $\delta>0$ and $1/2<b<5/8$. Taking $\delta<\delta_0$, we obtain
\[\Vert u(t, x) \Vert_{X_{\delta}^{s, b}}+\Vert v(t, x) \Vert_{X_{\delta}^{s, b}}\lesssim \Vert f \Vert_{H^s}+\Vert g \Vert_{H^s}+\delta^{1-b-}\big(\Vert f \Vert_{L^2}^2+\Vert g \Vert_{L^2}^2\big)\big(\Vert u \Vert_{X_{\delta}^{s, b}}+\Vert v \Vert_{X_{\delta}^{s, b}}\big).
\]
Therefore, for some $\delta_1\leq\delta_0$ depending only on $\Vert f \Vert_{L^2}$ and $\Vert g \Vert_{L^2}$, we find
\[
\Vert u(t, x) \Vert_{X_{\delta}^{s, b}}+\Vert v(t, x) \Vert_{X_{\delta}^{s, b}} \lesssim \Vert f \Vert_{H^s(\mathbb{T})}+\Vert g \Vert_{H^s(\mathbb{T})}.
\]
This, together with the fact that $b>1/2$ implies the a priori bound
\[
\Vert u \Vert_{H^s(\mathbb{T})}+\Vert v \Vert_{H^s(\mathbb{T})}\leq C\big( \Vert f \Vert_{H^s(\mathbb{T})}+\Vert g \Vert_{H^s(\mathbb{T})}\big)
\]
for $t\in[0, \delta_1]$. Since $\delta_1$ depends only on the $L^2$ norm, iterating the above inequality yields
\[
\Vert u \Vert_{H^s(\mathbb{T})}+\Vert v \Vert_{H^s(\mathbb{T})}\leq C^{|t|}\big( \Vert f \Vert_{H^s(\mathbb{T})}+\Vert g \Vert_{H^s(\mathbb{T})}\big),
\]
which can be extended to any $H^s$ solutions by smooth approximation in a standard way. The global well-posedness is thereby proved.
\end{proof}

\section{Numerical simulation of the Manakov system on the torus}\label{s4}

In this section, we will continue our exploration of the effect of periodicity on rough initial data for the multi-component system in the context of the Manakov system \eqref{mana}. The goal of the present study is to investigate to what extent the dichotomy phenomena of the dispersive quantization and fractalization persist into the nonlinear multi-component regime. Basic numerical technique-the Fourier spectral method will be employed to approximate the solutions to periodic boundary conditions on $[-\pi,\pi]$, and the step functions
 \begin{equation*}
 \sigma_1(x)=
\begin{cases}
-1, & 0\leq x<\pi,\\
\hphantom{-}1, & \pi\leq x<2\pi,
\end{cases}
\qquad \text{and}
 \qquad
 \sigma_2(x)=
\begin{cases}
\displaystyle -\frac{1}{10}, & 0\leq x<\pi,\vspace{1mm}\\
\displaystyle \hphantom{-}\frac{1}{10}, & \pi\leq x<2\pi,
\end{cases}
\end{equation*}
as initial data.

As we will see, the numerical studies indicate that the dispersive revival nature admitted by the associated linearization will extend the nonlinear regime. However, in contrast to what was observed in the NLS equation, some subtle qualitative details, for instance, the shape of the curves between jump discontinuities will be affected by the nonlinearly coupling of different components.

\subsection{The Fourier spectral method}

Let us first summarize the basic ideas behind the Fourier spectral method for approximating the solutions to nonlinear equations. One can refer \cite{GO, Tre} for details of the method.
Formally, consider the initial value problem for a nonlinear evolution equation
\begin{equation}\label{ns-eq}
u_t=K[u],\qquad u(0, x)=u_0(x),
\end{equation}
 where $K$ is a differential operator in the spatial variable with no explicit time dependence. Suppose $K$ can be written as $K=L+N$, in which $L$ is a linear operator characterized by its Fourier transform $\widehat{Lu}(k)=\omega(k)\widehat{u}(k)$, while $N$ is a nonlinear operator. We use~$\mathcal{F}[\cdot]$ and~$\mathcal{F}^{-1}[\cdot]$ denote the Fourier transform and inverse Fourier transform of the indicated function, respectively, so that the Fourier transform for equation in~\eqref{ns-eq} takes the form
\[\widehat{u}_t=\omega(k)\widehat{u}+\mathcal{F}\bigl[N\big(\mathcal{F}^{-1}[\widehat{u}]\big)\bigr].\]
Firstly, periodicity and discretization of the spatial variable enables us to apply the fast Fourier transform (FFT) based on, for instance, 512 space nodes, and arrive at a system of ordinary differential equations (ODEs), which we solve numerically. For simplicity, we adopt a uniform time step $0<\Delta t\ll1$, and seek to approximate the solution $\hat{u}(t_n)$ at the successive times $t_n=n\Delta t$ for $n=0, 1, \ldots$. The classic fourth-order Runge--Kutta method, which has a local truncation error of $O\big((\Delta t)^5\big)$, is adopted, and its iterative scheme is given by
\[
\widehat{u}(t_{n+1})=\widehat{u}(t_n)+\frac{1}{6}(f_{k_1}+2f_{k_2}+2f_{k_3}+f_{k_4}),\qquad n=0, 1, \ldots, \quad \widehat{u}(t_0)=\widehat{u}_0(k),
\]
where
\begin{gather*}
f_{k_1}=f(t_n, \widehat{u}(t_n)),\qquad
f_{k_2}= f(t_n+\Delta t/2,\widehat{u}(t_n)+\Delta t f_{k_1}/2),\\
f_{k_3}= f(t_n+\Delta t/2, \widehat{u}(t_n)+\Delta t f_{k_2}/2),\qquad
f_{k_4}= f(t_n+\Delta t, \widehat{u}(t_n)+\Delta t f_{k_3}),
\end{gather*}
where
$f(t, \widehat{u})=\omega(k)\widehat{u}+\mathcal{F}\bigl[ N\big(\mathcal{F}^{-1}[\widehat{u}]\big)\bigr]$.
Accordingly, the approximate solution $u(t, x)$ can be obtained through the inverse discrete Fourier transform.

\begin{figure}[t]
 \centering

 \includegraphics[width=0.3\textwidth]{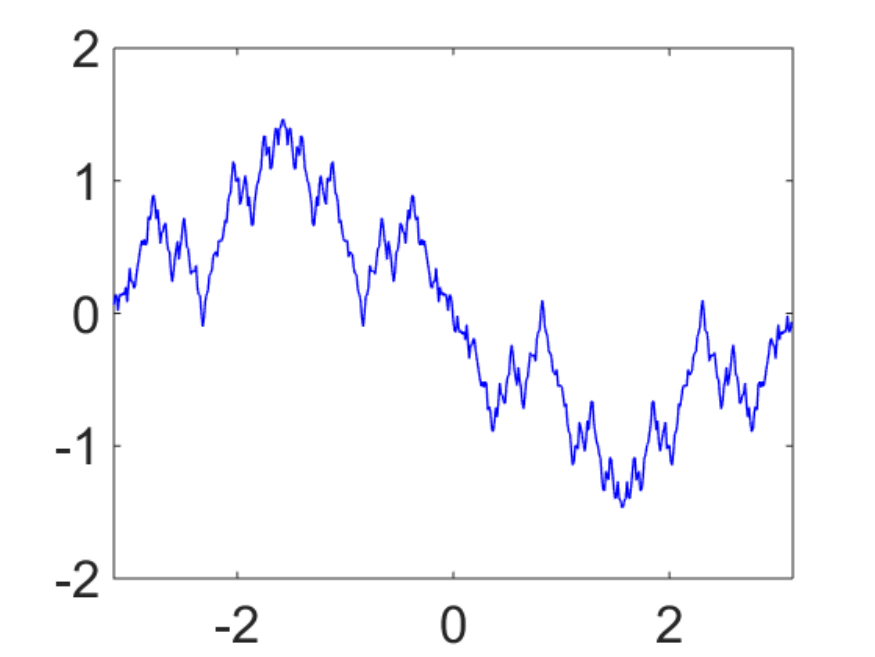}
\quad
 \includegraphics[width=0.3\textwidth]{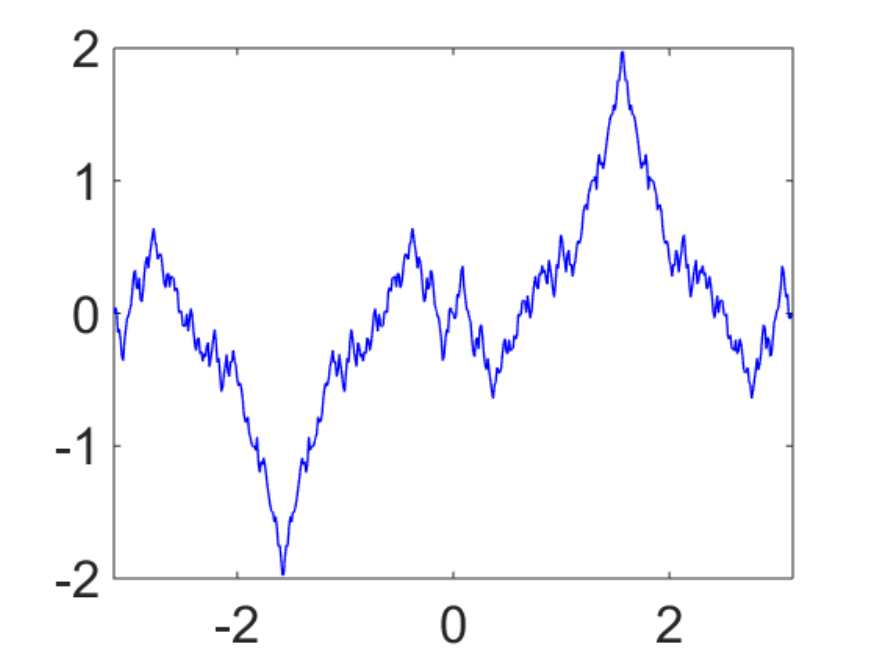}
\quad
 \includegraphics[width=0.3\textwidth]{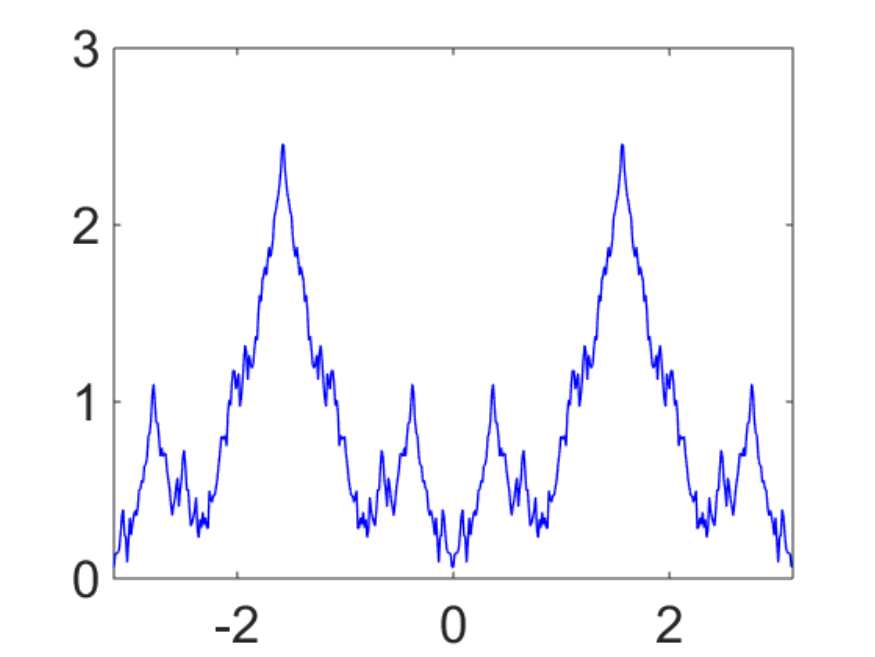}

(a) $t=0.3$

\medskip

 \includegraphics[width=0.3\textwidth]{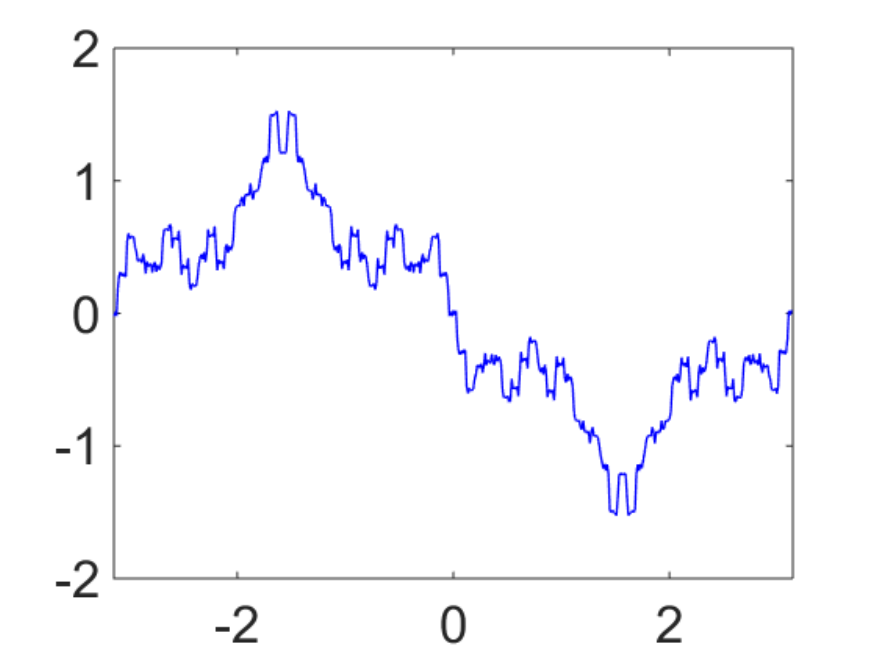}
\quad
 \includegraphics[width=0.3\textwidth]{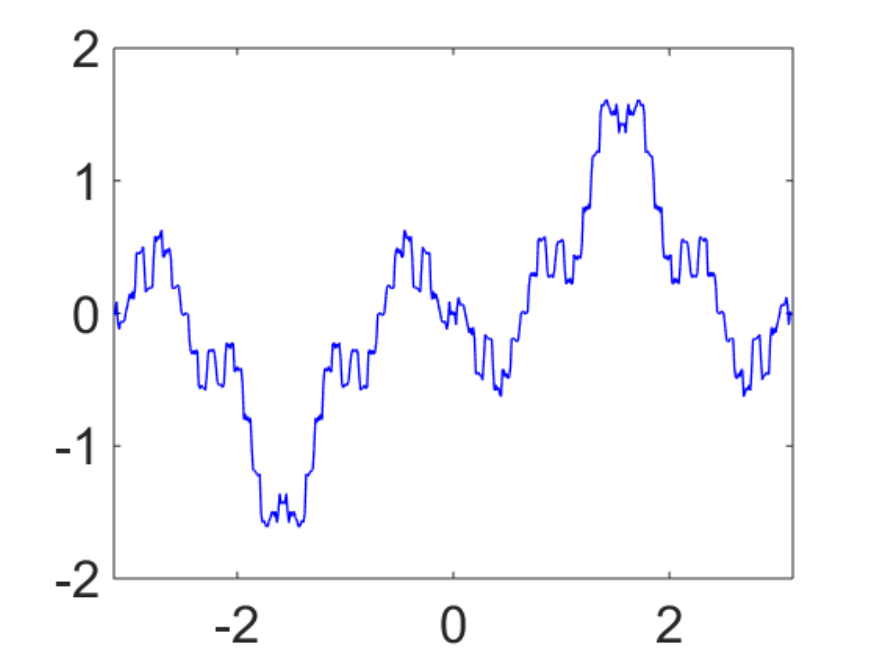}
\quad
 \includegraphics[width=0.3\textwidth]{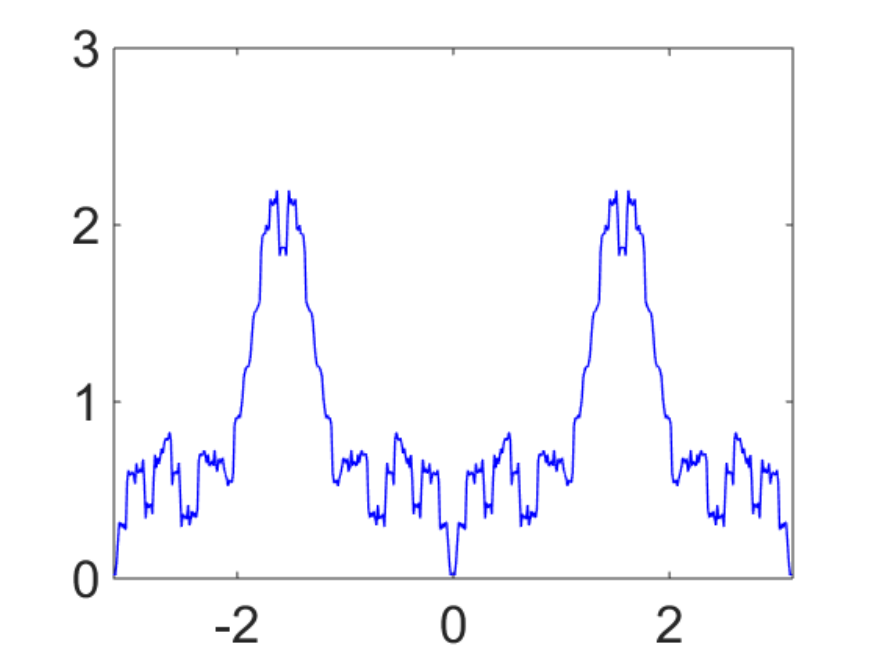}

(b) $t=0.31$

\medskip

 \includegraphics[width=0.3\textwidth]{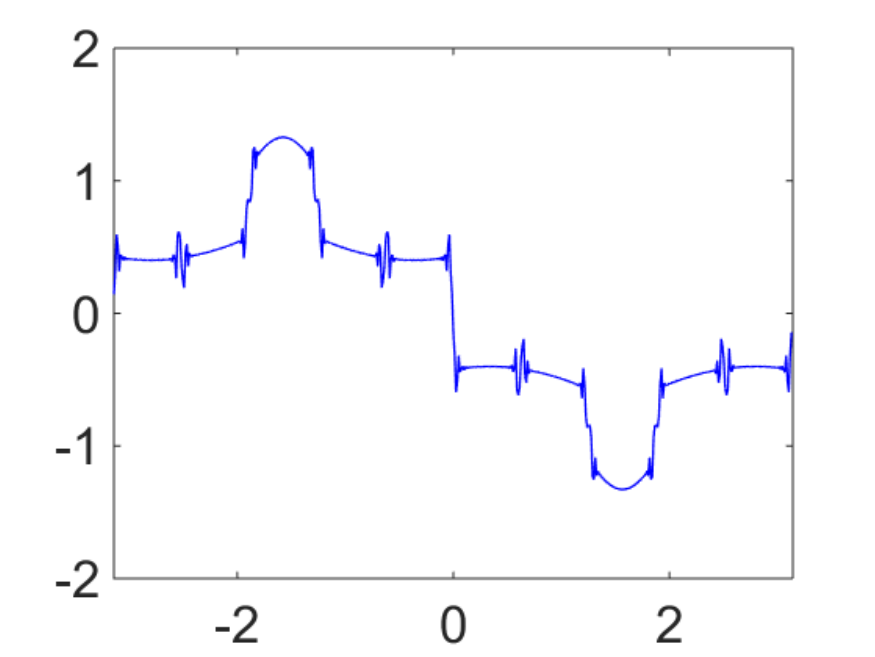}
\quad
 \includegraphics[width=0.3\textwidth]{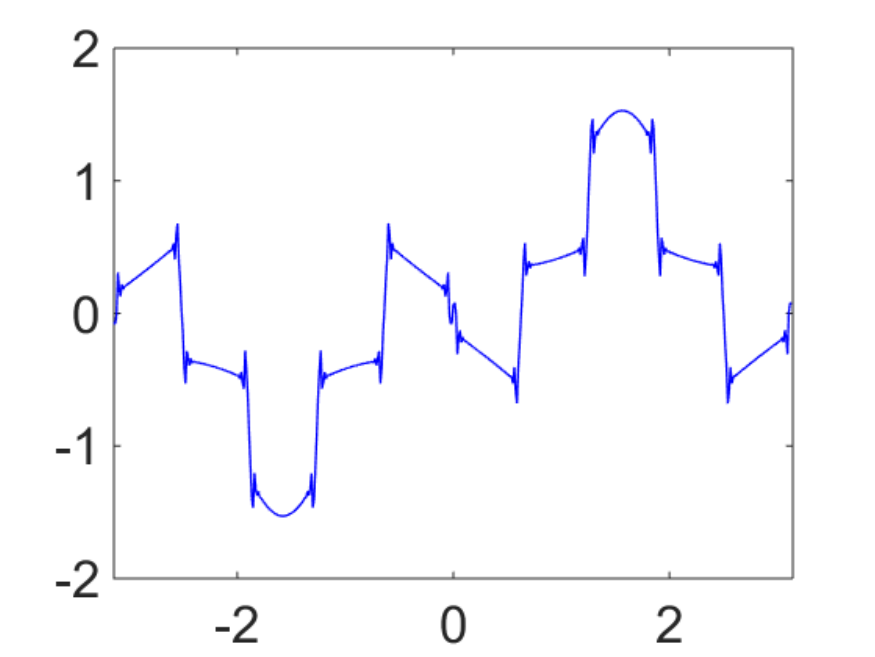}
\quad
 \includegraphics[width=0.3\textwidth]{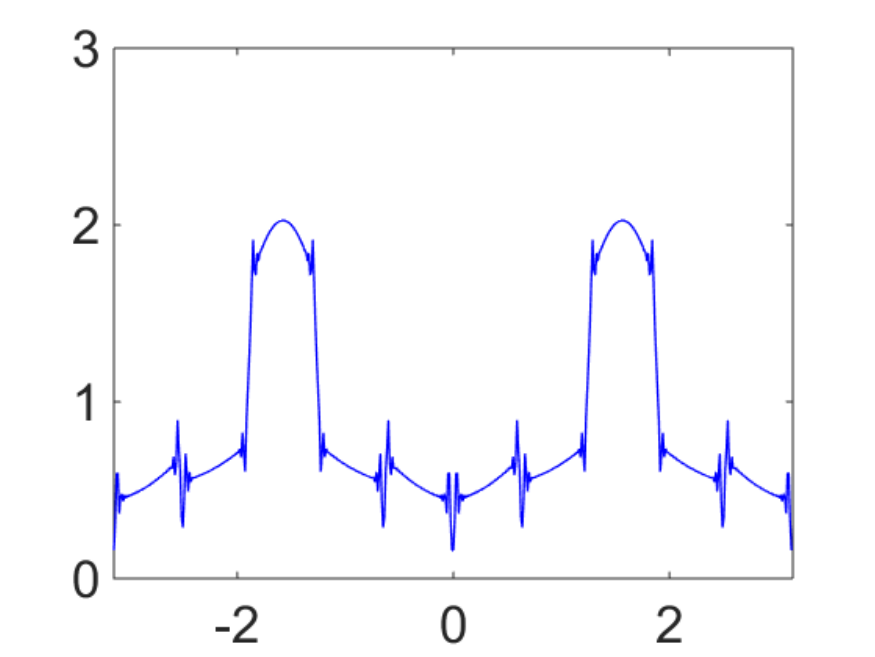}

(c) $t=0.314$

 \includegraphics[width=0.3\textwidth]{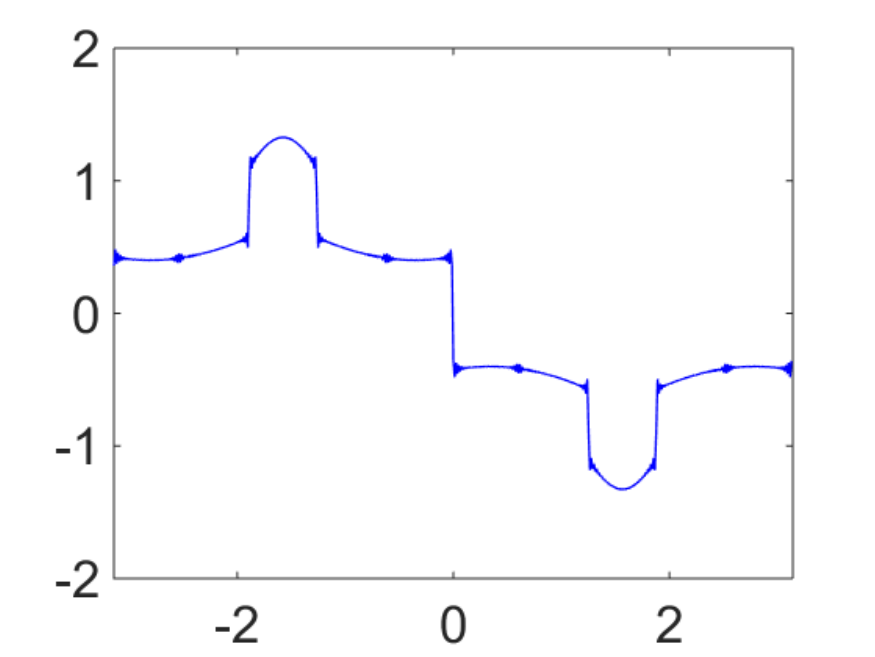}
\quad
 \includegraphics[width=0.3\textwidth]{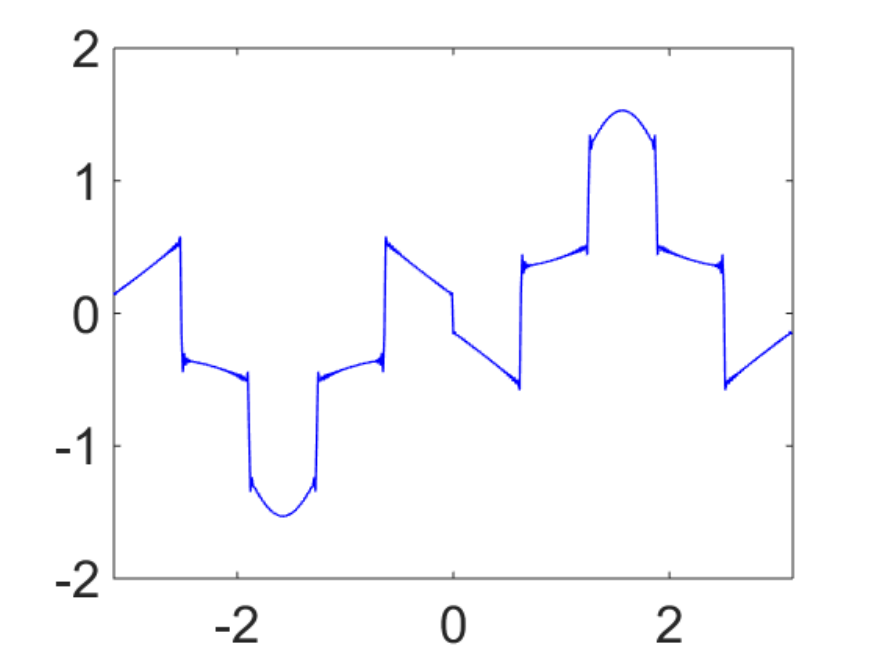}
\quad
 \includegraphics[width=0.3\textwidth]{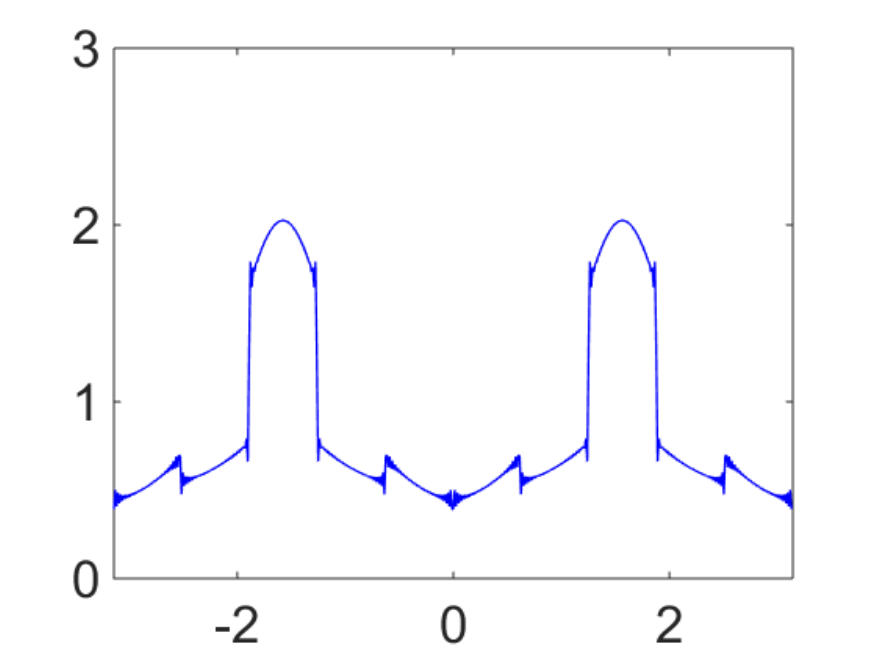}

(d) $t=\pi/10$

 \caption{The solution $u(t, x)$ to the periodic initial-boundary value problem~\eqref{ibv-mana} for the Manakov system with the initial data $f(x)=g(x)=\sigma_1(x)$.} \label{ibv-mana-u-11}
 \end{figure}

 \begin{figure}[th]
 \centering

 \includegraphics[width=0.3\textwidth]{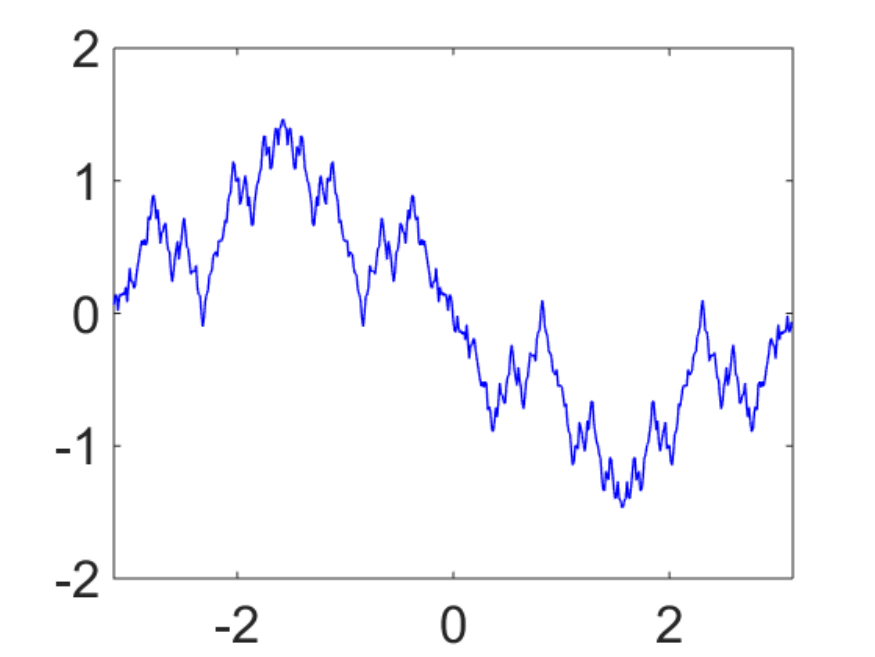}
\quad
 \includegraphics[width=0.3\textwidth]{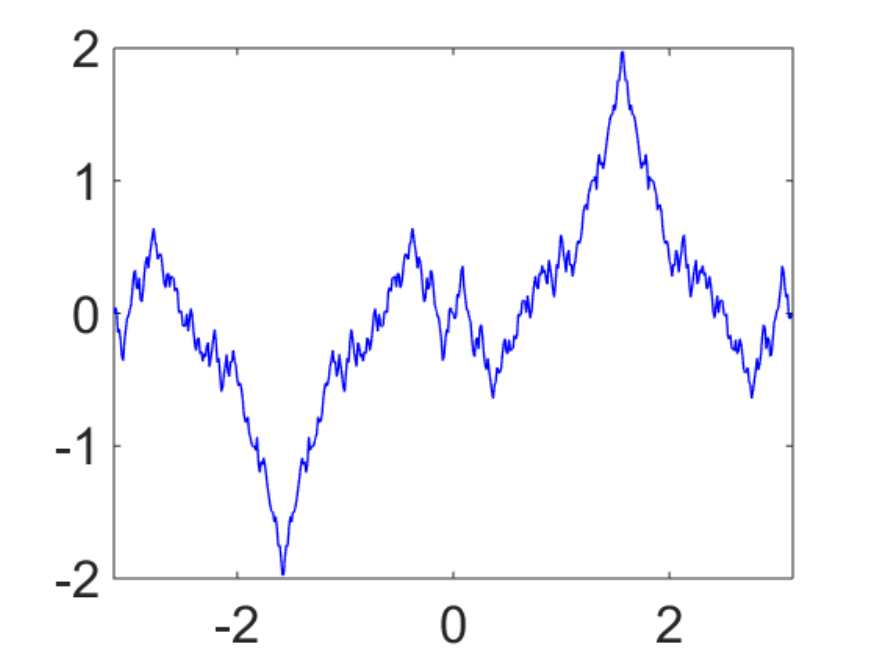}
\quad
 \includegraphics[width=0.3\textwidth]{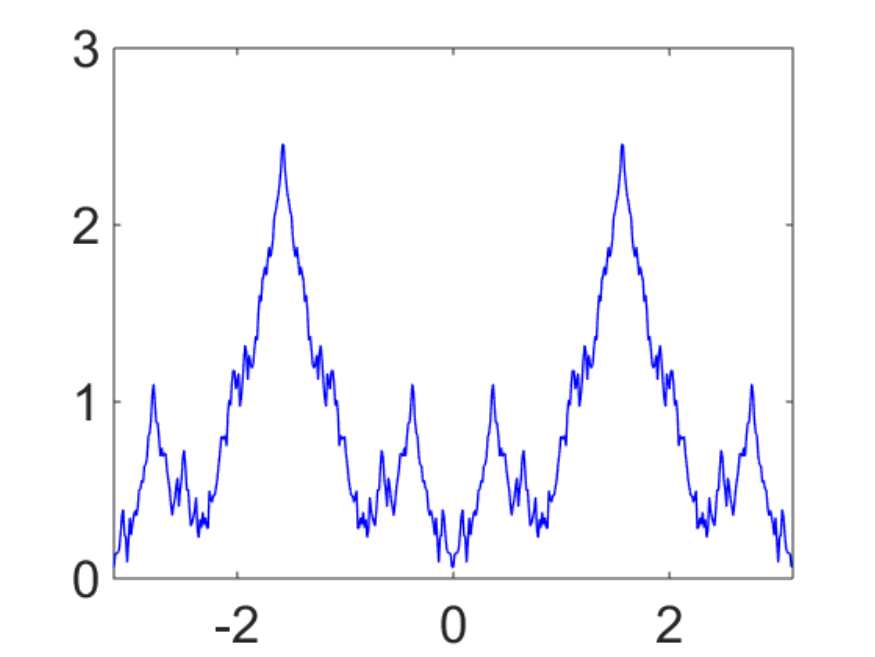}

(a) $t=0.3$

\medskip

 \includegraphics[width=0.3\textwidth]{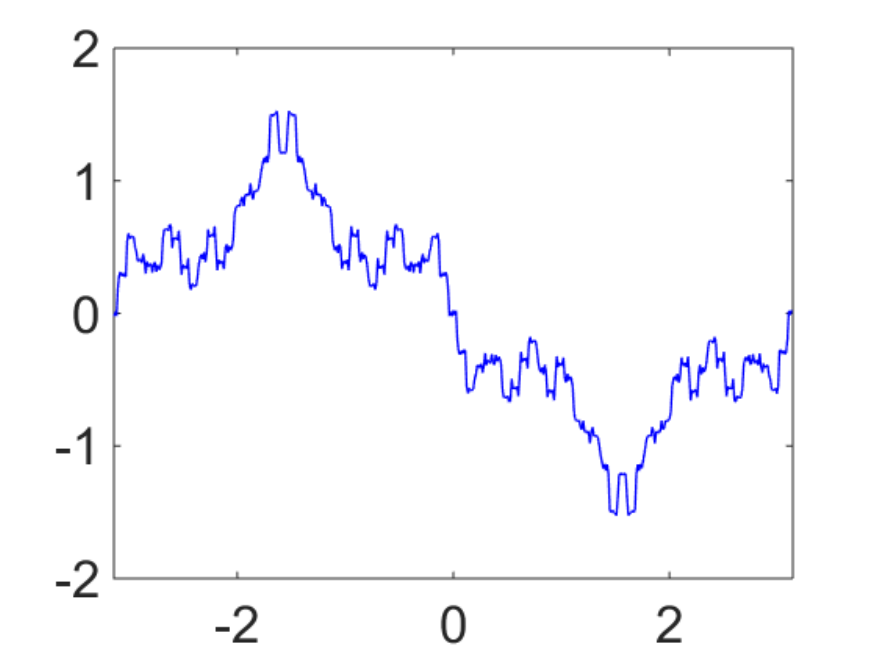}
\quad
 \includegraphics[width=0.3\textwidth]{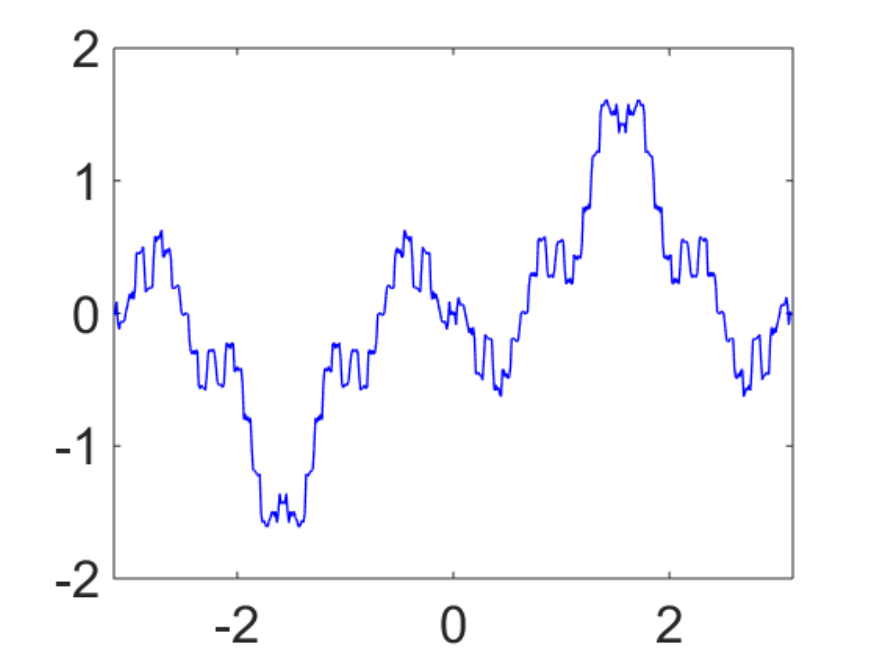}
\quad
 \includegraphics[width=0.3\textwidth]{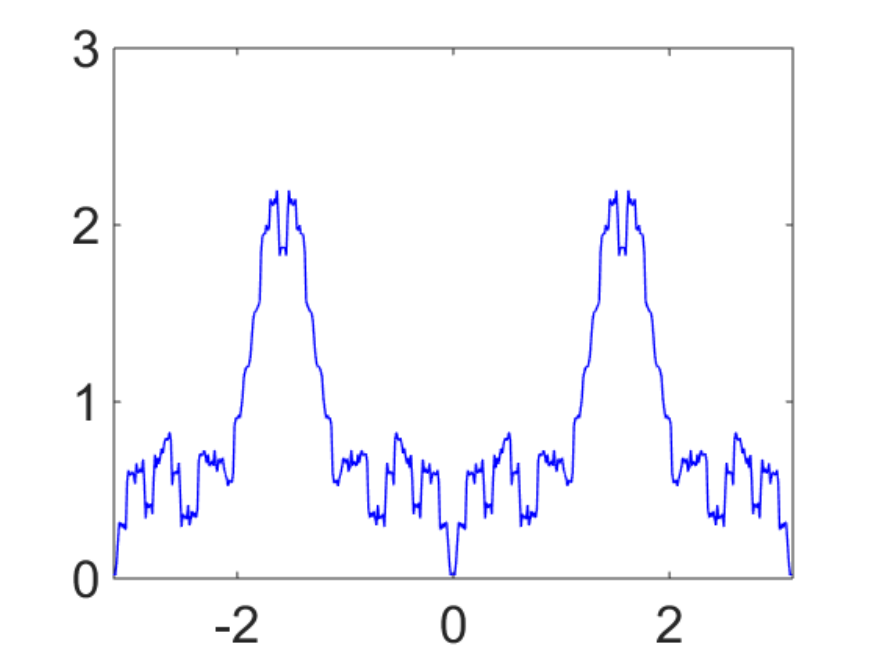}

(b) $t=0.31$

\medskip

 \includegraphics[width=0.3\textwidth]{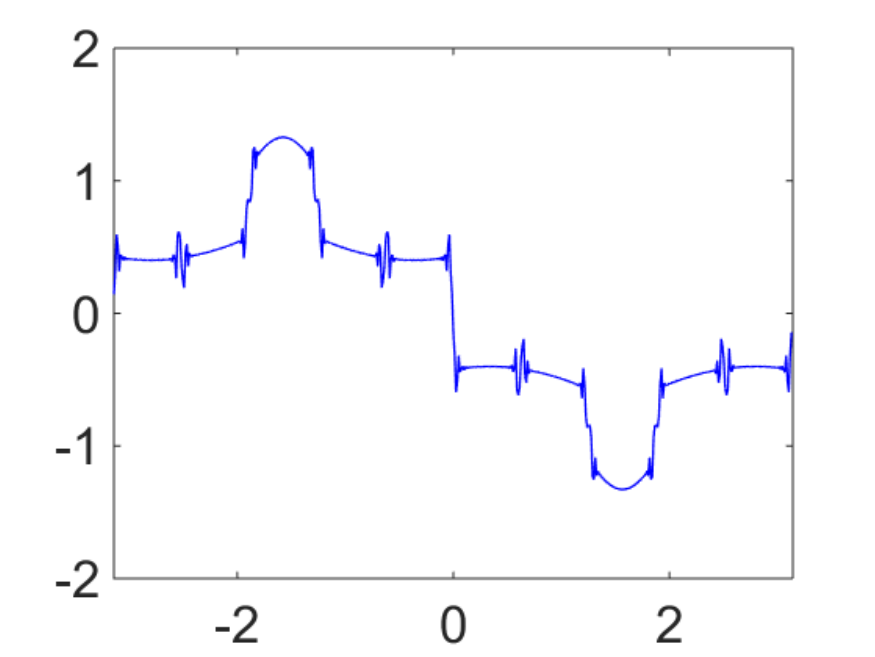}
\quad
 \includegraphics[width=0.3\textwidth]{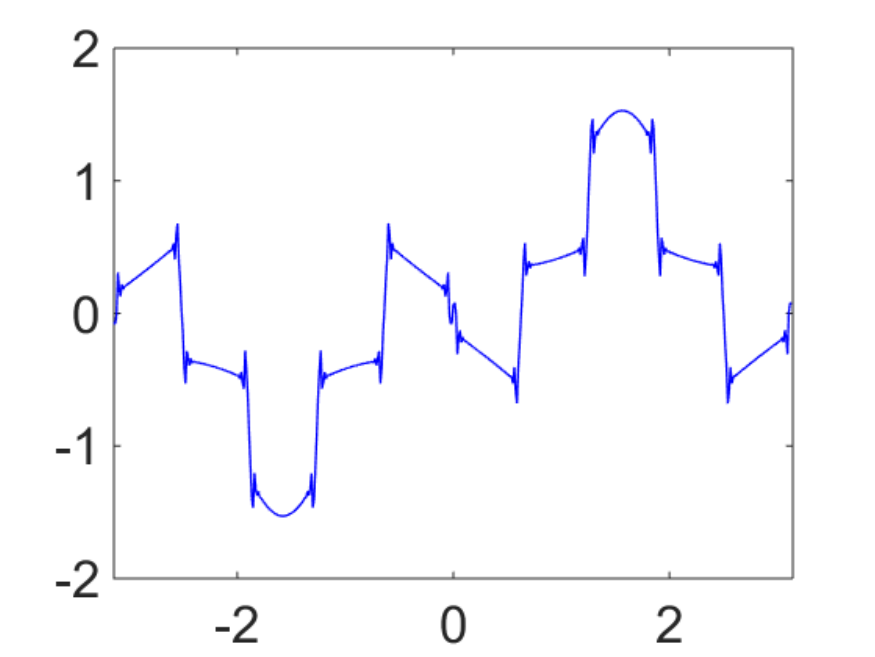}
\quad
 \includegraphics[width=0.3\textwidth]{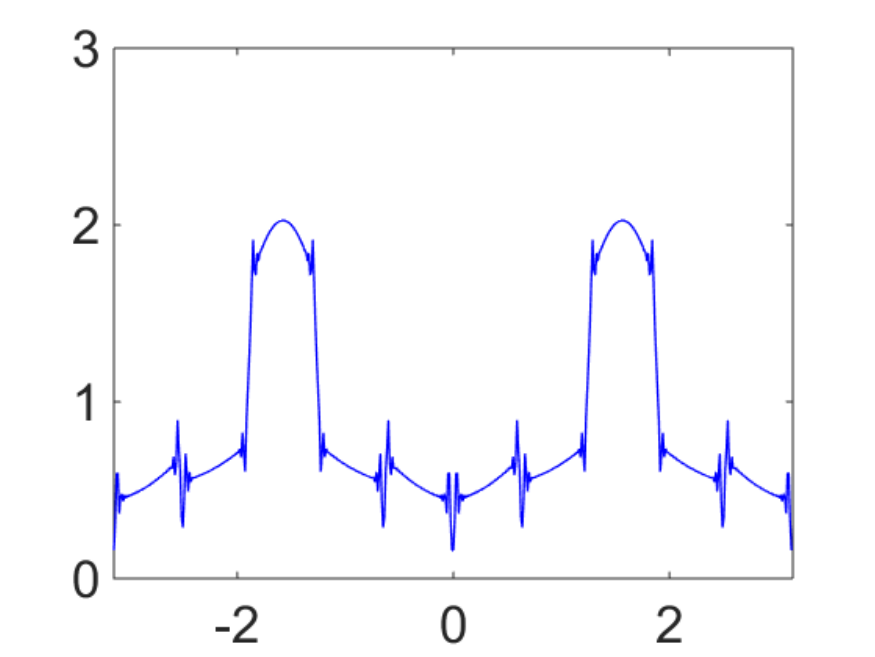}

(c) $t=0.314$

\medskip

 \includegraphics[width=0.3\textwidth]{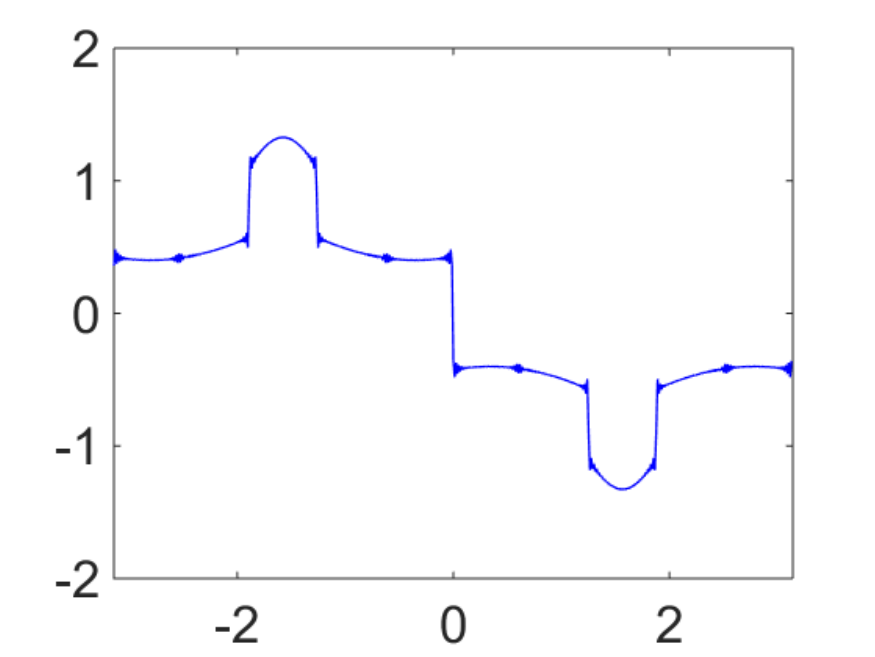}
\quad
 \includegraphics[width=0.3\textwidth]{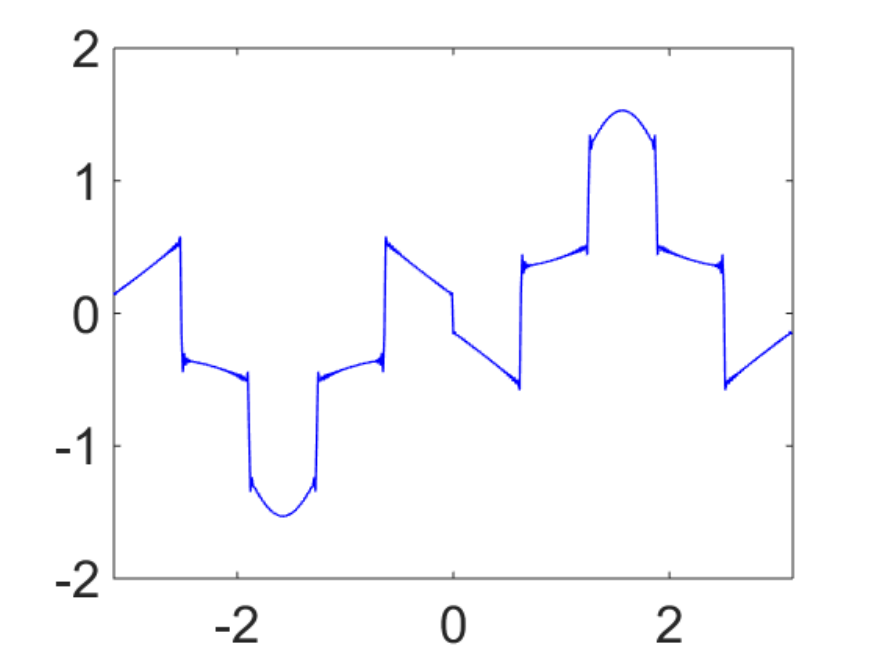}
\quad
 \includegraphics[width=0.3\textwidth]{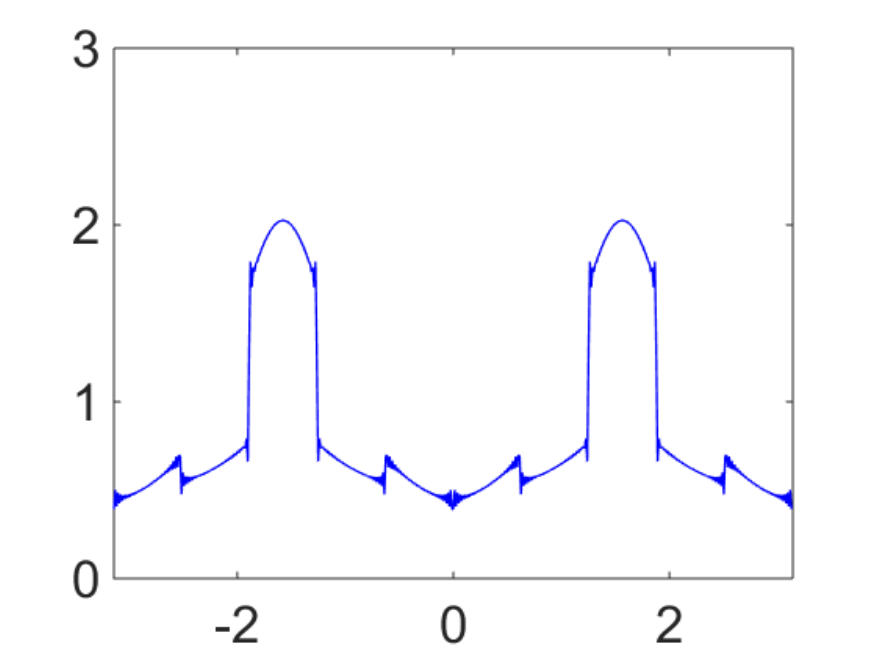}

(d) $t=\pi/10$

 \caption{The solution $v(t, x)$ to the periodic initial-boundary value problem~\eqref{ibv-mana} for the Manakov system with the initial data $f(x)=g(x)=\sigma_1(x)$.}
 \label{ibv-mana-v-11}
 \end{figure}

\begin{figure}[ht]
 \centering

 \includegraphics[width=0.3\textwidth]{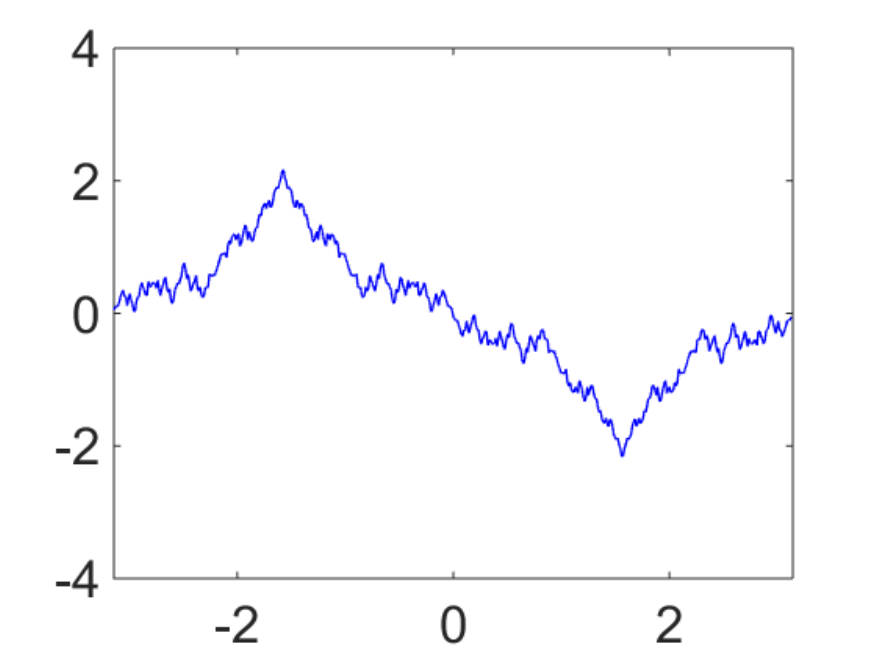}
\quad
 \includegraphics[width=0.3\textwidth]{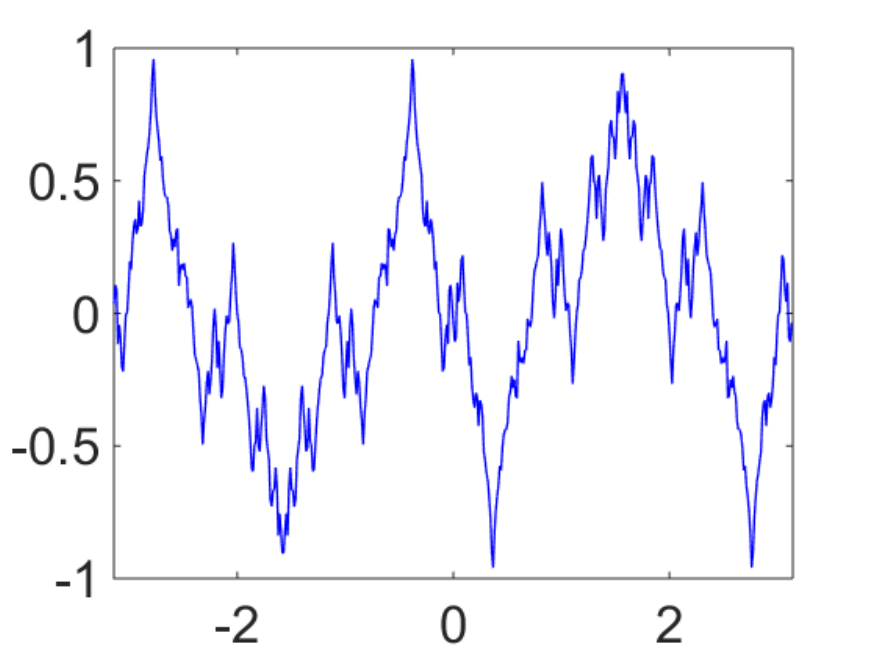}
\quad
 \includegraphics[width=0.3\textwidth]{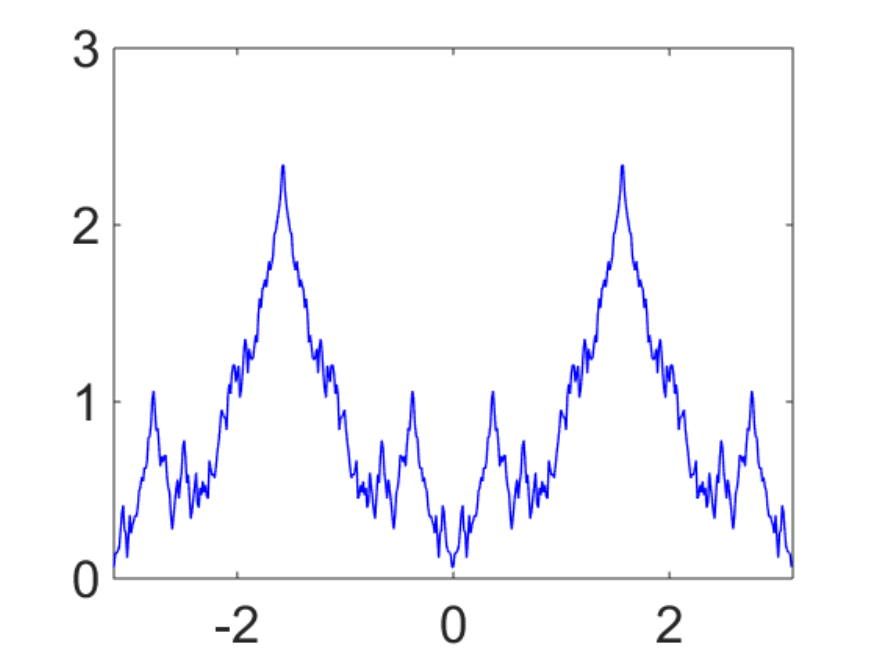}

(a) $t=0.3$

\medskip

 \includegraphics[width=0.3\textwidth]{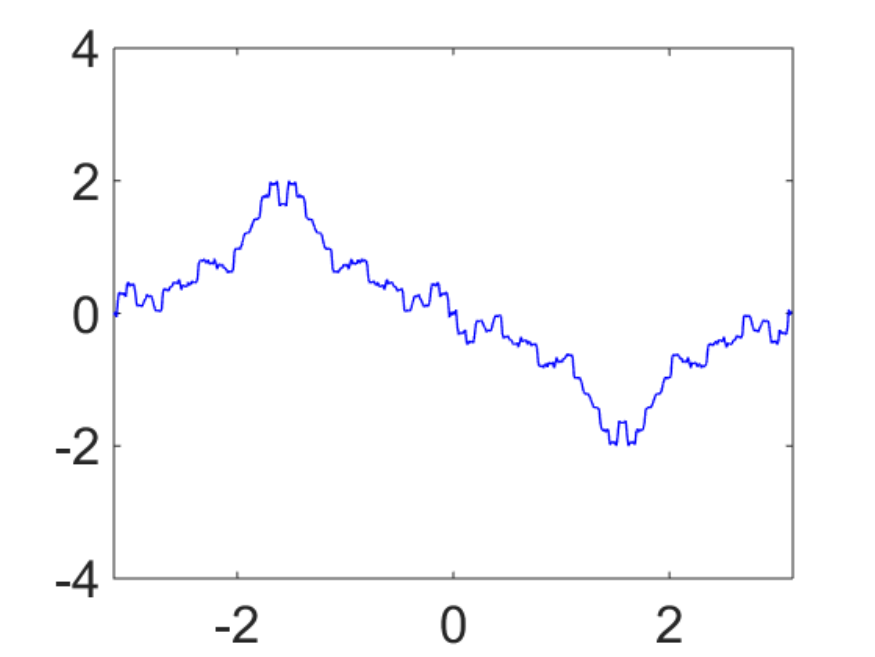}
\quad
 \includegraphics[width=0.3\textwidth]{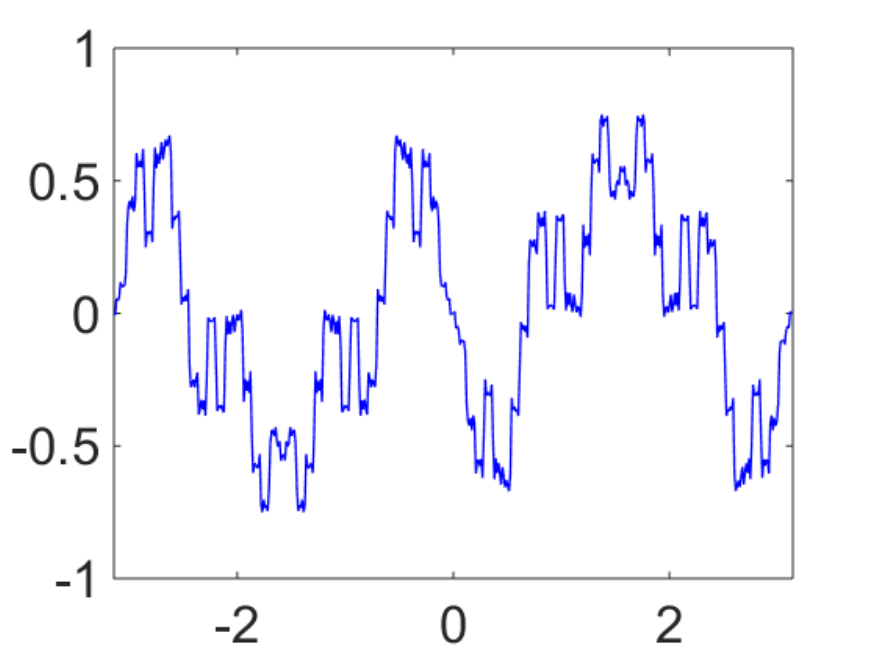}
\quad
 \includegraphics[width=0.3\textwidth]{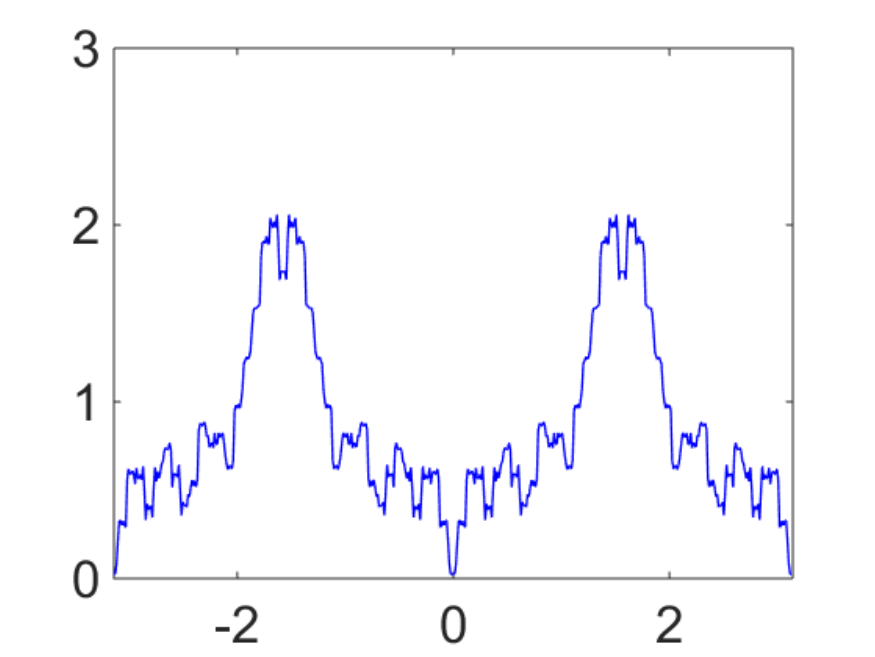}

(b) $t=0.31$

\medskip

 \includegraphics[width=0.3\textwidth]{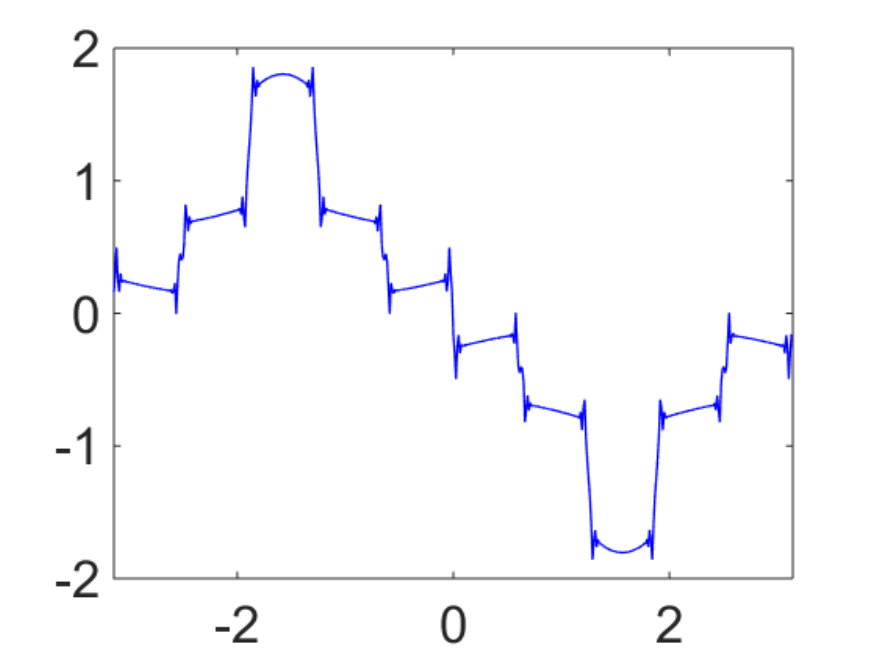}
\quad
 \includegraphics[width=0.3\textwidth]{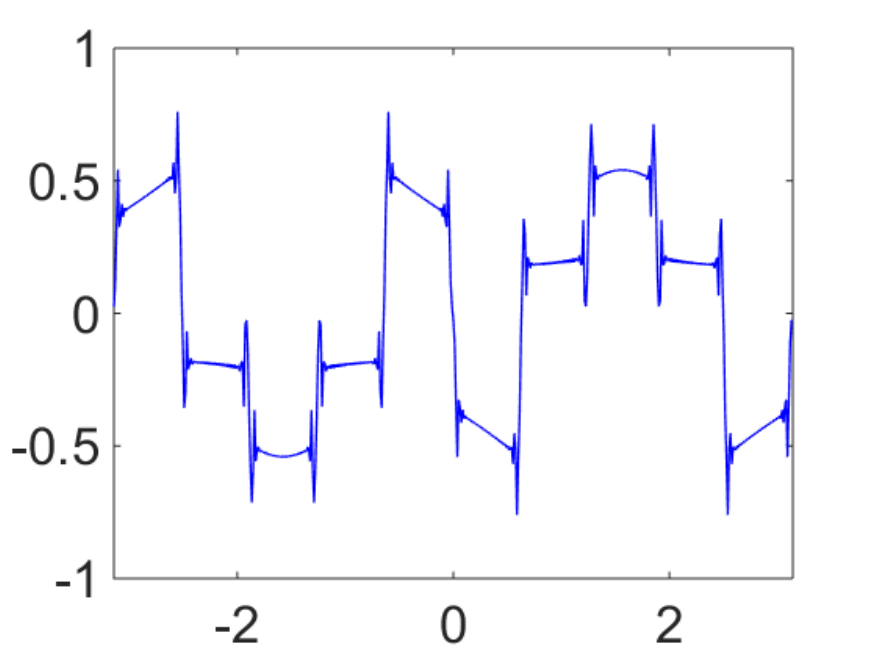}
\quad
 \includegraphics[width=0.3\textwidth]{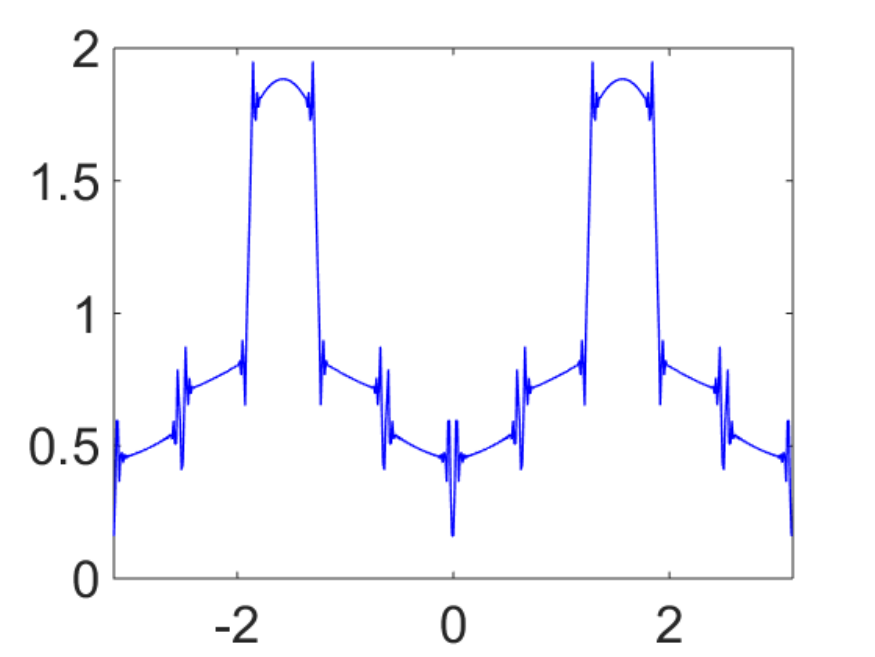}

(c) $t=0.314$

\medskip

 \includegraphics[width=0.3\textwidth]{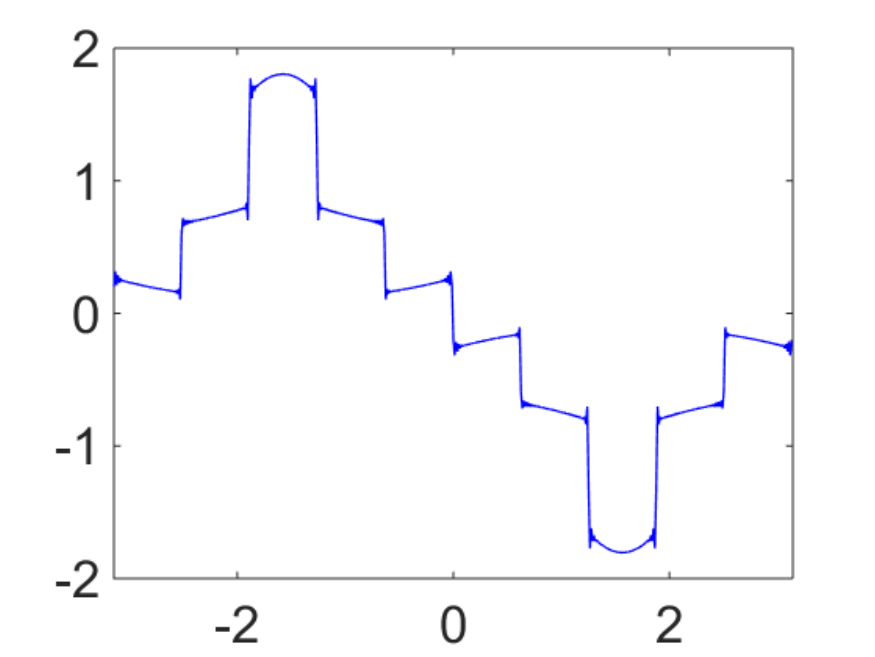}
\quad
 \includegraphics[width=0.3\textwidth]{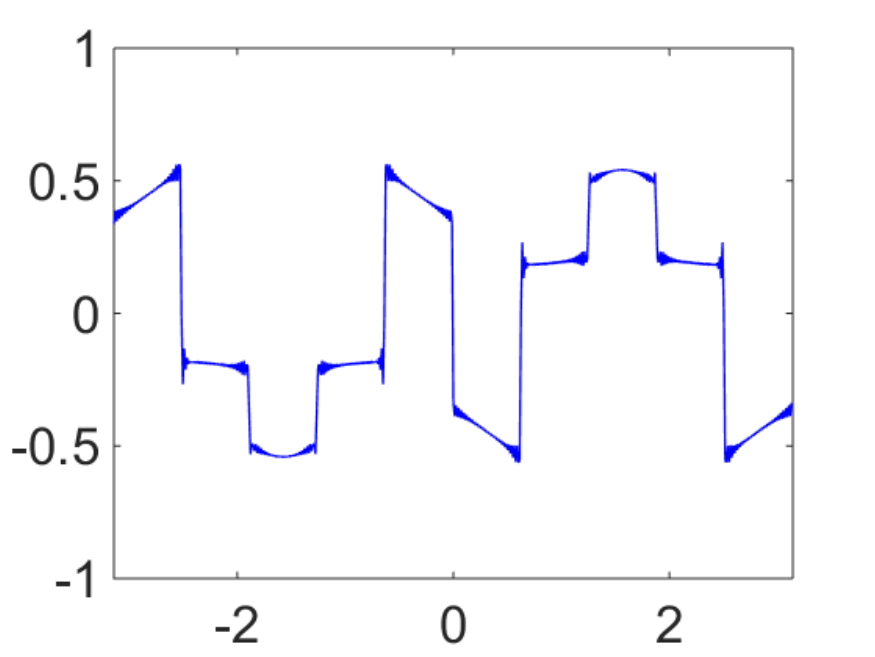}
\quad
 \includegraphics[width=0.3\textwidth]{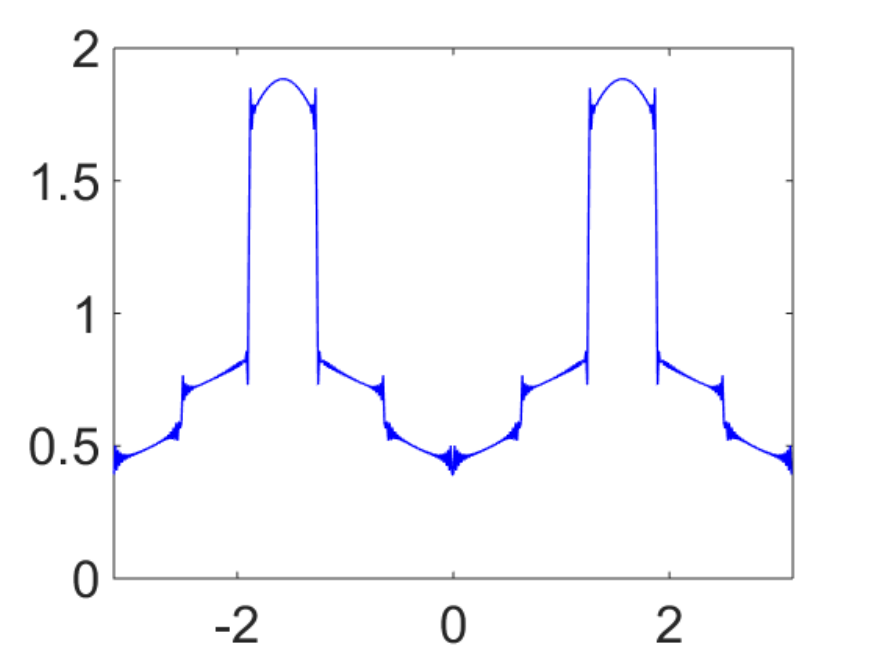}

(d) $t=\pi/10$

 \caption{The solution $u(t, x)$ to the periodic initial-boundary value problem \eqref{ibv-mana} for the Manakov system with the initial data $f(x)=\sigma_1(x)$, $g(x)=\sigma_2(x)$.} \label{ibv-mana-u-12}
 \end{figure}

\begin{figure}[t]
 \centering

 \includegraphics[width=0.3\textwidth]{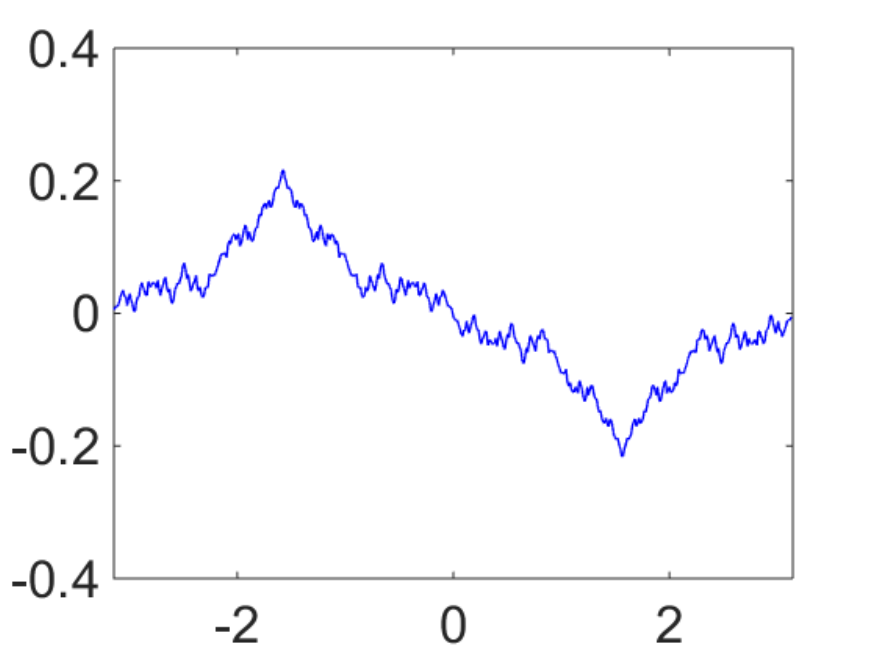}
\quad
 \includegraphics[width=0.3\textwidth]{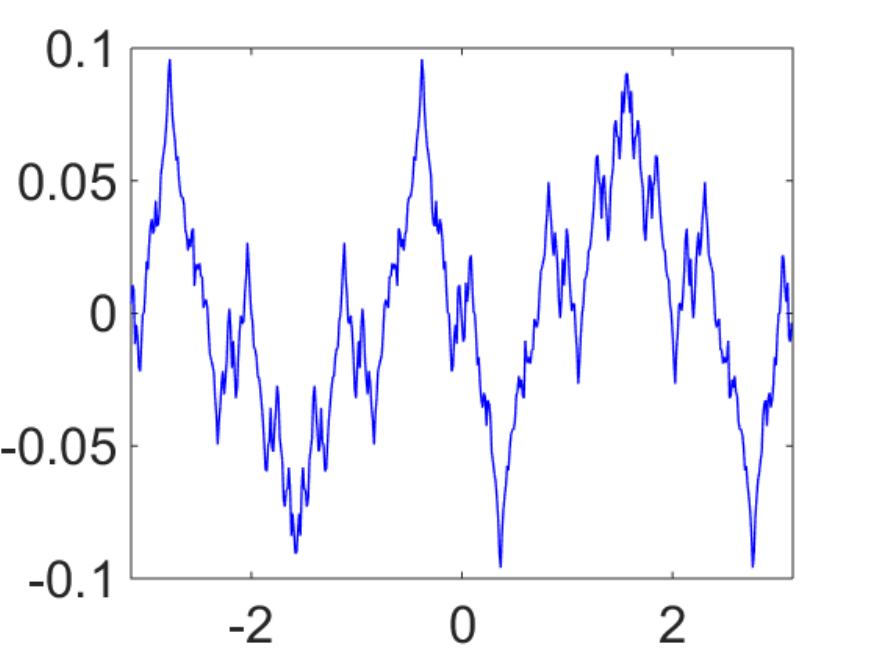}
\quad
 \includegraphics[width=0.3\textwidth]{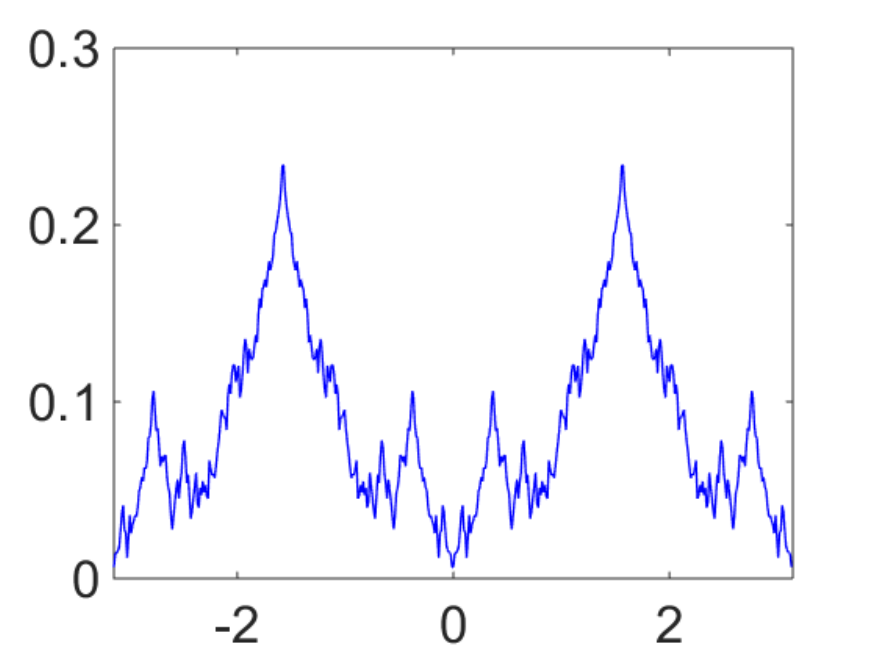}

(a) $t=0.3$

\medskip

 \includegraphics[width=0.3\textwidth]{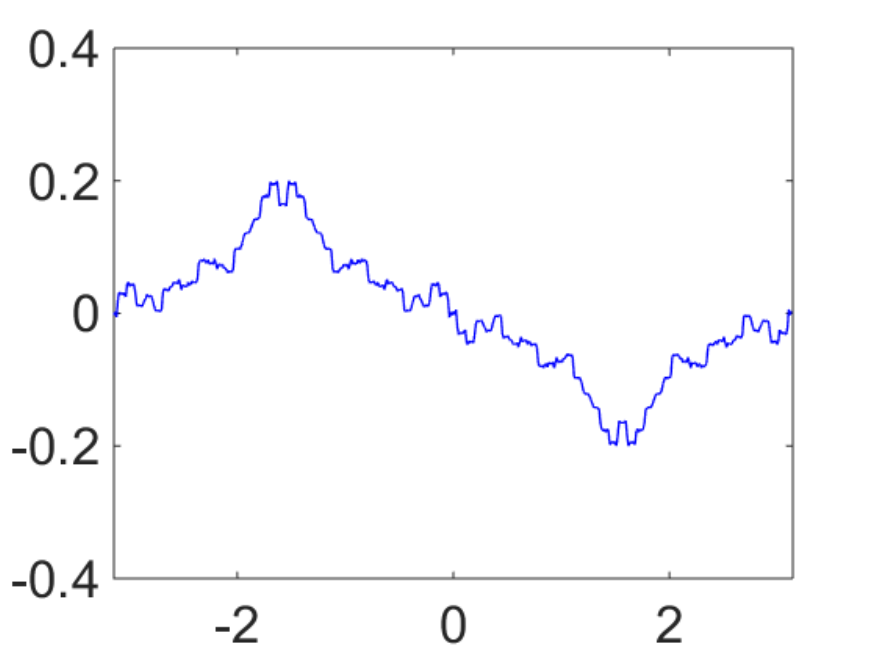}
\quad
 \includegraphics[width=0.3\textwidth]{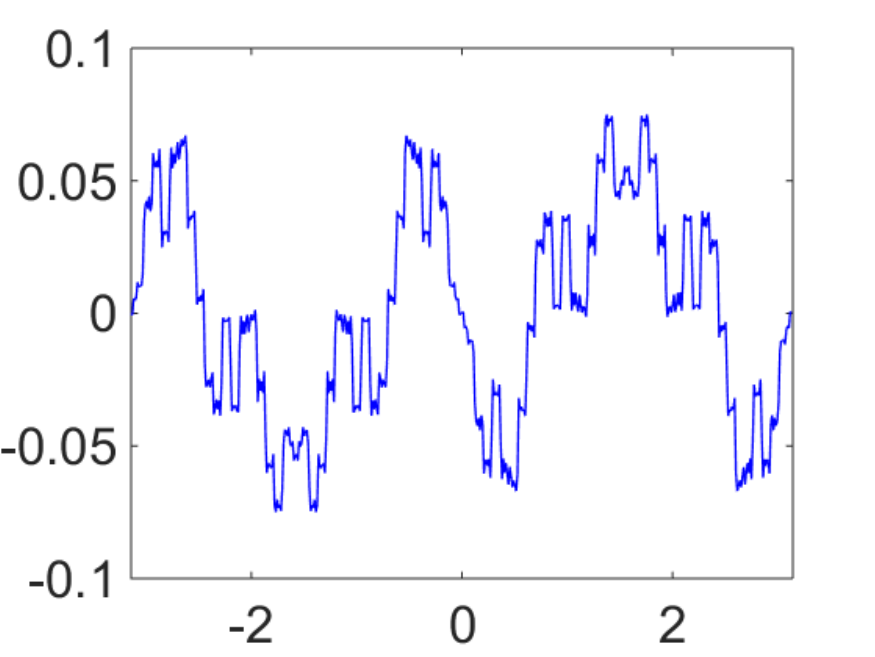}
\quad
 \includegraphics[width=0.3\textwidth]{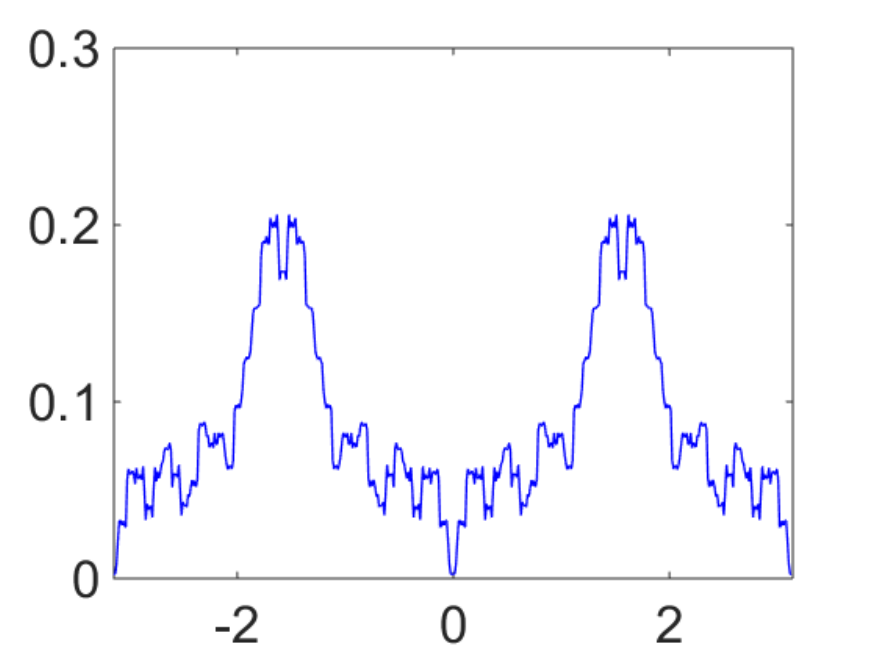}

(b) $t=0.31$

\medskip

 \includegraphics[width=0.3\textwidth]{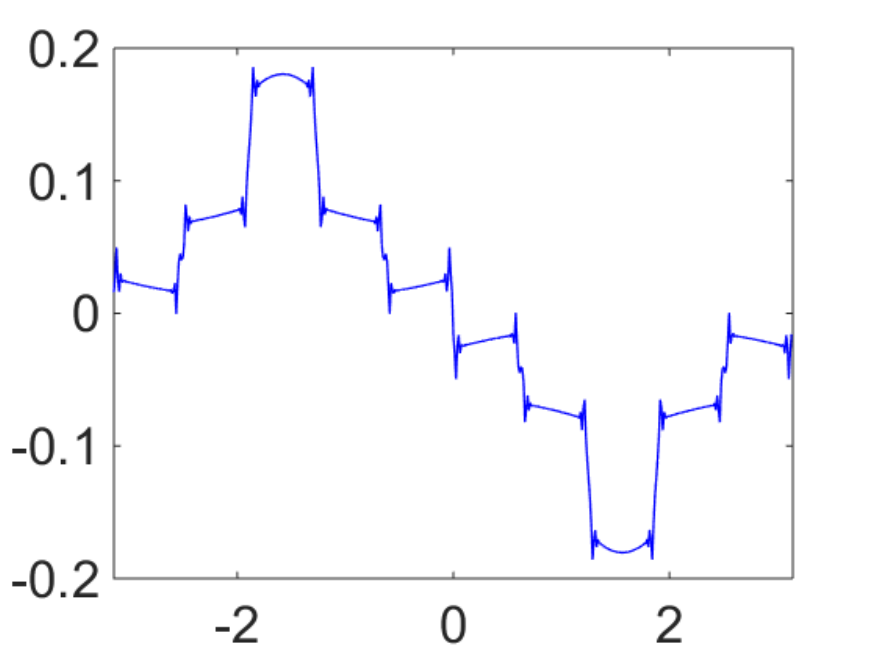}
\quad
 \includegraphics[width=0.3\textwidth]{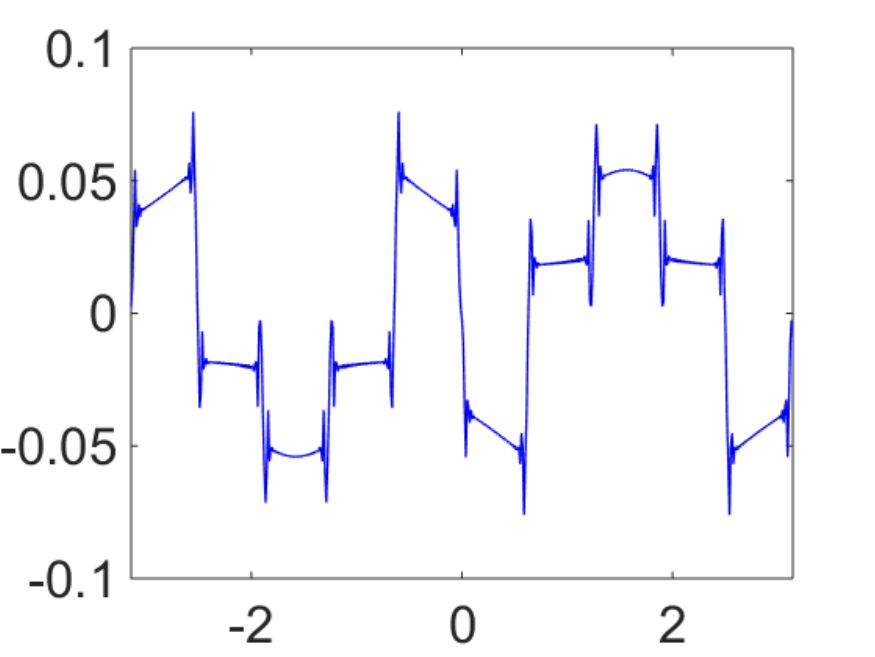}
\quad
 \includegraphics[width=0.3\textwidth]{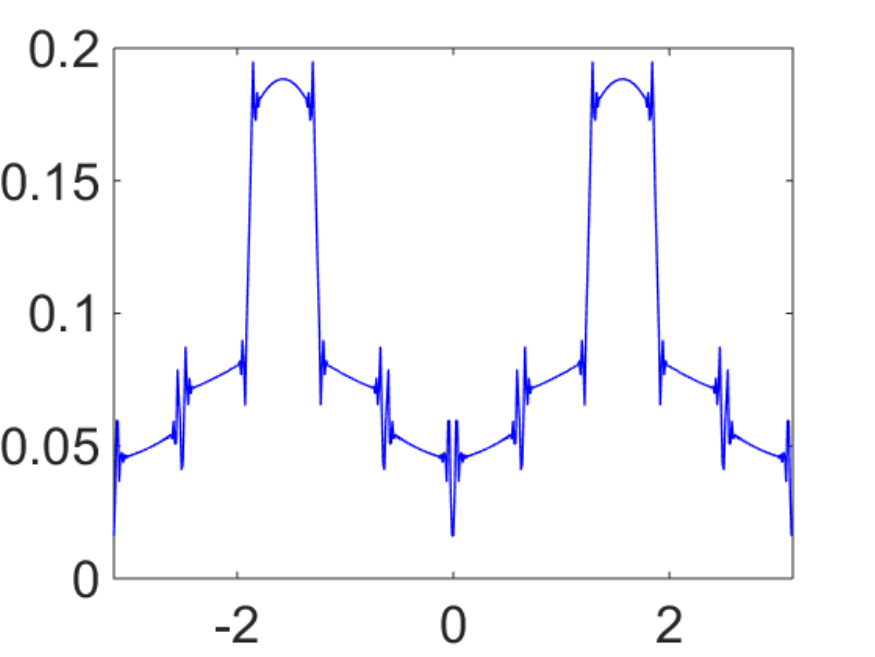}

(c) $t=0.314$

\medskip

 \includegraphics[width=0.3\textwidth]{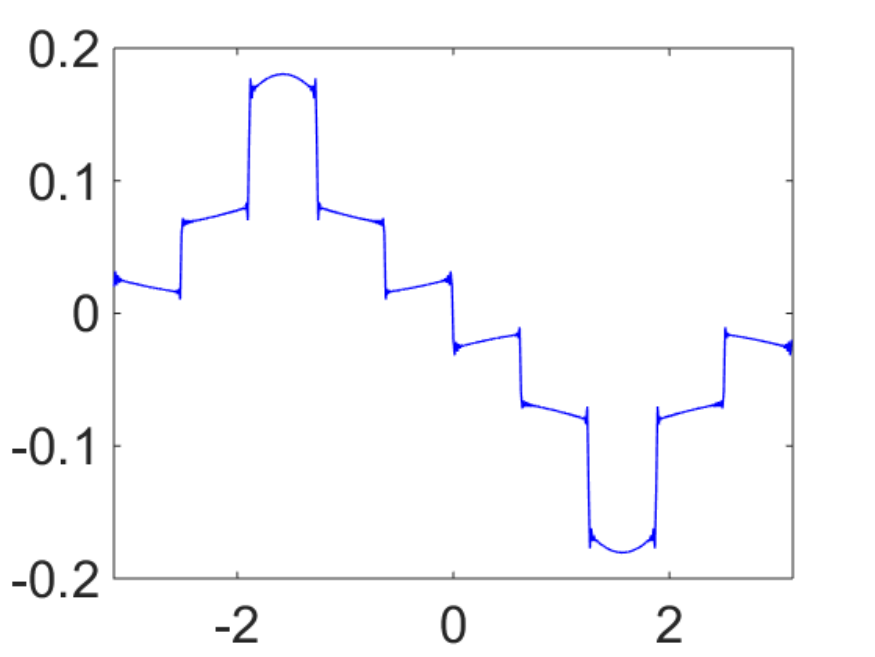}
\quad
 \includegraphics[width=0.3\textwidth]{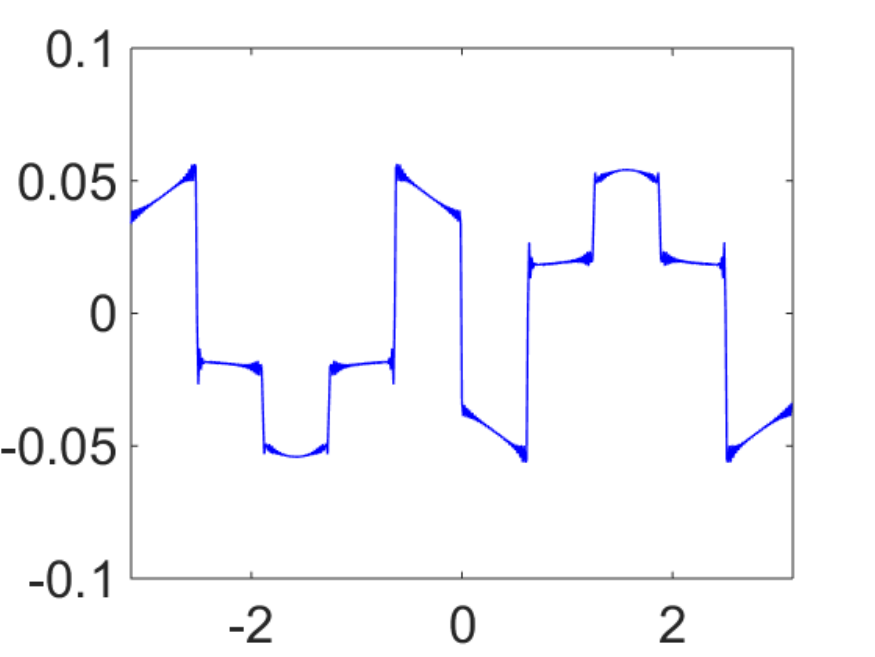}
\quad
 \includegraphics[width=0.3\textwidth]{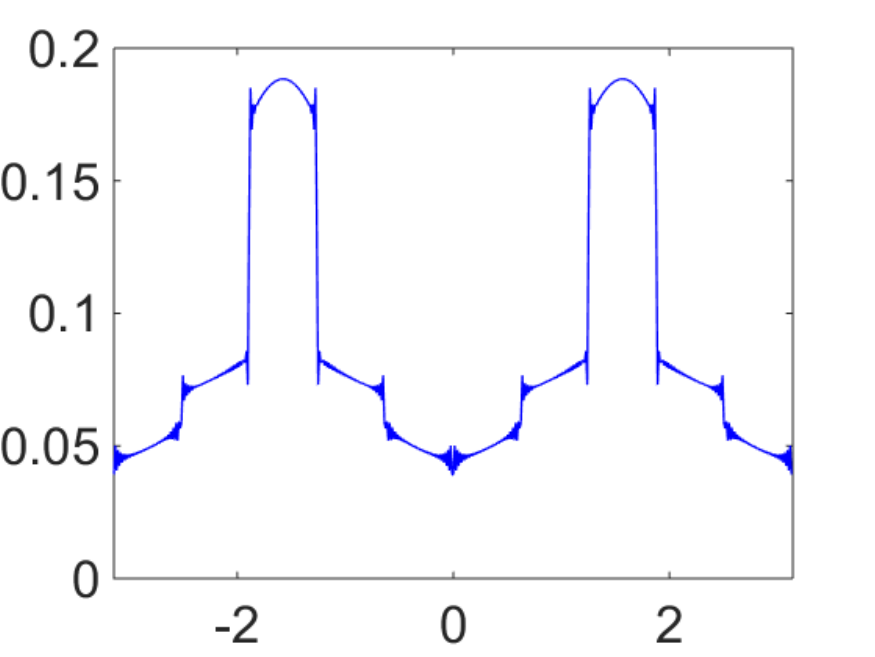}

(d) $t=\pi/10$

 \caption{The solution $v(t, x)$ to the periodic initial-boundary value problem \eqref{ibv-mana} for the Manakov system with the initial data $f(x)=\sigma_1(x)$, $g(x)=\sigma_2(x)$.}
 \label{ibv-mana-v-12}
 \end{figure}

 \begin{figure}[ht] \centering

 \includegraphics[width=0.3\textwidth]{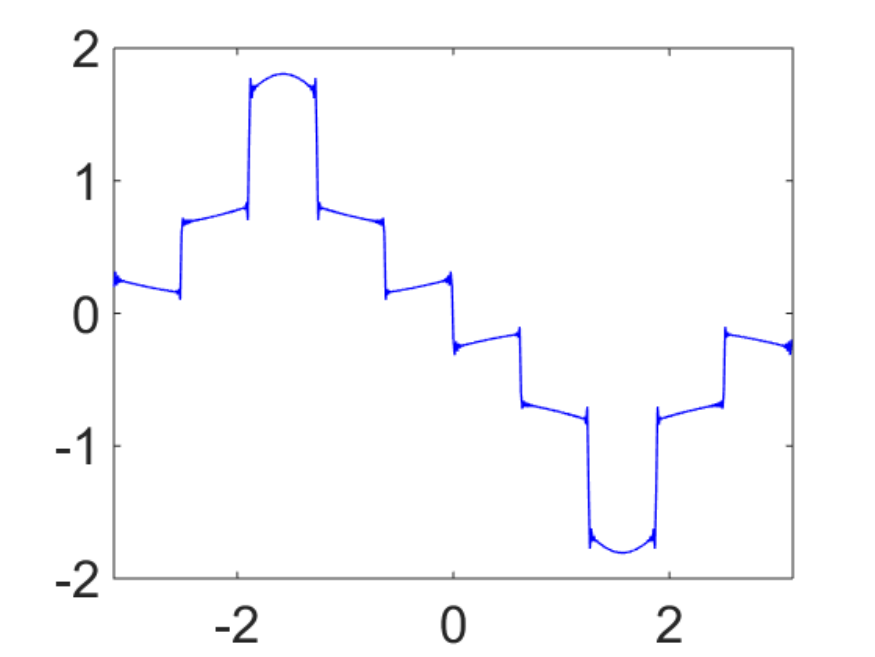}
\quad
 \includegraphics[width=0.3\textwidth]{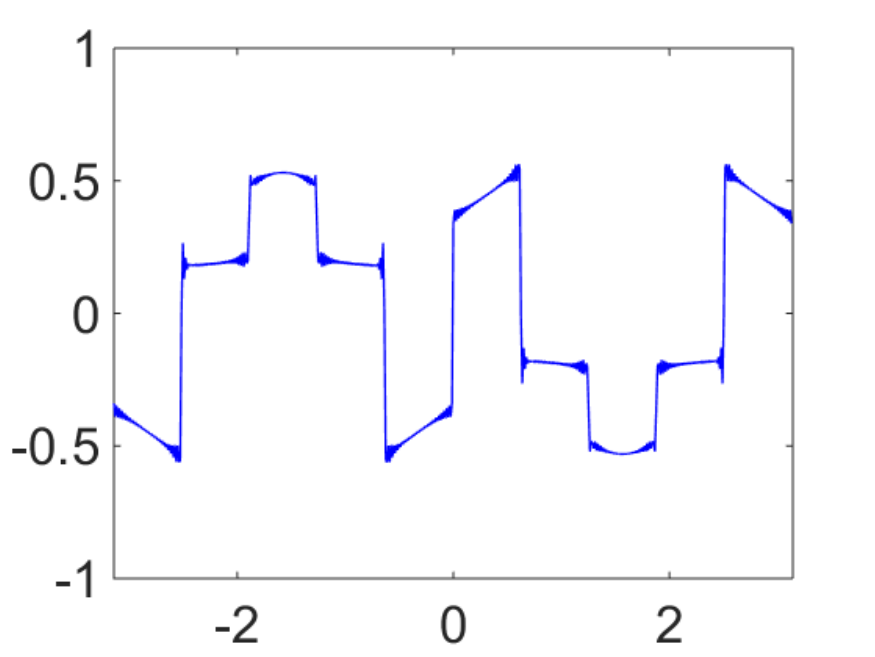}
\quad
 \includegraphics[width=0.3\textwidth]{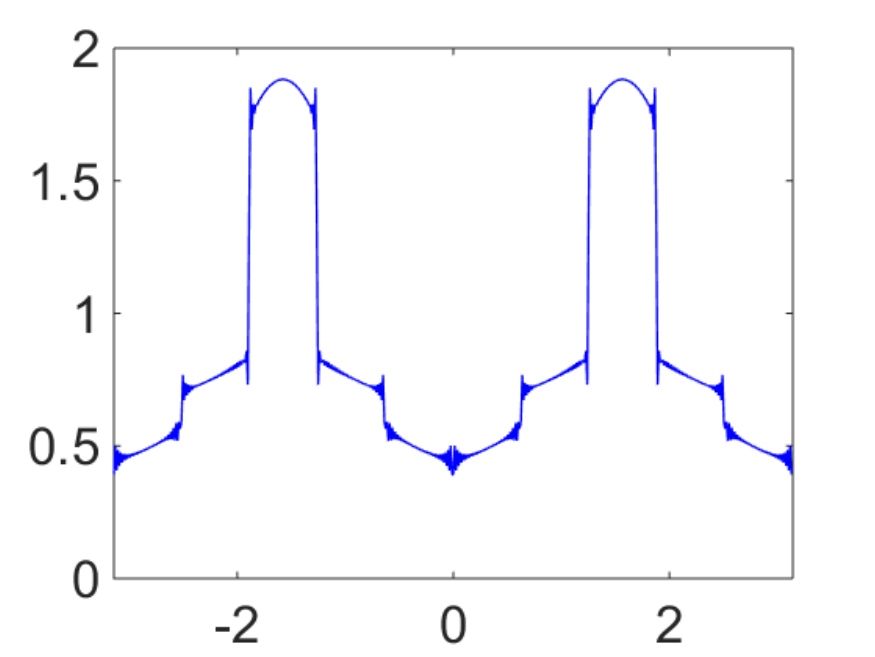}

 \caption{The solution to the periodic initial-boundary value problem for the NLS equation with the initial data $g(x)=\sigma_1(x)$ at rational time $\pi/10$.}
 \label{ibv-nls}
 \end{figure}

 \begin{figure}[ht]
 \centering

 \includegraphics[width=0.3\textwidth]{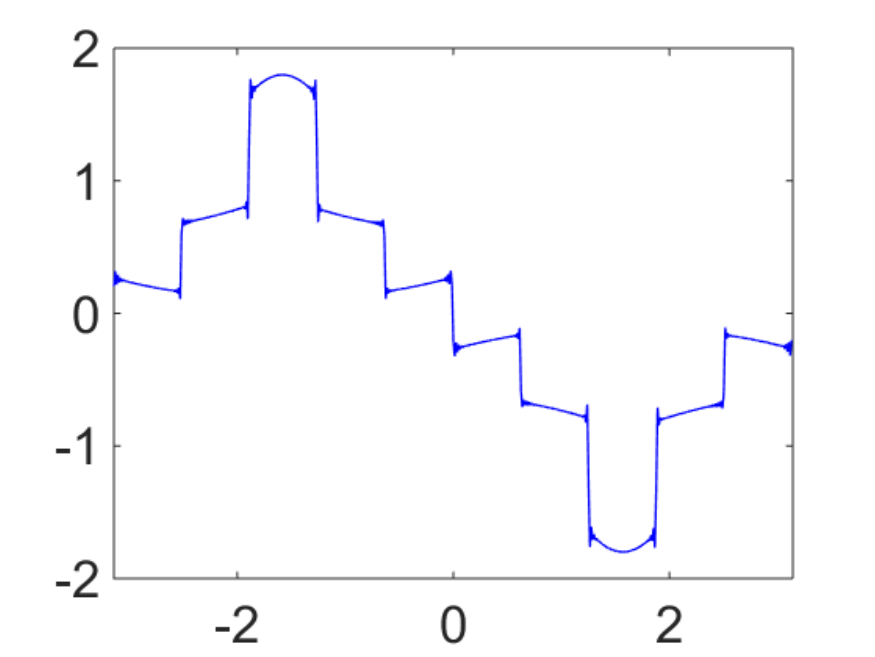}
\quad
 \includegraphics[width=0.3\textwidth]{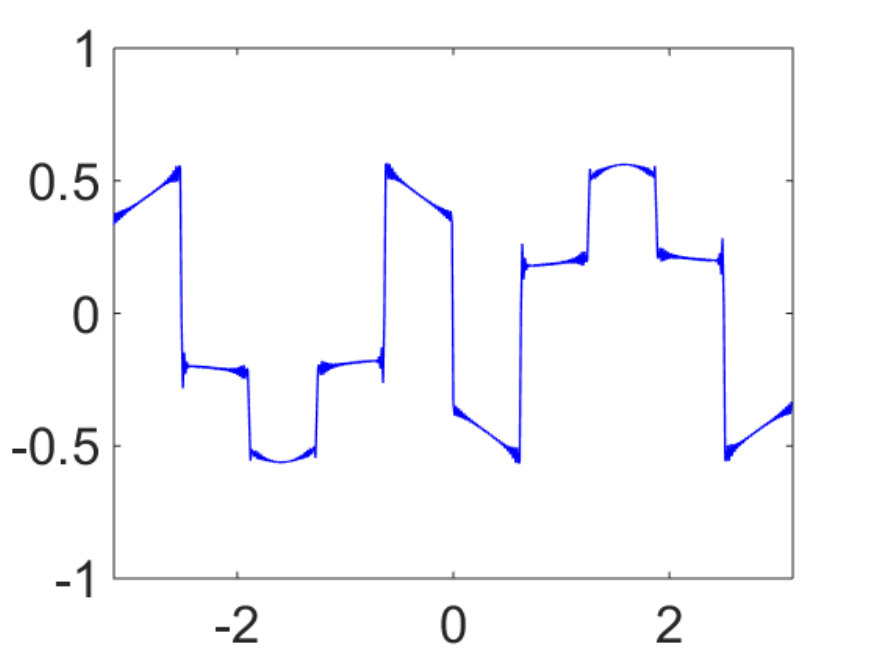}
\quad
 \includegraphics[width=0.3\textwidth]{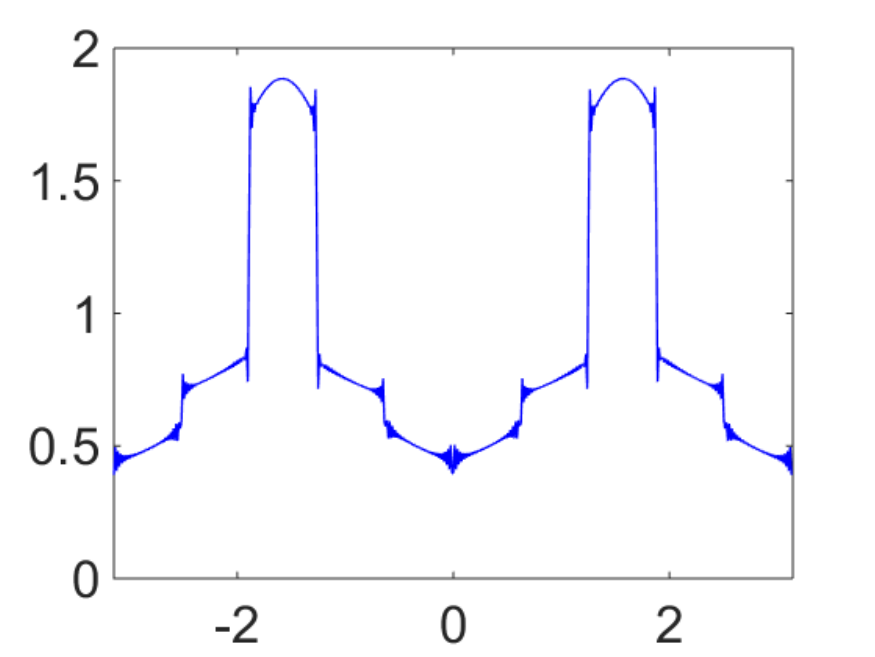}

(a) $u(t, x)$

\medskip

 \includegraphics[width=0.3\textwidth]{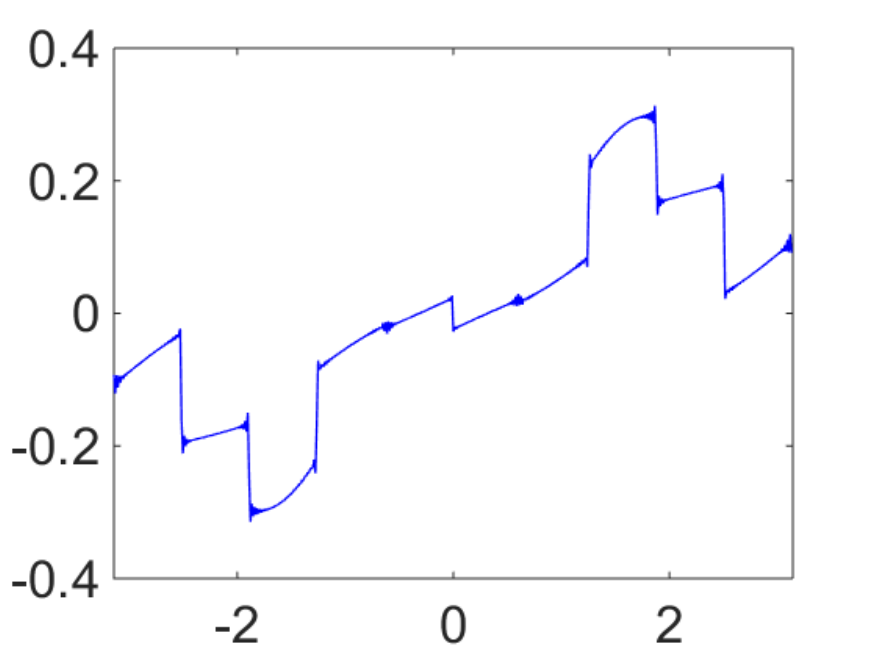}
\quad
 \includegraphics[width=0.3\textwidth]{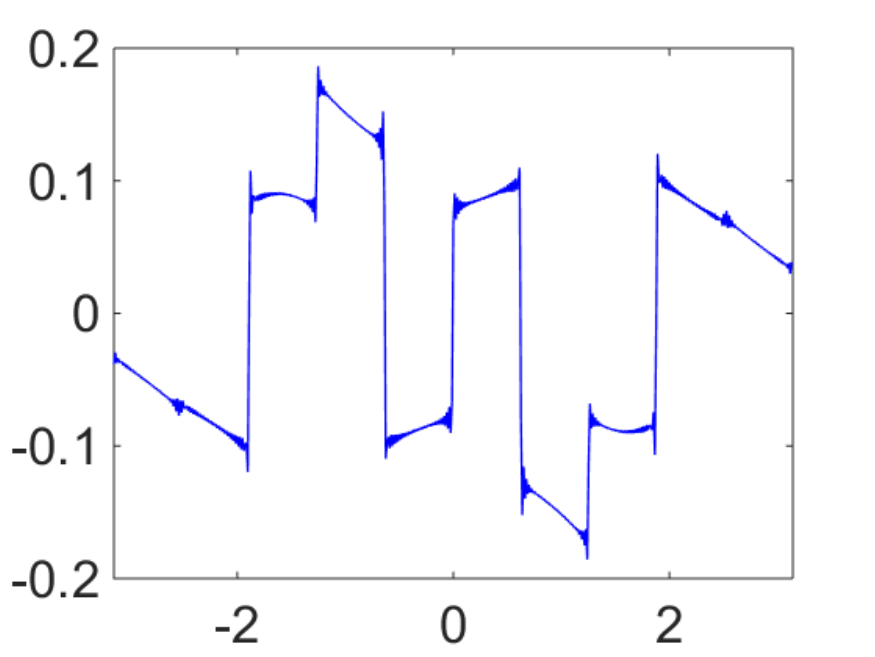}
\quad
 \includegraphics[width=0.3\textwidth]{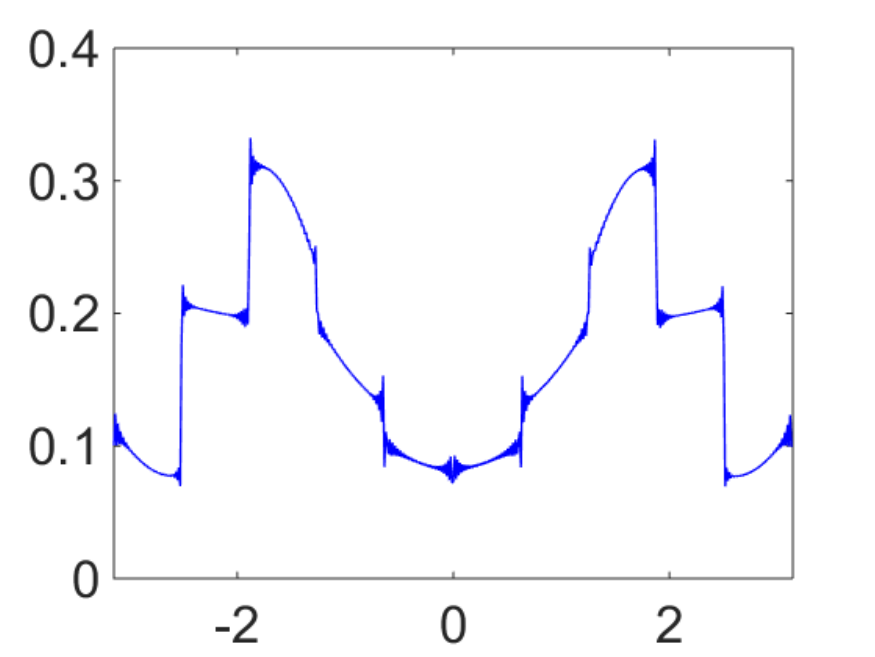}

(b) $v(t, x)$

 \caption{The solutions to the periodic initial-boundary value problem~\eqref{ibv-mana} for the Manakov system with the initial data $f(x)=\sigma_1(x)$, $g(x)=x/10$, $x\in[-\pi, \pi]$ at rational time $\pi/10$.} \label{ibv-mana-us-vsm}
 \end{figure}

Specifically, the Fourier transform for the Manakov system~\eqref{ibv-mana} takes the form
\begin{gather}
\widehat{u}_t=-\i k^2\widehat{u}+\i\mathcal{F}\bigl[\big(\big[\mathcal{F}^{-1}[\widehat{u}]\big]^2+\big[\mathcal{F}^{-1}[\widehat{v}]\big]^2\big)\mathcal{F}^{-1}[\widehat{u}]\bigr], \nonumber\\
\widehat{v}_t=-\i k^2\widehat{v}+\i\mathcal{F}\bigl[\big(\big[\mathcal{F}^{-1}[\widehat{u}]\big]^2+\big[\mathcal{F}^{-1}[\widehat{v}]\big]^2\big)\mathcal{F}^{-1}[\widehat{v}]\bigr], \nonumber\\
\widehat{u}(0, k)=\widehat{f}(k),\qquad \widehat{v}(0, k)=\widehat{g}(k).\label{ibv-mana-ft}
\end{gather}

Using the classic fourth-order Runge--Kutta method to solve the resulting system~\eqref{ibv-mana-ft}, and then taking the inverse discrete Fourier transform, one can obtain the numerical solution to the periodic initial-boundary value problem for the Manakov system~\eqref{ibv-mana}.

\subsection{Numerical results} Figures \ref{ibv-mana-u-11}--\ref{ibv-mana-v-12} display the results of our numerical integration of the periodic initial-boundary value problem~\eqref{ibv-mana} for the Manakov system at some representative rational and irrational times. In~Figures~\ref{ibv-mana-u-11}--\ref{ibv-mana-v-12}, each row displays the real part, the complex part, and the norm of the solutions $u(t, x)$ and $v(t, x)$ at the indicated times, respectively. It was found from these numerical results that the dichotomy effect of dispersive fractalization and quantization in linearization still persist into the nonlinear multi-component regime, whose nonlinear terms involves the coupling of different components. As illustrated in these figures, the evolution of the step function initial profile, ``fractalizes'' into continuous, but nowhere differentiable fractal profiles at irrational times, which has been rigorously confirmed in Theorem \ref{thm-mana}. When it comes to the rational time $t=\pi/10$, the initial step function still ``quantizes'' into a finite number of jump discontinuities. Moreover, between times $t=0.31$ and $t=0.314$, the sudden appearance of a noticeable quantization effect, albeit still modulated by fractal behavior, is striking.\looseness=1

Recall that the numerical experiments to the periodic initial-boundary value problem for the KdV equation and the NLS equation have been previously analyzed in \cite{CO14}, which show that, in the scalar regime, the effect of the nonlinear flow can be regarded as a perturbation of the linearized flow. With the aim to make better comparisons with the multi-component Manakov system, we repeated the numerical simulation of the solutions to the periodic initial-boundary value problem for the NLS equation, with the step function $\sigma_1(x)$ as initial data; see Figure~\ref{ibv-nls}. The comparison of the results of Figures \ref{ibv-mana-u-11}--\ref{ibv-mana-v-12} and~\ref{ibv-nls} strongly indicates that the nonlinearity involving the coupling effects of different components will bring in some new complexities. First of all, we find the interrelationship between different components will bring more ``curved'' manifestation between the jump discontinuites. Next, referring to Figures~\ref{ibv-mana-u-11} and~\ref{ibv-mana-v-11}, if the components $u$ and $v$ begin with the same initial data, they will take on similar profile all along. However, as illustrated in Figures \ref{ibv-mana-u-12} and \ref{ibv-mana-v-12}, if two components start from different initial step functions, which differ in initial height, they will evolve differently in the height of the curves between jump discontinuities. Furthermore, in order to better understand the interrelationship between different components, we perform further numerical experiments for the case that one initial data is a step function, and the other is a smooth function. For instance, if we take the initial $f(x)=\sigma_1(x)$, while $g(x)=x/10$, $x\in[-\pi,\pi]$. Figure \ref{ibv-mana-us-vsm} suggests that the finite number of jump discontinuities still arise at the rational time $\pi/10$ in the component of $v$. The break down of the smoothness is entirely due to the coupling effect of another component.
Motivated by these observations, formulation of theorems and rigorous proofs concerning the qualitative behaviors of the solutions at rational times in the nonlinear multi-component regime, specially for the Manakov system, is eminently worth further study.

\subsection*{Acknowledgements}

The authors would like to thank the referees for their valuable suggestions and comments. Yin's research was supported by Northwest University Youbo Funds 2024006. Kang's research was supported by NSFC under Grant 12371252 and Basic Science Program of Shaanxi Province (Grant-2019JC-28). Liu's research was supported by NSFC under Grant 12271424. Qu's research was supported by NSFC under Grant 11971251 and Grant 11631007.

\pdfbookmark[1]{References}{ref}
\LastPageEnding

\end{document}